
\documentclass[11pt,a4paper]{article}
\usepackage{amssymb,amsmath}

\setlength{\oddsidemargin}{-.2cm}
\setlength{\evensidemargin}{-.6cm}
\setlength{\topmargin}{-1cm}
\setlength{\textheight}{23cm}
\setlength{\textwidth}{460pt}

\newtheorem{theorem}{Theorem}[section]
\newtheorem{corollary}[theorem]{Corollary}
\newtheorem{definition}[theorem]{Definition}
\newtheorem{lemma}[theorem]{Lemma}
\newtheorem{proposition}[theorem]{Proposition}
\newtheorem{remark}[theorem]{Remark}

\newtheorem{question}[theorem]{Question}
\newtheorem{example}[theorem]{Example}

\newenvironment{proof}{\begin{trivlist}\item[]{\it
Proof.}}{\hfill$\square$\end{trivlist}}

\newenvironment{proofofthm}[1]{\noindent{\it Proof of Theorem 
#1}}{\hfill$\square$\\\mbox{}}

\def\mc{{\mathbb{C}}}
\def\pol{{\mathrm{Pol}}}
\def\sltwo{{\mathrm{SL}}_2({\mathbb{C}})} 
\def\gltwo{{\mathrm{GL}}_2({\mathbb{C}})} 
\def\mz{{\mathbb{Z}}}
\def\mr{{\mathbb{R}}}
\def\mn{{\mathbb{N}}}
\def\limzr{{\mathrm{lim}}_{z\to 0}\rho(z)}
\def\onepsg{1{\mathrm{-PS}}}
\def\mds{{\mathrm{mds}}}


\begin{document}
\title{Helly dimension of algebraic groups} 
\author{M\'aty\'as Domokos
and Endre Szab\'o  
\thanks{Both authors are partially supported by 
OTKA NK72523 and K61116. }
\\ 
\\
{\small R\'enyi Institute of Mathematics, Hungarian Academy of 
Sciences,} 
\\ {\small P.O. Box 127, 1364 Budapest, Hungary,} 
}
\date{}
\maketitle 
\begin{abstract} It is shown that for a linear algebraic group $G$ over a field of characteristic zero, there is a natural number $\kappa(G)$ such that if 
a system of Zariski closed cosets in $G$ has empty intersection, then there is 
a subsystem consisting of at most $\kappa(G)$ cosets with empty intersection. 
This is applied to the study of algebraic group actions on product varieties. 
\end{abstract}

\medskip
\noindent MSC: 13A50, 14L30, 20G05

\section{Introduction}\label{sec:intro} 

Recall the following result on the invariant theory of binary forms from \cite{d:sep}: 
If two points $(v_1,\ldots,v_m,w)$ and $(v_1',\ldots,v_m',w')$ in 
$V^m\oplus W$ can be separated by polynomial $\sltwo$-invariants 
(here $W,V$ are finite dimensional polynomial $\sltwo$-modules, 
and $V$ is irreducible), then there are at most seven indices 
$\{i_1,\ldots,i_d\}$ $(d\leq 7)$ such that $(v_{i_1},\ldots,v_{i_d},w)$ 
and  $(v_{i_1}',\ldots,v_{i_d}',w')$ can be separated by polynomial 
$\sltwo$-invariants on $V^d\oplus W$. 
The present paper explains why a finite bound (seven) independent from $V,W$ exists here. 
In particular, we obtain a far reaching generalization of this fact. 

Roughly speaking, it turns out that the above mentioned finite bound has two components: 
one is the finiteness of the Helly dimension of the algebraic group 
$\sltwo$, the other is finiteness of a number $\delta(\sltwo)$ defined in terms of closed orbits in products of $\sltwo$-varieties. 

The Helly dimension $\kappa(G)$ of 
a finite group $G$ was introduced in \cite{d:sep} as the minimal natural number $d$ having the following property: if a system of cosets in $G$ has no common element, then one can find $\leq d$ cosets in the system with empty intersection. (The name "Helly dimension" was chosen because of the analogy 
with a theorem of Helly in convex geometry, see \cite{steinitz}.) 
We extend this definition to an arbitrary topological group  by restricting to closed subgroups (hence closed cosets): 

\begin{definition}\label{def:hellydim} Given a topological group $G$, its {\it Helly dimension} 
$\kappa(G)$ is the minimal natural number $d$ such that any finite system of closed cosets in $G$ with empty intersection has a subsystem consisting of at most $d$ cosets with empty intersection, and setting $\kappa(G)=\infty$ if there is no natural number $d$ with the above property. 
\end{definition} 

(Our restriction to only consider finite systems of cosets plays an essential role in this definition, see 
Example~\ref{example:infinitecosets}.)   
In the present paper we mainly work in the context of algebraic groups. 
An algebraic group $G$ is viewed as a topological group with the Zariski topology, and its Helly dimension  $\kappa(G)$ is understood accordingly. 
One of our main results is that if $G$ is a linear algebraic group over an algebraically closed field of characteristic zero, then $\kappa(G)$ is finite 
(see Theorem~\ref{thm:alggphelly}). 
In Section~\ref{sec:abel} we determine the Helly dimension of finite abelian groups. The finiteness of $\kappa(G)$ is deduced from this with the aid of 
Platonov's Lemma and Jordan's Theorem on finite linear groups. 

We have only partial results about the finiteness of the quantity $\delta(G)$ (cf. Definition~\ref{def:delta}): namely, we determine $\delta(G)$ when $G$ is a torus, or $\sltwo$, or $\gltwo$ (see Proposition~\ref{prop:torus}, Theorems~\ref{thm:sl2}, \ref{thm:gl2}). 
We mention that the proof for tori reduces to a Carath\'eodory type theorem, another fundamental result from convex geometry. 

Whereas the main theme of the present paper is to bound the number of variables necessarily involved in a separating system of invariants in terms of the group, 
\cite{d:sep} contains general results on similar bounds in terms of the dimension of the irreducible components of the representation. 
In Section~\ref{sec:separating} we make sharp a general result from 
\cite{d:sep} in this direction. 

The naturality of the concept of the algebraic Helly dimension is underlined 
in Section~\ref{sec:rational} by some applications to rational invariants of algebraic groups. In particular, as a corollary of the finiteness of $\kappa(G)$ in characteristic zero, we obtain a rather elementary and general statement on fields of rational invariants (see Theorem~\ref{thm:rational}), that has no analogues in positive characteristic. 

Finally, in Section~\ref{sec:lie} we point out that 
compact real Lie groups have finite Helly  dimension. 


\section{Abelian groups}\label{sec:abel}

\begin{lemma}\label{lemma:AxB} 
Let $A,B$ be finite abelian groups of coprime order. 
Then 
\[\kappa(A\times B)=\max\{\kappa(A),\kappa(B)\}.\] 
\end{lemma}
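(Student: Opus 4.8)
The plan is to reduce the statement to an elementary structural fact that holds precisely because $\gcd(|A|,|B|)=1$: every subgroup of $A\times B$ is a direct product of subgroups of the two factors. First I would prove this. For a subgroup $H\le A\times B$ one always has $H\subseteq\pi_A(H)\times\pi_B(H)$, where $\pi_A,\pi_B$ denote the two projections. Now $|\pi_A(H)|$ and $|\pi_B(H)|$ both divide $|H|$ (they are images of $H$ under homomorphisms), and they are coprime (they divide $|A|$ and $|B|$ respectively), so their product divides $|H|$; comparing with $|H|\le|\pi_A(H)|\,|\pi_B(H)|$ forces $H=\pi_A(H)\times\pi_B(H)$. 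Consequently every coset $gH$ in $A\times B$, with $g=(g_A,g_B)$, equals $\bigl(g_A\pi_A(H)\bigr)\times\bigl(g_B\pi_B(H)\bigr)$, i.e.\ a product $C\times D$ of a coset $C$ in $A$ and a coset $D$ in $B$. (All cosets here are automatically Zariski closed, so this applies to the cosets occurring in Definition~\ref{def:hellydim}.)

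Granting this, I would prove $\kappa(A\times B)\le\max\{\kappa(A),\kappa(B)\}$ as follows. Take a finite system of cosets in $A\times B$ with empty intersection and write them as $C_i\times D_i$ for $i\in I$. Since $\bigcap_{i\in I}(C_i\times D_i)=\bigl(\bigcap_{i\in I}C_i\bigr)\times\bigl(\bigcap_{i\in I}D_i\bigr)$, at least one of $\bigcap_{i\in I}C_i$ and $\bigcap_{i\in I}D_i$ is already empty; say $\bigcap_{i\in I}C_i=\emptyset$. By the definition of $\kappa(A)$ there is a subset $J\subseteq I$ with $|J|\le\kappa(A)$ and $\bigcap_{i\in J}C_i=\emptyset$, and then $\bigcap_{i\in J}(C_i\times D_i)=\emptyset$ as well. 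Hence an empty-intersection subsystem of size at most $\max\{\kappa(A),\kappa(B)\}$ always exists.

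For the reverse inequality I would use that $A$ is a direct factor of $A\times B$. Given cosets $C_i$ ($i\in I$) in $A$ with $\bigcap_{i\in I}C_i=\emptyset$, the cosets $C_i\times B$ in $A\times B$ satisfy $\bigcap_{i\in J}(C_i\times B)=\bigl(\bigcap_{i\in J}C_i\bigr)\times B$ for every $J\subseteq I$, which is empty if and only if $\bigcap_{i\in J}C_i=\emptyset$. Thus the minimal size of an empty-intersection subsystem is the same for $\{C_i\}_{i\in I}$ and for $\{C_i\times B\}_{i\in I}$, which gives $\kappa(A\times B)\ge\kappa(A)$; by symmetry $\kappa(A\times B)\ge\kappa(B)$, and combining with the previous paragraph yields the asserted equality. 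The only place the coprimality hypothesis is used is the box decomposition in the first step; everything else is the routine identity ``intersection of boxes $=$ box of intersections'', and the degenerate cases in which $A$ or $B$ is trivial are covered by the same argument, so I do not anticipate a genuine obstacle.
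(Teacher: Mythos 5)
Your proof is correct and follows essentially the same route as the paper: both arguments hinge on the fact that coprimality forces every subgroup (hence every coset) of $A\times B$ to decompose as a product $C\times D$, and your upper-bound argument is just the contrapositive of the paper's (you pass from an empty global intersection to an empty factor intersection, while the paper passes from nonempty $d$-fold intersections in each factor to a common point $(a,b)$). The only difference is that you supply the order-counting justification of the box decomposition, which the paper merely asserts.
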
 

\begin{proof} The inequality $\geq$ is obvious, since $A$ and $B$ are subgroups of $A\times B$. 
For the reverse inquality, note that the assumption on the orders of $A,B$ 
implies that any subgroup in $A\times B$ is of the form $A_0\times B_0$ 
with $A_0\leq A$, $B_0\leq B$. 
Take cosets $(a_i,b_i)\cdot A_i\times B_i$ $(i=1,\ldots,m)$ in $A\times B$, 
and assume that any $d$ of them have a non-empty intersection, where 
$d=\max\{\kappa(A),\kappa(B)\}$. 
It follows that any $d$ of the cosets $a_iA_i$ have a non-empty intersection, 
implying (by the choice of $d$) that there is a common element $a\in A$ of all these cosets. 
Similarly there is a common element $b\in B$ in the intersection of the cosets 
$b_iB_i$. Clearly, $(a,b)$ is contained in the intersection of the given $m$ cosets in $A\times B$. 
\end{proof} 

\begin{lemma}\label{lemma:p-group} 
Let $p$ be a prime, and $P$ a finite abelian $p$-group minimally generated by $d$ elements. Then $\kappa(P)=d+1$. 
\end{lemma}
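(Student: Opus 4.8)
The plan is to prove the inequalities $\kappa(P)\ge d+1$ and $\kappa(P)\le d+1$ separately, where $d=\dim_{\mathbb{F}_p}P/pP$ is the minimal number of generators of $P$. I will use repeatedly that this quantity equals $\dim_{\mathbb{F}_p}A[p]$ for a finite abelian $p$-group $A$ (with $A[p]=\{x\in A:px=0\}$), hence does not increase under passing to a subgroup, and that it does not increase under passing to a quotient either (generators map onto generators). For the lower bound I exhibit an explicit system: writing $P=\prod_{i=1}^d\mathbb{Z}/p^{a_i}$ with coordinates $x=(x_1,\dots,x_d)$, put $H_i=\{x:x_i=0\}$ for $1\le i\le d$, let $H_{d+1}$ be the kernel of the surjection $\phi\colon P\to\mathbb{Z}/p$, $\phi(x)=\sum_i(x_i\bmod p)$, and set $C_i=H_i$ for $i\le d$ and $C_{d+1}=\phi^{-1}(1)$. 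Then $\bigcap_{i=1}^{d+1}C_i=\emptyset$ because $x_1=\dots=x_d=0$ forces $\phi(x)=0$, whereas deleting $C_j$ for $j\le d$ leaves the point with $j$-th coordinate $1$ and all others $0$, and deleting $C_{d+1}$ leaves the origin. Thus no proper subsystem has empty intersection, so $\kappa(P)\ge d+1$. (This is the pull-back along $P\to P/pP\cong\mathbb{F}_p^d$ of $d$ coordinate hyperplanes together with one translate of their diagonal hyperplane; a surjection preserves emptiness and non-emptiness of intersections of cosets.)

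For the upper bound I induct on $|P|$ and prove that every system $C_1,\dots,C_m$ of closed cosets of $P$ with empty intersection that is \emph{minimal}, in the sense that any $m-1$ of the cosets have nonempty intersection, satisfies $m\le d+1$; this suffices because any finite system with empty intersection contains a minimal one. Write $C_i=g_i+H_i$. Minimality forces $m\ge2$ and $H_i\ne P$ for all $i$ (a coset equal to $P$ could be deleted). If $N:=\bigcap_iH_i\ne0$, then the $C_i$ descend to cosets of $P/N$ forming again a minimal system with empty intersection; since $|P/N|<|P|$ and $P/N$ needs at most $d$ generators, the inductive hypothesis gives $m\le d+1$. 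So assume $\bigcap_iH_i=0$. If $m\le2$ there is nothing to prove (as $d\ge1$ when $P\ne0$). If $m\ge3$, then no two $C_i$ are disjoint, since a disjoint pair would be a smaller system with empty intersection; hence all pairwise intersections $C_i\cap C_j$ are nonempty.

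The crux is the remaining case, handled by reducing to a \emph{proper} subgroup of \emph{strictly smaller rank}. Since $\bigcap_iH_i=0$ but $P[p]\ne0$, the subgroup $P[p]$ is not contained in all the $H_i$; fix $k$ with $P[p]\not\subseteq H_k$. Then $H_k\cap P[p]$ is a proper subspace of $P[p]$, so the minimal number of generators of $H_k$, namely $\dim_{\mathbb{F}_p}(H_k\cap P[p])$, is at most $d-1$. Translating the whole system by $-g_k$ we may assume $C_k=H_k$ is a subgroup. For $i\ne k$ the sets $C_i\cap H_k$ are nonempty cosets of subgroups of the group $H_k$; their total intersection equals $\bigcap_iC_i=\emptyset$, while deleting one of them, $C_j\cap H_k$ say, leaves $\bigcap_{i\ne j}C_i\ne\emptyset$ by minimality. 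Hence $\{C_i\cap H_k:i\ne k\}$ is a minimal system of cosets with empty intersection in the group $H_k$, of size $m-1$, so $m-1\le\kappa(H_k)$; and since $H_k$ is proper, $|H_k|<|P|$, so the inductive hypothesis gives $\kappa(H_k)\le(d-1)+1=d$. Therefore $m\le d+1$, completing the induction.

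I expect this last step to be the main obstacle. Restricting to an \emph{arbitrary} $C_k$ only yields $m-1\le\kappa(H_k)\le d+1$, i.e.\ the useless bound $m\le d+2$; the sharp bound is recovered only because first clearing out $\bigcap_iH_i$ forces some $H_k$ to have rank $<d$, which in turn relies on the monotonicity of the minimal number of generators under passing to subgroups (seen through the $p$-torsion). The remaining ingredients are routine: that passing to the quotient $P/N$, respectively intersecting everything with the subgroup $H_k$, carries a minimal system with empty intersection to a minimal system with empty intersection — surjections, respectively inclusions, preserving emptiness and non-emptiness of the relevant finite intersections of cosets — and that Helly dimension and minimality are unaffected by translating an entire system of cosets by a fixed element of $P$.
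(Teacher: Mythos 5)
Your proof is correct. It shares the paper's overall skeleton: induction on $|P|$, the identification of the minimal number of generators with $\dim_{\mathbb{F}_p}P[p]$ (the paper's ${\mathrm{soc}}(P)$), reduction modulo the common subgroup $\bigcap_i H_i$ when it is nontrivial, and essentially the same lower-bound configuration of $d$ coordinate hyperplanes plus one affine hyperplane (the paper realizes it inside ${\mathrm{soc}}(P)$, you pull it back from $P/pP$; both work). Where you genuinely diverge is the crux case $\bigcap_i H_i=0$. The paper greedily selects $e\le d$ indices whose subgroups already have trivial socle-intersection, hence trivial intersection, so that $C_1\cap\cdots\cap C_e$ is empty or a single point, and then appends one further coset avoiding that point; this settles the case directly, with no appeal to the inductive hypothesis. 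You instead locate one $H_k$ with $P[p]\not\subseteq H_k$, hence of $p$-rank at most $d-1$, translate so that $C_k=H_k$, and restrict the remaining cosets to $H_k$, applying the inductive hypothesis to this proper subgroup to get $m-1\le\kappa(H_k)\le d$. Both arguments turn on the same monotonicity of the socle under intersection; yours trades the paper's explicit selection for a second use of induction (on a subgroup rather than a quotient), at the cost of needing the minimal-system framing and the observation that pairwise intersections are nonempty so that the restricted sets $C_i\cap H_k$ are genuine nonempty cosets in $H_k$. Both routes are sound, and your write-up of the subgroup-restriction step is careful enough to be complete.
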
 

\begin{proof} 
For a subgroup $Q$ in $P$, set ${\mathrm{soc}}(Q)=\{g\in Q\mid g^p=1\}$. 
Clearly ${\mathrm{soc}}(Q)$ is an elementary abelian $p$-group, whence can be viewed as a vector space of dimension $s$ over the field $\mathbb{F}_p$ of $p$ elements, where $s$ is the minimal number of generators of $Q$. In particlular, ${\mathrm{soc}}(P)\cong \mathbb{F}_p^d$. 
Clearly ${\mathrm{soc}}(Q)={\mathrm{soc}}(P)\cap Q$, and if ${\mathrm{soc}}(Q)$ is trivial, then $Q$ is the trivial subgroup. 
Note also that if $Q_1,Q_2$ are subgroups of $P$, and $a_1,a_2\in P$, then the intesection of the cosets $a_1Q_1$ and $a_2Q_2$ is either empty or a coset with respect to $Q_1\cap Q_2$. 

Let $C_1,\ldots,C_m$ ($m\geq d+1$) be cosets in $P$ with empty intersection, $C_i=a_iP_i$, where 
$a_i\in P$ and $P_i$ is a subgroup of $P$. We may assume that $P_i\neq P$. 
After a possible renumbering of the cosets, we may assume that  for some $e\leq d$ we have 
$$\dim_{\mathbb{F}_p}(\cap_{i=1}^e{\mathrm{soc}}{P_i})=\dim_{\mathbb{F}_p}(\cap_{i=1}^m{\mathrm{soc}}{P_i}). $$ 
If $\cap_{i=1}^e{\mathrm{soc}}{P_i}=\{0\}$, then $\cap_{i=1}^eP_i=\{0\}$, hence 
either $\cap_{i=1}^eC_i$ is empty and we are done, or 
$\cap_{i=1}^eC_i$ consists of a single element $a$. In the latter case there is a $j>e$ such that 
$a\notin C_j$, so the $e+1$ cosets $C_1,C_2,\ldots,C_e,C_j$ have an empty intersection. 

If $S:=\cap_{i=1}^e{\mathrm{soc}}{P_i}\neq\{0\}$, then $S\leq P_j$ for all $j=1,\ldots,m$. Consider the natural surjection $\eta:P\to P/S$, and denote by $\overline{C_i}$ the image of $C_i$. We have that 
$C_i=\eta^{-1}(\eta(C_i))$, implying that $\cap_{i=1}^m\overline{C_i}=\emptyset$. 
By induction on the size of $P$ we may assume that $\kappa(P/S)\leq d$, hence there are $d+1$ cosets, say $\overline{C_1},\ldots,\overline{C_{d+1}}$ with empty intersection. Then $\cap_{i=1}^{d+1}C_i=\emptyset$. 

To show the reverse inequality $\kappa(P)\geq d+1$, note that 
$L_i=\{(x_1,\ldots,x_d)\in\mathbb{F}_p^d\mid x_i=0\}$, $i=1,\ldots,d$, 
and $L_0=\{(x_1,\ldots,x_d)\mid \sum x_i=1\}$ are $d+1$ cosets in 
$\mathbb{F}_p^d={\mathrm{soc}}(P)$, whose intersection is empty, but the intersection of 
any $d$ of them is non-empty. 
\end{proof} 
 
\begin{corollary}\label{cor:abelkappa} Let $A$ be a finite abelian group minimally generated by $d$ elements. 
Then $\kappa(A)=d+1$. 
\end{corollary}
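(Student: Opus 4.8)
The plan is to reduce to the primary decomposition of $A$ and then quote the two preceding lemmas. First I would write $A=\prod_p A_p$ as the internal direct product of its Sylow subgroups, the product running over the primes $p$ dividing $|A|$. Distinct Sylow subgroups have coprime order, so Lemma~\ref{lemma:AxB}, applied inductively so as to split off one prime factor at a time, gives $\kappa(A)=\max_p\kappa(A_p)$.

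Next, for each $p$ let $d_p$ be the minimal number of generators of the abelian $p$-group $A_p$. By Lemma~\ref{lemma:p-group} we have $\kappa(A_p)=d_p+1$, hence $\kappa(A)=\max_p(d_p+1)=(\max_p d_p)+1$. So the statement amounts to the claim $\max_p d_p=d$, i.e.\ that the minimal number of generators of a finite abelian group is determined primewise. I would verify this via the invariant factor decomposition $A\cong\bigoplus_{i=1}^d\mz/n_i\mz$ with $n_1\mid n_2\mid\cdots\mid n_d$: here $d$ is exactly the number of invariant factors, while $d_p$ equals the number of $n_i$ divisible by $p$ (equivalently $\dim_{\mathbb{F}_p}(A/pA)$, using that multiplication by $p$ is an automorphism of $A_q$ for $q\neq p$). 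This number is $d$ for every prime $p$ dividing $n_1$, and at most $d$ for all $p$, so $\max_p d_p=d$. Combining the three displayed equalities gives $\kappa(A)=d+1$.

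The only real content beyond citing Lemmas~\ref{lemma:AxB} and~\ref{lemma:p-group} is this last bookkeeping step identifying $\max_p d_p$ with $d$; I expect that to be the (minor) crux, and everything else is routine.
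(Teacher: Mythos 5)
Your proposal is correct and follows essentially the same route as the paper: decompose $A$ into its Sylow subgroups, apply Lemma~\ref{lemma:AxB} (iterated) and Lemma~\ref{lemma:p-group}, and observe that the maximum of the primewise generator counts equals $d$. The paper attributes that last bookkeeping step to the structure theorem without spelling it out, whereas you verify it via the invariant factor decomposition; this is only a difference in the level of detail, not in the argument.
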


\begin{proof} The structure theorem of finite abelian groups implies that if 
$p_1,\ldots,p_r$ are the distinct prime divisors of $|A|$, then 
$A\cong P_1\times \cdots\times P_r$, where $P_i$ is a $p_i$-group generated by 
$\leq d$ elements for $i=1,\ldots,r$, 
and some $P_i$ can not be generated by $d-1$ elements. 
Therefore by Lemma~\ref{lemma:p-group} we have $\kappa(P_i)\leq d+1$  
for $i=1,\ldots,r$, with equality for some $i$, whence 
$\kappa(A)=d+1$ by Lemma~\ref{lemma:AxB}. 
\end{proof} 

We shall make use of the following general lemma: 

\begin{lemma}\label{lemma:extension} 
Let  $H$ be a normal subgroup in the finite group $G$. 
Then 
$$\kappa(G)\le\kappa(H)\cdot \kappa(G/H).$$
\end{lemma}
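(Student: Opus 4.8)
The plan is to show that any finite system of closed cosets in $G$ with empty intersection has a subsystem of size at most $\kappa(H)\cdot\kappa(G/H)$ with empty intersection. Let $g_1C_1,\ldots,g_mC_m$ be cosets (with $C_i\leq G$) whose intersection is empty, and suppose for contradiction that every subsystem of size at most $\kappa(H)\cdot\kappa(G/H)$ has nonempty intersection. The first step is to pass to the quotient: consider the images $\overline{g_iC_i}$ in $G/H$ under the canonical surjection $\pi\colon G\to G/H$. These are cosets of the subgroups $\pi(C_i)=C_iH/H$ in $G/H$. I would like to conclude that $\bigcap_i \overline{g_iC_i}\neq\emptyset$, but that need not hold directly, so instead I work fiber by fiber over $G/H$.

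Here is the key combinatorial move. For each subset $T\subseteq\{1,\ldots,m\}$, let $I(T)=\bigcap_{i\in T} g_iC_i$. The first observation is that $\pi^{-1}(\pi(g_iC_i)) = g_iC_iH$, and $g_iC_i\cap g_jC_j$ is either empty or a single coset of $C_i\cap C_j$; more generally $I(T)$, when nonempty, is a coset of $\bigcap_{i\in T}C_i$, hence lies in a single fiber of $\pi$ (a single coset of $H$) when... no — $\bigcap C_i$ may not lie in $H$. So let me instead argue as follows. Among all subsystems $T$ with $I(T)\neq\emptyset$ and $|T|$ at most $\kappa(G/H)$, choose one, call it $T_0$, for which the subgroup $\pi(\bigcap_{i\in T_0}C_i)$... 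Actually the cleanest route: first apply $\kappa(G/H)$ in $G/H$ to the cosets $\overline{g_iC_i}$. If their intersection in $G/H$ is empty, we immediately find $\leq\kappa(G/H)\leq\kappa(H)\kappa(G/H)$ of them with empty intersection, hence the same indices give empty intersection upstairs (since $g_iC_i\subseteq\pi^{-1}(\overline{g_iC_i})$), contradiction. So we may assume $\bigcap_i\overline{g_iC_i}\neq\emptyset$. Pick $\overline{x}$ in this intersection and a preimage $x\in G$. Translating by $x^{-1}$ (which preserves the coset structure and the hypotheses), we may assume each $g_iC_i$ meets $H$: indeed $\overline{g_iC_i}$ contains the identity coset, so $g_iC_i\cap H\neq\emptyset$, and $g_iC_i\cap H$ is a coset of $C_i\cap H$ inside $H$. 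Now we have $m$ cosets $(g_iC_i\cap H)$ of subgroups of $H$, living in $H$, with empty intersection (their intersection equals $\bigcap g_iC_i=\emptyset$). By definition of $\kappa(H)$ there are at most $\kappa(H)$ indices with empty intersection — but that already gives the bound $\kappa(H)\leq\kappa(H)\kappa(G/H)$, which looks too strong.

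The gap is that after translating we used a \emph{single} point $\overline x$; different indices may force incompatible translations, so in general $\bigcap\overline{g_iC_i}$ being nonempty does not let us reduce to $H$ for the \emph{whole} system while keeping emptiness. The honest argument: we do not get to assume $\bigcap\overline{g_iC_i}=\emptyset$; when it is nonempty we localize. Concretely, I would do a double induction or a direct counting argument: by $\kappa(G/H)$, if the downstairs intersection is empty we are done with $\leq\kappa(G/H)$ cosets; otherwise fix $\overline x\in\bigcap\overline{g_iC_i}$, translate so $\overline x=\overline 1$, replace each $g_iC_i$ by $D_i:=g_iC_i\cap H$, a coset of $C_i\cap H\leq H$, and note $\bigcap D_i=\emptyset$. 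Apply $\kappa(H)$: there are indices $i_1,\ldots,i_k$ with $k\leq\kappa(H)$ and $D_{i_1}\cap\cdots\cap D_{i_k}=\emptyset$, hence $g_{i_1}C_{i_1}\cap\cdots\cap g_{i_k}C_{i_k}\subseteq H$ and is empty. This gives $\kappa(G)\leq\kappa(H)$ in the case the quotient intersection is nonempty, and $\leq\kappa(G/H)$ otherwise — so in fact $\kappa(G)\leq\max\{\kappa(H),\kappa(G/H)\}$??

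That is clearly not what the lemma claims, so the subtlety I am glossing must be real: translating by a \emph{single} $x$ to make all $\overline{g_iC_i}$ contain $\overline 1$ requires $\overline x\in\bigcap_i\overline{g_iC_i}$, fine, but then $g_iC_i\cap H$ is nonempty only if $\overline x\in\overline{g_iC_i}$ \emph{with} a representative in $g_iC_i$ — which is automatic. Hmm. The resolution must be that $\bigcap_i\overline{g_iC_i}$ can be nonempty yet we cannot pick one $\overline x$ that simultaneously... no, nonempty means exactly such $\overline x$ exists. I suspect the actual proof is an induction on $|G|$ handling the quotient more carefully, using that the $D_i$ depend on the choice of $\overline x$ and one must union over the (boundedly many, by $\kappa(G/H)$) "critical" points. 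So my concrete plan: (1) reduce, via $\kappa(G/H)$, to a subsystem of size $\leq\kappa(G/H)$ whose downstairs images still have empty intersection if possible — done; (2) otherwise, the downstairs images have a common coset; for each minimal bad configuration produced by $\kappa(G/H)$-many indices, localize to the corresponding $H$-fiber and apply $\kappa(H)$, picking up a factor $\kappa(H)$ per such configuration; (3) combine the at most $\kappa(G/H)$ fiberwise conclusions, each of size $\leq\kappa(H)$, into one system of size $\leq\kappa(H)\kappa(G/H)$ with empty intersection. \textbf{The main obstacle} I anticipate is precisely bookkeeping step (2)–(3): making rigorous that intersecting downstairs over a $\kappa(G/H)$-subsystem pins the upstairs intersection into a \emph{single} $H$-coset (this uses that $\bigcap_{i\in T}C_i\cdot H/H$ is the relevant subgroup and that on a minimal empty-intersection configuration one controls which fiber survives), and ensuring the indices chosen in different fibers assemble to a genuine empty-intersection family of the claimed size without double counting or losing emptiness.
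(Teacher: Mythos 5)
There is a genuine gap, and you half-identified it yourself but misdiagnosed its location. The fatal step is the inference ``$D_{i_1}\cap\cdots\cap D_{i_k}=\emptyset$, hence $g_{i_1}C_{i_1}\cap\cdots\cap g_{i_k}C_{i_k}\subseteq H$ and is empty.'' Emptiness of $\bigcap_j\bigl(g_{i_j}C_{i_j}\cap H\bigr)$ only says that $\bigcap_j g_{i_j}C_{i_j}$ is \emph{disjoint from} $H$; it does not say it is empty. An intersection of cosets each of which meets $H$ need not meet $H$: in $G=\mz/2\times\mz/2$ with $H=\{(0,0),(1,0)\}$, the cosets $\{(0,0),(0,1)\}$ and $\{(1,0),(0,1)\}$ each meet $H$, their traces on $H$ are the disjoint singletons $\{(0,0)\}$ and $\{(1,0)\}$, yet their intersection is $\{(0,1)\}\neq\emptyset$. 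So the ``certificate'' of size $\kappa(H)$ you extract is not a valid certificate of empty intersection --- which is exactly why your computation appeared to prove the absurdly strong bound $\max\{\kappa(H),\kappa(G/H)\}$. Your closing ``plan'' (localize to fibers over each minimal bad configuration and recombine) leaves precisely this recombination unproved, and you yourself flag it as the main obstacle; as written the proposal is therefore not a proof.

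The paper's argument sidesteps the difficulty by running the contrapositive and by choosing the fiber \emph{after} forming intersections. Assume every subfamily of $\mathcal{C}$ of size $\kappa(H)\kappa(G/H)$ has a common point, and let $\mathcal{D}$ be the family of all $\kappa(H)$-fold intersections of members of $\mathcal{C}$; each is a nonempty coset by hypothesis. Any $\kappa(G/H)$ members of the image family $\overline{\mathcal{D}}$ in $G/H$ arise from at most $\kappa(H)\kappa(G/H)$ members of $\mathcal{C}$ and hence have a common point, so by the definition of $\kappa(G/H)$ all of $\overline{\mathcal{D}}$ share a point $xH$. Thus \emph{every} $\kappa(H)$-fold intersection of members of $\mathcal{C}$ meets the single fiber $xH$, so in the family $\{x^{-1}C\cap H\mid C\in\mathcal{C}\}$ any $\kappa(H)$ members have a common point, and applying $\kappa(H)$ inside $H$ finishes. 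The idea you were missing is that the common fiber must be found for the family of $\kappa(H)$-fold intersections rather than for the original cosets, and that one should only ever certify \emph{non}emptiness inside the chosen fiber, never try to deduce emptiness upstairs from emptiness of traces on a fiber.
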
 

\begin{proof}
Let ${\mathcal{C}}$ be a finite collection of left cosets in $G$,
  and assume, that any
  $\kappa(H)\kappa(G/H)$ of them have non-empty intersection.
  Let ${\mathcal{D}}$ be the collection of all $\kappa(H)$-tuple intersections
  of members of ${\mathcal{C}}$, and $\overline{\mathcal{D}}$ be the collection of their
  images in $G/H$. Then any $\kappa(G/H)$ members of $\overline{\mathcal{D}}$
  have nonempty intersection, hence all of them have a common element
  $xH$ for some $x\in G$. Hence each member of ${\mathcal{D}}$ intersects $xH$.
  Hence any $\kappa(H)$ members of ${\mathcal{C}}$ have a common point inside
  $xH$. Therefore in the collection
  $$\left\{x^{-1}C\bigcap H\;\Big|\;C\in{\mathcal{C}}\right\}$$
  any $\kappa(H)$ members have nonempty intersection. But then the
  intersection of all members is nonempty, 
  hence the intersection of all members of $\mathcal{C}$ is nonempty as well.
\end{proof}

Next we state a corollary for the Helly dimension of linear groups. 
By a theorem of Jordan (see for example \cite{bass}), for any natural number $n$ there exists a constant $J(n)$  such that if $G$ is a finite subgroup of the general linear group $GL_n(k)$ over a field $k$ of characteristic zero, then $G$ contains an abelian normal subgroup of index $\leq J(n)$. 

\begin{corollary} \label{cor:linear} 
Let $G$ be a finite subgroup of 
$GL_n(k)$, where $k$ is a field of characteristic zero. 
Then $\kappa(G)\leq (n+1)(1+\log_2(J(n)))$.  
\end{corollary}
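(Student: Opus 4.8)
The plan is to combine Jordan's Theorem with the two multiplicative-type lemmas already established, namely Lemma~\ref{lemma:extension} (the submultiplicativity of $\kappa$ under group extensions) and Corollary~\ref{cor:abelkappa} (the exact value $\kappa(A)=d+1$ for a finite abelian group minimally generated by $d$ elements). First I would invoke Jordan: since $G\leq GL_n(k)$ is finite and $\mathrm{char}(k)=0$, there is an abelian normal subgroup $A\trianglelefteq G$ with $[G:A]\leq J(n)$. Applying Lemma~\ref{lemma:extension} to this subgroup gives $\kappa(G)\leq \kappa(A)\cdot\kappa(G/A)$, so it remains to bound each factor.

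For the factor $\kappa(A)$: the abelian group $A$ is itself a subgroup of $GL_n(k)$, and a commuting family of diagonalizable matrices over a field of characteristic zero can be simultaneously diagonalized over the algebraic closure, so $A$ embeds into a direct product of $n$ copies of the multiplicative group $k^\times$ (or more precisely into a product of $n$ cyclic groups), hence $A$ is generated by at most $n$ elements. By Corollary~\ref{cor:abelkappa}, $\kappa(A)\leq n+1$.

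For the factor $\kappa(G/A)$: here $|G/A|\leq J(n)$, and I would use the crude general bound that any finite group $\Gamma$ satisfies $\kappa(\Gamma)\leq 1+\log_2|\Gamma|$. This follows from a descending-chain argument: given cosets $C_1,\dots,C_m$ with empty intersection, pick them greedily so that each new coset strictly decreases the intersection (as a coset, its ``dimension'' being $\log_2$ of the order of the associated subgroup), and each strict decrease of a coset in $\Gamma$ at least halves its cardinality, so after at most $\log_2|\Gamma|$ steps the running intersection is a single point, and one more coset excludes it; this is essentially the same mechanism as the argument via $\mathrm{soc}$ in Lemma~\ref{lemma:p-group}. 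Applying this to $\Gamma=G/A$ yields $\kappa(G/A)\leq 1+\log_2(J(n))$. Multiplying the two bounds gives $\kappa(G)\leq (n+1)(1+\log_2(J(n)))$, as claimed.

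The main obstacle is making the bound $\kappa(\Gamma)\leq 1+\log_2|\Gamma|$ fully rigorous: one must be careful that the greedy/minimal subsystem one extracts really does have the property that consecutive partial intersections are nested cosets whose orders strictly decrease, and that ``strictly smaller coset'' forces the order to drop by a factor of at least $2$ (which holds since the associated subgroup drops to a proper subgroup, of index at least $2$). Everything else — the application of Jordan, the simultaneous diagonalization giving generation by $n$ elements, and the two invocations of the earlier lemmas — is routine.
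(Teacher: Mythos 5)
Your proposal is correct and follows essentially the same route as the paper: Jordan's Theorem to get the abelian normal subgroup $A$ of index at most $J(n)$, simultaneous diagonalization over $\bar k$ to see that $A$ needs at most $n$ generators so that Corollary~\ref{cor:abelkappa} gives $\kappa(A)\leq n+1$, and Lemma~\ref{lemma:extension} to multiply the two bounds. The only cosmetic difference is that the paper obtains $\kappa(G/A)\leq 1+\log_2(J(n))$ by citing the inequality $\kappa(H)\leq\lambda(H)+1$ from Lemma 4.2 of \cite{d:sep} together with $\lambda(H)\leq\log_2|H|$, whereas you reprove this bound directly with the greedy descending-chain argument (which is sound: each newly added coset cuts the running intersection to a coset of a proper subgroup, so its cardinality at least halves).
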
 

\begin{proof} Let $A$ be an abelian normal subgroup of $G$ such that for 
$H=G/A$ we have $|H|\leq J(n)$. 
Note that after extending $k$ to its algebraic closure, $A$ can be conjugated into the group of diagonal matrices, hence $A$ is generated by $\leq n$ elements. 
Therefore $\kappa(A)\leq n+1$ by Corollary~\ref{cor:abelkappa}. 
Following \cite{d:sep}, denote by $\lambda(H)$ the maximal length of a chain of proper subgroups in $H$. Then we have the inequality 
$\kappa(H)\leq \lambda(H)+1$ (see Lemma 4.2 in \cite{d:sep}), and one has the 
trivial bound $\lambda(H)\leq \log_2(|H|)$. 
By Lemma~\ref{lemma:extension} we conclude 
$\kappa(G)\leq\kappa(A)\kappa(H)\leq (n+1)(1+\log_2(J(n)))$. 
\end{proof}


\section{Linear algebraic groups}\label{sec:alggroup} 

In this section $G$ is a linear algebraic group over an algebraically closed 
field $k$. Write $\dim(G)$ for the dimension of $G$ as an algebraic variety over $k$. By a representation of $G$ we mean a 
morphism $G\to GL_n(k)$ of algebraic groups. Denote by $H^{\circ}$ the connected component of the identity in a linear algebraic group $H$. 

\begin{theorem}\label{thm:alggphelly} 
Suppose ${\mathrm{char}}(k)=0$. 
Then the number $\kappa(G)$ is finite: we have 
\[\kappa(G)\leq \dim(G)+(n+1)(1+\log_2(J(n))),\]   
where $n$ is the dimension of a faithful representation of $G$.  
\end{theorem}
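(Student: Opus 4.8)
The plan is to reduce the statement about the algebraic group $G$ to the already-established bounds for finite groups (Corollary~\ref{cor:linear}) plus a contribution measuring the ``continuous'' part of $G$, encoded by $\dim(G)$. The key point is that an arbitrary finite system of Zariski closed cosets cannot drop dimension more than $\dim(G)$ times before becoming a single point or empty, and once we are down to a $0$-dimensional coset we are effectively working inside a finite group, where Corollary~\ref{cor:linear} applies.

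First I would set up the dimension-reduction argument. Suppose $C_1,\dots,C_m$ are Zariski closed cosets with empty intersection, and suppose for contradiction that every subsystem of size $\kappa := \dim(G) + (n+1)(1+\log_2 J(n))$ has nonempty intersection. Each $C_i = g_i H_i$ with $H_i$ a closed subgroup. Consider partial intersections $C_{i_1}\cap\cdots\cap C_{i_k}$: such an intersection, when nonempty, is a coset of $H_{i_1}\cap\cdots\cap H_{i_k}$. Greedily pick cosets so that the dimension of the running intersection strictly decreases whenever possible; after at most $\dim(G)$ steps we reach a nonempty closed coset $C'$ of minimal dimension $\delta$ among all nonempty partial intersections, obtained as an intersection of at most $\dim(G)$ of the $C_i$ (here I use that intersecting with a further $C_j$ either leaves the dimension unchanged or decreases it, and a coset has finitely many irreducible components of equal dimension). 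Then every further $C_j$ either contains $C'$ or meets it in a union of proper closed subcosets of strictly smaller dimension — but by minimality of $\delta$, the nonempty ones among these must still have dimension $\delta$, forcing $C' \cap C_j$ to be a (nonempty, by our contradiction hypothesis, as long as we have used $\le \kappa$ cosets total) union of whole connected components of $C'$.

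The second step is to pass to the component group. Let $H' = (H_{i_1}\cap\cdots)^\circ$ be the connected component, so $C'$ is a union of finitely many $H'$-cosets, permuted by the finite quotient. Replacing each remaining $C_j$ by $C_j \cap C'$ and projecting modulo $H'$, we obtain a finite system of cosets in a finite group $\Gamma$ (the group of $H'$-cosets inside the subgroup generated appropriately), and this system has empty intersection because the original system did. Now $\Gamma$ embeds into $GL_n(k)$ — indeed $G \hookrightarrow GL_n(k)$ faithfully and $\Gamma$ is a subquotient realized as a finite group of such matrices acting on an appropriate space, so Corollary~\ref{cor:linear} gives $\kappa(\Gamma) \le (n+1)(1+\log_2 J(n))$. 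Hence some $\le (n+1)(1+\log_2 J(n))$ of the projected cosets already have empty intersection, and pulling back, together with the $\le \dim(G)$ cosets used to cut down to $C'$, yields $\le \dim(G) + (n+1)(1+\log_2 J(n)) = \kappa$ of the original cosets with empty intersection — a contradiction.

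The main obstacle I anticipate is making the first step genuinely rigorous: one must argue carefully that a greedy dimension-dropping process using only $\le \dim(G)$ cosets really does reach a partial intersection $C'$ such that every remaining $C_j$ meets $C'$ in a union of connected components of $C'$ (not a ``diagonal'' proper subcoset of the same dimension). This requires knowing that for cosets of closed subgroups, $C' \cap C_j$ is again a coset of an intersection of the subgroups, hence either empty, all of $C'$ up to components, or of strictly smaller dimension; combined with minimality of $\dim C'$ this rules out the bad case. A secondary technical point is identifying the relevant finite group $\Gamma$ with a subgroup of some $GL_N(k)$ of \emph{controlled} dimension $N = n$, rather than a larger $N$ depending on the construction; here one uses that $\Gamma$ is a subquotient of $G \subseteq GL_n(k)$ and that Jordan's constant and the abelian-generator bound in Corollary~\ref{cor:linear} only depend on $n$. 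Everything else is bookkeeping.
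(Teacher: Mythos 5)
Your overall strategy coincides with the paper's own proof: use at most $\dim(G)$ of the cosets to drive the dimension of the running intersection down until no further coset can decrease it, observe that every remaining coset then meets this intersection in a coset of a subgroup containing the identity component, pass to the finite component group, and finish with Corollary~\ref{cor:linear}. The dimension-reduction step and the passage to the component group are correct as you describe them; the worry you raise about ``diagonal'' proper subcosets of the same dimension is resolved exactly as you say, since a nonempty intersection of cosets is a coset of the intersection of the corresponding subgroups, so equality of dimensions forces equality of identity components. (You do not actually need $C'$ to realize the global minimum of the dimension over all partial intersections; the termination condition of your greedy process is all that is used.)

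The one genuine gap is the step where you apply Corollary~\ref{cor:linear} to $\Gamma=H/H^{\circ}$. That corollary requires $\Gamma$ to be isomorphic to a finite \emph{subgroup} of $GL_n(k)$: both of its inputs --- Jordan's theorem and the fact that a finite abelian subgroup of $GL_n(k)$ is simultaneously diagonalizable and hence generated by at most $n$ elements --- are statements about subgroups, not subquotients. Your justification, that $\Gamma$ ``is a subquotient of $G\subseteq GL_n(k)$,'' does not produce an embedding: $H$ is in general infinite, so $\Gamma$ is a quotient of an infinite linear group and carries no a priori faithful $n$-dimensional representation, and even for finite groups the minimal faithful degree need not behave well under quotients. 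The missing ingredient is Platonov's Lemma (valid in characteristic zero; see \cite{bass}), which the paper invokes at exactly this point: $H/H^{\circ}$ is isomorphic to a subgroup of $H$, hence to a finite subgroup of $G\subseteq GL_n(k)$, after which Corollary~\ref{cor:linear} applies with the same $n$. With that lemma inserted, your argument closes and is essentially identical to the paper's.
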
 

\begin{proof} Note that if the intersection of the cosets $gH$ and $hK$ is non-empty, then it is a coset with respect to $H\cap K$, hence 
$\dim(gH\cap hK)<\min\{\dim(H),\dim(K)\}$, unless $H^{\circ}=K^{\circ}$. 
Set $d:=\dim(G)+(n+1)(1+\log_2(J(n)))$. 
Let $C_1,\ldots,C_m$  (with $m\geq d$) be Zariski closed cosets in $G$ such that any $d$ of them have a common element.  
Denote by $e$ the maximal codimension of a non-empty intersection of cosets $C_i$. After a possible renumbering of the cosets we may assume that  
$\dim(G)\geq \dim(C_1)>\dim(C_1\cap C_2)>\cdots >\dim(C_1\cap \cdots \cap C_e)\geq 0$. 
Replacing the cosets $C_i$ by $g^{-1}C_i$ for some common element $G$ of 
$C_1,\ldots,C_e$, we may assume that $C_1,\ldots,C_e$ are subgroups of $G$. Write $H$ for their intersection. Consider the cosets $D_j:=H\cap C_j$, $j=e+1,\ldots,m$ 
in $H$. Since $e\leq \dim(G)$, by our assumption any $d-\dim(G)$ of these cosets 
have a non-empty intersection. Moreover, any coset $D_j$ is a coset with respect 
to a subgroup of $H$ containing $H^{\circ}$ (otherwise ${\mathrm{codim}}(C_1\cap\cdots\cap C_e\cap C_j)>e$ contradicting to the 
definition of $e$). Write $\eta:H\to H/H^{\circ}$ for the natural surjection. Then $D_j=\eta^{-1}(E_j)$ for some coset $E_j$ of the finite group 
$H/H^{\circ}$. Moreover, any $(n+1)(1+\log_2(J(n)))$ of the cosets $E_j$ have a non-empty intersection. By a lemma of Platonov (see for example \cite{bass}) 
$H/H^{\circ}$ is isomorphic to a subgroup of $H$, hence a subgroup of $G$. 
Therefore by Corollary~\ref{cor:linear}, the intersection 
of $E_{e+1},\ldots,E_m$ is non-empty, implying that the intersection of 
$D_{e+1},\ldots,D_m$ is non-empty. Consequently, the original cosets 
$C_1,\ldots,C_m$ have a non-empty intersection.  
\end{proof} 

\begin{proposition}\label{prop:hellyinfinite} 
Suppose ${\mathrm{char}}(k)=p>0$. 
Then $\kappa(G)$ is finite if and only if $G^{\circ}$ is a torus. 
\end{proposition}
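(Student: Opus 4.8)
The plan is to prove the two directions separately. For the ``if'' direction, suppose $G^{\circ}$ is a torus $T$. Then $G/G^{\circ}$ is a finite group, and every closed subgroup of $G$ contains a finite-index subgroup of $T$ in its identity component, which is again a subtorus. The key point is that a torus $T\cong (k^{\times})^r$ has finite Helly dimension even in characteristic $p$: subtori correspond to saturated sublattices of the character lattice $\mathbb{Z}^r$, and an intersection of cosets of subtori is essentially governed by a linear-algebra/lattice condition independent of $p$, so a Carath\'eodory/Helly-type bound of the form $\kappa(T)\le r+1$ should hold (this parallels Proposition~\ref{prop:torus}). One then wants an analogue of Lemma~\ref{lemma:extension} and the dimension-induction argument in the proof of Theorem~\ref{thm:alggphelly}: given closed cosets $C_1,\dots,C_m$ in $G$ any $d$ of which meet, peel off a maximal descending chain of intersections to reduce to cosets inside a subgroup $H$ with $H^{\circ}$ a subtorus, pass to the finite quotient $H/H^{\circ}$, and combine the torus bound with a bound for finite groups. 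The subtlety is that Corollary~\ref{cor:linear} and Jordan's theorem are used over characteristic zero only to bound $\kappa$ of the finite component group; in characteristic $p$ one instead needs that the relevant finite quotients embed back into $G$ (Platonov) and have bounded Helly dimension via $\lambda(H)\le\log_2|H|$, which still works since those finite groups are linear over $k$ and $\kappa\le\lambda+1$ holds for any finite group regardless of characteristic.

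For the ``only if'' direction, suppose $G^{\circ}$ is not a torus; I want to exhibit, for each $d$, a finite system of closed cosets with empty intersection but with every $d$-subsystem meeting. Since $G^{\circ}$ is not a torus and $\mathrm{char}(k)=p>0$, $G^{\circ}$ contains a closed subgroup isomorphic to the additive group $\mathbb{G}_a$ (one finds a one-dimensional unipotent subgroup; a reductive non-torus contains a copy of $\mathrm{SL}_2$ or $\mathrm{PGL}_2$ hence a $\mathbb{G}_a$, and a non-reductive connected group has nontrivial unipotent radical, which in positive characteristic contains a $\mathbb{G}_a$). It then suffices to show $\kappa(\mathbb{G}_a)=\infty$, since $\mathbb{G}_a$ is a closed subgroup of $G$ and $\kappa$ is monotone under closed subgroups. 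In $\mathbb{G}_a=(k,+)$ with $\mathrm{char}=p$, the closed subgroups are the finite subgroups $\ker(\Phi)$ for additive (separable) polynomials $\Phi$, together with $0$ and $\mathbb{G}_a$ itself; the point is that $\mathbb{G}_a$ has \emph{lots} of finite closed subgroups, and a configuration of cosets of such finite subgroups can fail to intersect while all small subfamilies do intersect, exactly as in the finite-group lower-bound constructions.

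The cleanest way to make the lower bound precise is to reduce to a finite subgroup: in $\mathbb{G}_a$ over $\overline{\mathbb{F}_p}$ (or after base change) one has, for every $e$, a finite subgroup $V\cong \mathbb{F}_p^{\,e}$ (e.g.\ $\ker(x\mapsto x^{p^e}-x)$), which is \emph{closed} in the Zariski topology. By Lemma~\ref{lemma:p-group}, $\kappa(\mathbb{F}_p^{\,e})=e+1$, and the explicit cosets $L_0,L_1,\dots,L_e$ witnessing this are cosets of closed subgroups of $V$, hence of closed subgroups of $\mathbb{G}_a$, hence Zariski-closed cosets in $G$. Letting $e\to\infty$ gives $\kappa(G)\ge e+1$ for all $e$, so $\kappa(G)=\infty$. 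I expect the main obstacle to be the ``if'' direction: proving finiteness of $\kappa(T)$ for a torus in characteristic $p$ with a clean bound, and then correctly running the structural induction (the chain-of-intersections argument and the Platonov step) while keeping track that all subgroups that arise have identity component a subtorus so that the induction actually terminates. Care is also needed for base-field issues ($k$ algebraically closed but possibly of large transcendence degree) — one should check that passing to $\overline{\mathbb{F}_p}$-points suffices for the lower bound, which it does because the witnessing cosets are defined over $\overline{\mathbb{F}_p}$.
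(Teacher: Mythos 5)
Your ``only if'' direction is essentially the paper's argument: both reduce to $\kappa(\mathbb{G}_a)=\infty$ by exhibiting, inside $(k,+)$, Zariski-closed elementary abelian $p$-subgroups $\mathbb{F}_p^{\,e}$ of arbitrarily large rank and invoking Lemma~\ref{lemma:p-group} (the paper takes $e$ elements of $k$ linearly independent over the prime field rather than $\ker(x^{p^e}-x)$, and produces the copy of $\mathbb{G}_a$ inside the unipotent radical of a Borel subgroup, but these are cosmetic differences). That half is fine.

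The ``if'' direction, however, has a genuine gap at exactly the point you flag as the subtle one. After running the chain-of-intersections reduction and applying Platonov, the finite groups you must control are the component groups $H/H^{\circ}$ for closed subgroups $H$ of the torus (equivalently, finite subquotients of $(k^{\times})^n$), and you propose to bound their Helly dimension by $\kappa\le\lambda+1\le\log_2|H/H^{\circ}|+1$. But these finite groups have \emph{unbounded order}: $(k^{\times})^n$ contains $(\mathbb{Z}/N\mathbb{Z})^n$ for every $N$ prime to $p$, so $\log_2|H/H^{\circ}|$ gives no uniform bound and the induction does not close. The paper's fix is different and essential: every finite subquotient of an $n$-dimensional torus is \emph{abelian and generated by at most $n$ elements}, so Corollary~\ref{cor:abelkappa} gives the uniform bound $\kappa\le n+1$ independently of the order; substituting this for the Jordan-theorem step in the proof of Theorem~\ref{thm:alggphelly} yields $\kappa(T)\le n+(n+1)=2n+1$, and then $\kappa(G)\le\kappa(G^{\circ})\kappa(G/G^{\circ})$ by Lemma~\ref{lemma:alggpextension}. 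Your alternative suggestion of a lattice-theoretic Carath\'eodory bound $\kappa(T)\le r+1$ is also not viable as stated, since the closed subgroups of a torus are not just subtori but arbitrary diagonalizable subgroups, so the finite part cannot be absorbed into a purely linear-algebraic argument; the bounded-generation observation is what actually does the work.
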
 

\begin{proof} 
Let $k_{add}$ denote the additive group of $k$. 
For any $d\in \mn$ take $d$ elements in $k$ that are linearly independent over 
the prime subfield. They generate an elementary abelian $p$-subgroup $P_d$ of 
rank $d$ in $k_{add}$. 
Consequently, 
\[\kappa(k_{add})\geq \kappa(P_d)=d\quad\mbox{ for all }\quad d\in\mn.\]
This implies that as soon as some linear algebraic group $G$ contains $k_{add}$ as a Zariski closed subgroup, then 
$\kappa(G)\geq\kappa(k_{add})=\infty$. 

Suppose now that $G^{\circ}$ is not a torus. Let $B$ be a Borel subgroup of $G$, and denote by $B_u$ the unipotent radical of $B$. Then $\dim(B_u)\geq 1$ 
(see Corollary 11.5 (1) in \cite{borel}), hence 
$B_u$ contains a closed subgroup isomorphic to $k_{add}$ 
(see Theorem 10.6. (2) in \cite{borel}), implying that $\kappa(G)=\infty$. 

Conversely, let $G$ be an $n$-dimensional torus $(k^{\times})^n$. 
Recall that any factor group of $G$ by a closed normal subgroup is also a torus of dimension $\leq n$, and any finite subgroup of $G$ is generated by at most $n$ elements. So any finite subquotient of $G$ is generated by $\leq n$ elements. 
Using this simple fact instead of Jordan's Theorem, the proof of Theorem~\ref{thm:alggphelly} applied for the torus $G$ yields the bound 
$\kappa(G)\leq n+(n+1)=2n+1$. 

Note finally that $\kappa(G)\leq\kappa(G^{\circ})\cdot\kappa(G/G^{\circ})$ 
by Lemma~\ref{lemma:alggpextension} below.  
\end{proof} 

\begin{lemma}\label{lemma:alggpextension} Let $N$ be a Zariski closed normal subgroup 
of the linear algebraic group $G$. We have the inequality 
\[\kappa(G)\leq\kappa(G/N)\cdot\kappa(N).\] 
\end{lemma}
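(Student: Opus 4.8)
The plan is to adapt the proof of Lemma~\ref{lemma:extension} from the finite-group setting to the algebraic-group setting, replacing "finite collection of cosets" by "finite collection of Zariski closed cosets" throughout and checking that all the cosets produced along the way remain Zariski closed. The structure of the argument is exactly the same: given a finite system $\mathcal{C}$ of Zariski closed cosets in $G$ such that every $\kappa(G/N)\cdot\kappa(N)$ of them have a common point, I want to conclude that all of $\mathcal{C}$ has a common point.

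First I would form $\mathcal{D}$, the collection of all intersections of $\kappa(N)$-tuples of members of $\mathcal{C}$; each such intersection is either empty or a coset with respect to an intersection of the corresponding subgroups, hence Zariski closed, and by hypothesis all members of $\mathcal{D}$ are nonempty. Let $\pi\colon G\to G/N$ be the quotient morphism and let $\overline{\mathcal{D}}$ be the collection of images $\pi(D)$ for $D\in\mathcal{D}$; since $D$ is a closed coset of $G$ and $N$ is normal, $\pi(D)$ is a closed coset in $G/N$ (the image of a closed subgroup under a quotient morphism of algebraic groups is closed). Any $\kappa(G/N)$ members of $\overline{\mathcal{D}}$ arise from at most $\kappa(G/N)\cdot\kappa(N)$ members of $\mathcal{C}$, so they have a common point by hypothesis on $\mathcal{C}$; hence by definition of $\kappa(G/N)$ all members of $\overline{\mathcal{D}}$ share a common point $xN$ for some $x\in G$. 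This means every $D\in\mathcal{D}$ meets the coset $xN$, i.e.\ every $\kappa(N)$-tuple of members of $\mathcal{C}$ has a common point lying in $xN$.

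Now I would pass to the group $N$: consider the collection $\{\,x^{-1}C\cap N \mid C\in\mathcal{C}\,\}$. Each $x^{-1}C$ is a Zariski closed coset of $G$, and intersecting with $N$ gives a Zariski closed coset of $N$ (again either empty, or a coset of a closed subgroup of $N$); moreover it is nonempty precisely because $C$ meets $xN$, which we just established even for single cosets. By the previous paragraph, any $\kappa(N)$ of these cosets of $N$ have a common point (the common point of the corresponding $\kappa(N)$-tuple from $\mathcal{D}$, which lies in $xN$, translated by $x^{-1}$ into $N$). By the definition of $\kappa(N)$ applied inside $N$, the whole collection $\{x^{-1}C\cap N\}$ has a common point; translating back, all members of $\mathcal{C}$ have a common point. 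This proves $\kappa(G)\le \kappa(G/N)\cdot\kappa(N)$.

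The only genuinely new point compared to the finite case is topological bookkeeping: one must be sure that (i) intersections of Zariski closed cosets are Zariski closed cosets (immediate, since an intersection of closed subgroups is a closed subgroup), and (ii) the image $\pi(D)$ of a Zariski closed coset under the quotient morphism $G\to G/N$ is again Zariski closed — this is where I expect to lean on standard structure theory of linear algebraic groups, namely that a morphism of algebraic groups maps closed subgroups to closed subgroups, so the image of a closed coset is a closed coset. If $\kappa(N)$ or $\kappa(G/N)$ is infinite the inequality is vacuous, so one may assume both are finite, which is implicitly what makes the counting in the argument meaningful. I would spell out only these two closedness facts carefully; everything else is a verbatim transcription of the proof of Lemma~\ref{lemma:extension}.
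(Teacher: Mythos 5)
Your proposal is correct and is exactly the paper's argument: the paper's proof simply notes that a homomorphism of linear algebraic groups maps a closed subgroup onto a closed subgroup and then repeats the proof of Lemma~\ref{lemma:extension} verbatim, which is precisely the transcription you carry out (with the same two closedness checks). No difference in approach, only that you spell out the bookkeeping the paper leaves implicit.
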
 

\begin{proof} Recall that a homomorphism of linear algebraic groups maps a closed subgroup onto a closed subgroup. 
Therefore the proof of Lemma~\ref{lemma:extension} can be repeated verbatim. 
\end{proof} 


\section{An application for rational invariants}\label{sec:rational} 

Given topological $G$-spaces $X_i$, $i=1,\ldots,m$ (i.e. $G$ is a topological group, and the $X_i$ are topological spaces endowed with a continuous $G$-action), we shall 
consider the diagonal action of $G$ on the product set $X=\prod_{i=1}^mX_i$. 
For a subset $I\subseteq \{1,\ldots,m\}$, write $X_I:=\prod_{i\in I} X_i$, and denote  $\pi_I:X\to X_I$, $x\mapsto x_I$ the corresponding projection map. 

The following lemma explains the relevance of the concept of Helly dimension for transformation groups. 

\begin{lemma}\label{lemma:hellydimnatural}  
Let $\mathcal{T}$ be a collection of topological $G$-spaces, and assume that $G/H\in\mathcal{T}$ for all closed subgroups $H\leq G$. 
Then the  Helly dimension $\kappa(G)$ is the minimal natural number $d$ such that 
for any finite product $X=\prod X_i$ (with $X_i\in\mathcal{T}$) and for any points $x,y\in X$ having different $G$-orbits, there is an index subset $I$ of size at most $d$ such that   
the projections $x_I$ and $y_I$ belong to different orbits in $X_I$. 
\end{lemma}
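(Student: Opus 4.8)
The plan is to prove the asserted equality of two integers by the two obvious inequalities: that the number $\kappa(G)$ of Definition~\ref{def:hellydim} \emph{has} the separation property stated in the lemma, and that \emph{no smaller} number does. Both rest on one dictionary. Given $X=\prod_{i=1}^{m}X_i$ with all $X_i\in\mathcal T$ and points $x,y\in X$, for each $i$ set $C_i:=\{g\in G\mid g\cdot x_i=y_i\}$. If $x_i$ and $y_i$ lie in different $G$-orbits of $X_i$ then $C_i=\emptyset$; otherwise $C_i$ is a left coset of the stabilizer $\mathrm{Stab}_G(x_i)$, and this stabilizer is a \emph{closed} subgroup of $G$ --- this is the only point where the topology enters, and it is immediate for the quotients $G/H$ (where $\mathrm{Stab}_G(gH)=gHg^{-1}$) as well as for the $G$-varieties and compact-group actions occurring later in the paper. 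The key remark is then purely set-theoretic: for any index set $I$ the projections $x_I,y_I$ lie in one orbit of $X_I$ if and only if $\bigcap_{i\in I}C_i\neq\emptyset$; in particular $x,y$ lie in different orbits precisely when the finite system $\{C_1,\dots,C_m\}$ of closed cosets has empty intersection.

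For the first inequality, suppose $x,y$ lie in different orbits. If some $C_i$ is empty we take $I=\{i\}$. Otherwise $\{C_1,\dots,C_m\}$ is a finite system of closed cosets with empty intersection, so Definition~\ref{def:hellydim} furnishes a subsystem indexed by some $I$ with $|I|\le\kappa(G)$ and empty intersection; by the dictionary, $x_I$ and $y_I$ then lie in different orbits. Hence the minimal valid $d$ in the lemma is $\le\kappa(G)$.

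For the reverse inequality I invoke the hypothesis $G/H\in\mathcal T$. By minimality in Definition~\ref{def:hellydim} there is a finite system of closed cosets $g_iH_i$ ($i=1,\dots,m$) with $\bigcap_{i=1}^m g_iH_i=\emptyset$ but with any $\kappa(G)-1$ of them having a common point. Put $X_i:=G/H_i\in\mathcal T$, let $x$ be the point with $i$-th coordinate $eH_i$ and $y$ the point with $i$-th coordinate $g_iH_i$. Since $g\cdot x=y$ would mean $g\in g_iH_i$ for every $i$, the points $x,y$ lie in different orbits; yet for every $I$ with $|I|\le\kappa(G)-1$ the set $\bigcap_{i\in I}g_iH_i$ is nonempty, so $x_I$ and $y_I$ lie in the same orbit. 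Thus this pair cannot be separated by any index set of size $<\kappa(G)$, so the minimal valid $d$ is $\ge\kappa(G)$. Together with the previous paragraph this gives equality (and shows the lemma's $d$ is $\infty$ exactly when $\kappa(G)=\infty$).

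I expect the only genuine point of care to be the closedness claim of the first paragraph: once one knows the cosets $C_i$ are closed, the lemma is just Definition~\ref{def:hellydim} read through the dictionary, and everything else is bookkeeping.
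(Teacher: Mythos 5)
Your proof is correct and follows essentially the same route as the paper: the same dictionary between points $x,y$ and the system of closed cosets $C_i=\{g\mid g\cdot x_i=y_i\}$, and the same construction with homogeneous spaces $G/H_i$ for the reverse inequality. Your explicit handling of the degenerate case where some $C_i$ is empty is a small point of extra care that the paper passes over silently.
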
 

\begin{proof}  Take $x,y\in X$, and for each $i\in\{1,\ldots,m\}$, consider the subset 
$C_i$ of elements of $G$ moving $x_i\in X_i$ to $y_i\in X_i$. Clearly $C_i$ is a left coset in $G$ with respect to the isotropy subgroup of $x_i$ in $G$. So $C_i$ is closed. 
Moreover, for an index subset $I\subseteq\{1,\ldots,m\}$ we have $g\cdot x_I=y_I$ if and only if 
$g\in\cap_{i\in I}C_i$. 

Therefore  if $g\cdot x_I=y_I$ holds for all $I$ with $|I|\leq\kappa(G)$, then by definition of the Helly dimension we conclude that $g\in C_i$ for all $i=1,\ldots,m$, hence $g\cdot x=y$. 

To see the reverse inequality, by definition of $\kappa(G)=:l$, there exist closed cosets 
$C_1,\ldots,C_l$ in $G$ such that any $l-1$ of them have a non-empty intersection, but 
$\cap_{i=1}^lC_i=\emptyset$. We have $C_i=g_iH_i$, where $H_i$ is a closed subgroup of $G$ 
and $g_i\in G$. Take for $X_i$ the $G$-homogeneous space $G/H_i$, and consider in 
$X=\prod_{i=1}^l X_i$ the points $x:=(1,\ldots,1)$ (we write $1$ for the point in $G/H_i$ corresponding to the identity element of $G$) and $y:=(g_1,\ldots,g_l)$. Then $y\notin G\cdot x$, although 
for all $I\subsetneq\{1,\ldots,l\}$ we have $G\cdot x_I=G\cdot y_I$. 
\end{proof} 

In the rest of this section $G$ is a linear algebraic group over an algebraically closed field $k$,  $X_i$ ($i=1,\ldots,m$) are irreducible 
algebraic varieties, on which $G$ acts morphically. 
We write $k(X)$ for the field of rational functions on $X$, and $k(X)^G$ for the subfield of $G$-invariants. 
The morphism $\pi_I:X\to X_I$ induces an embedding of $k(X_I)$ into $k(X)$; we shall identify $k(X_I)$ with the corresponding subfield of $k(X)$. 

\begin{theorem} \label{thm:rational} 
Suppose that $k$ has characteristic zero. 
The field $k(X)^G$ of rational invariants is generated by its subfields 
$k(X_I)^G$, where $I$ ranges over the subsets of $\{1,\ldots,m\}$ with  cardinality 
$|I|\leq \kappa(G)$. 
\end{theorem}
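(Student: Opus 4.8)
The plan is to reduce the statement to the Helly-dimension property of $G$ via a standard valuation-theoretic criterion for when a rational function lies in a given subfield. Let $K = k(X)^G$ and let $L$ be the subfield of $K$ generated by all $k(X_I)^G$ with $|I| \le \kappa(G)$. Clearly $L \subseteq K$, so I must show $K \subseteq L$. Take $f \in k(X)^G$. The first step is to locate $f$ among the subfields $k(X_I)$ (forgetting invariance for the moment): since $f$ is a rational function on the product $X = \prod X_i$, there is a smallest subset $I_0 \subseteq \{1,\dots,m\}$ with $f \in k(X_{I_0})$. This minimal-support set exists because $k(X) = k(X_1) \otimes_k \cdots \otimes_k k(X_m)$ localized, and the subfields $k(X_I)$ form a lattice under intersection: $k(X_I) \cap k(X_{I'}) = k(X_{I \cap I'})$ for irreducible varieties in characteristic zero (this uses that $X_i$ are irreducible, so $k(X_I)$ and $k(X_{I'})$ are linearly disjoint over $k(X_{I\cap I'})$). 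Hence $f \in k(X_{I_0})^G$, and it suffices to prove $|I_0| \le \kappa(G)$, or rather that every such minimal $f$ can be written in terms of invariants supported on sets of size $\le \kappa(G)$.

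The key step is to connect the support of an invariant with the orbit-separation formulation of Helly dimension in Lemma~\ref{lemma:hellydimnatural}. Here is the mechanism I expect to use. Suppose $|I_0| > \kappa(G)$; I will derive a contradiction with minimality. For each index $i \in I_0$, consider $X$ with $X_i$ replaced by a second copy, i.e. work on $X \times X$ with the diagonal $G$-action, and the two projections; an invariant $f$ supported exactly on $I_0$ detects that the $I_0$-components of two points lie in different orbits but that no proper subset of $I_0$ does. More precisely, the condition "$f \notin k(X_J)$ for every proper $J \subsetneq I_0$" should translate, after passing to generic points (possible since $k$ has characteristic zero and the $X_i$ are irreducible, so generic orbits are as large as possible and the quotient field is the invariant field), into the existence of generic points $x, y$ with $x_{I_0}, y_{I_0}$ in distinct $G$-orbits but $x_J, y_J$ in the same $G$-orbit for all proper $J \subsetneq I_0$. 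Applying Lemma~\ref{lemma:hellydimnatural} with $\mathcal{T}$ the class of irreducible $G$-varieties (which contains all $G/H$) forces $|I_0| \le \kappa(G)$, a contradiction.

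To make the translation in the previous paragraph rigorous, I would argue at the level of the generic orbit: let $\xi_i$ be the generic point of $X_i$ over $k$, so $\xi = (\xi_1,\dots,\xi_m)$ has residue field $k(X)$, and the $G$-orbit of $\xi$ corresponds (by Rosenlicht's theorem on rational quotients, valid in characteristic zero) to the subfield $k(X)^G$. The statement "$f \notin k(X_J)$" means that $f$ separates two generic points of $X$ agreeing in the $X_J$-coordinates — equivalently, two $k(X_{\{1,\dots,m\}\setminus J})$-generic lifts; one then checks these lifts have the same $G$-orbit in $X_J$ exactly when the corresponding coset intersection in $G$ (isotropy cosets of the coordinates) is nonempty. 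This is precisely the dictionary in the proof of Lemma~\ref{lemma:hellydimnatural}, now applied over the function field rather than over $k$-points. The main obstacle, and the step I would be most careful about, is exactly this passage from $k$-points to generic points: Lemma~\ref{lemma:hellydimnatural} is stated for points of topological $G$-spaces, and I must either extend it to generic points over a field extension of $k$ (harmless, since $\kappa$ is insensitive to base field extension for the cosets involved, all of which are defined over the relevant fields) or else, more cleanly, specialize — a generic non-separation on $X_J$ gives a dense set of $k$-points where non-separation holds, and Rosenlicht's theorem guarantees a nonempty open set where orbit membership is detected by the invariants, allowing me to pick honest $k$-points $x,y$ to feed into Lemma~\ref{lemma:hellydimnatural}. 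Once the index set $I_0$ is bounded by $\kappa(G)$, the function $f \in k(X_{I_0})^G$ lies in $L$ by definition, and since $f$ was an arbitrary generator of $k(X)^G$, we conclude $K = L$.
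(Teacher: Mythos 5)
There is a genuine gap, and it lies at the very first reduction. You replace the theorem by the claim that every individual invariant $f\in k(X)^G$ has minimal support $I_0$ of size at most $\kappa(G)$, and then try to derive a contradiction from $|I_0|>\kappa(G)$. But that intermediate claim is simply false, and no contradiction is available: the theorem only asserts that $k(X)^G$ is \emph{generated} by the small-support subfields, not that every invariant lies in one of them. For a concrete counterexample, let $G$ be cyclic of order $2$ acting by $\pm 1$ on $X_1=X_2=X_3=k$; here $\kappa(G)=2$, yet $x_1^2x_2x_3$ is an invariant whose minimal support is all of $\{1,2,3\}$. It is of course a product of invariants of support size $\le 2$, consistent with the theorem but fatal to your strategy. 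Your hedge ``or rather that every such minimal $f$ can be written in terms of invariants supported on sets of size $\le\kappa(G)$'' is just the theorem restated, so the argument as structured is circular. A secondary problem is the claimed translation of ``$f\notin k(X_J)$ for all proper $J\subsetneq I_0$'' into the existence of two points agreeing on all proper sub-projections but lying in different orbits: non-membership in $k(X_J)$ only says $f$ depends on coordinates outside $J$, and indeed Lemma~\ref{lemma:hellydimnatural} itself shows such a pair of points cannot exist once $|I_0|>\kappa(G)$ and the points are honest points of $X$ --- so this route cannot detect anything about a single invariant's support.

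The correct argument, which you gesture at in your last paragraph but never assemble, runs in the opposite direction: do not start from an invariant, start from separating sets. By Rosenlicht's theorem each $k(X_I)^G$ with $|I|=\kappa(G)$ contains a finite set $R_I$ separating orbits on a dense open $G$-stable $U_I\subseteq X_I$; on the open set $U=\bigcap_I\pi_I^{-1}(U_I)$ the union $R=\bigcup_I R_I$ separates orbits, because if all $f\in R$ agree on $x,y\in U$ then $G\cdot x_I=G\cdot y_I$ for every $I$ of size $\kappa(G)$, whence $G\cdot x=G\cdot y$ by Lemma~\ref{lemma:hellydimnatural}. In characteristic zero, a set of rational invariants separating orbits in general position generates $k(X)^G$ as a field, which finishes the proof. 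Note that this uses only ordinary $k$-points of $X$, so none of the generic-point machinery you were worried about is needed.
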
 

We shall use the following terminology in connection with a $G$-variety $Y$. 
We say that $S\subset k(Y)^G$  {\it separates orbits in the subset} $U\subseteq Y$  
if  for any $x,y\in U$ with $G\cdot x\neq G\cdot y$ there exists an $f\in S$ defined both at $x$ and $y$, such that  $f(x)\neq f(y)$. 
We say that $S\subset k(Y)^G$  {\it separates orbits in general position} if there is a dense open subset 
$U$ of $Y$ such that $S$ separates orbits in $U$. 
Note that $S$ separates orbits in general position if and only if $k(Y)^G$ is a purely inseparable extension of $k(S)$, see for example \cite{borel}. In particular, when ${\mathrm{char}}(k)=0$, then 
$S$ separates orbits in general position if and only if $S$ generates the field $k(Y)^G$ over $k$, 
see for example Lemma 2.1 in \cite{popov-vinberg}. 

\bigskip
\begin{proofofthm}{}\ref{thm:rational}. For all $I\subseteq \{1,\ldots,m\}$ with $|I|= \kappa(G)$, 
$k(X_I)^G$ contains a finite subset $R_I$ separating orbits in general position in $X_I$
by a theorem of Rosenlicht \cite{rosenlicht}. 
Therefore there is a non-empty $G$-stable open subset $U_I$ in $X_I$ on which all elements of $R_I$ are defined, and $R_I$ separates orbits in $U_I$. Then $U:=\cap_{|I|=\kappa(G)}\pi_I^{-1}(U_I)$ is a non-empty open $G$-stable subset of $X$ on which all elements of 
$R:=\cup_I R_I\subset k(X)^G$ are defined. Moreover, we claim that $R$ separates orbits in $U$. 
Indeed, take $x,y\in U$ such that $f(x)=f(y)$ for all $f\in R$. Then for all $I$ with 
$|I|=\kappa(G)$, we have that $x_I:=\pi_I(x)\in U_I$, 
$y_I:=\pi_I(y)\in U_I$, 
and $f(x_I)=f(y_I)$ for all $f\in R_I$. 
It follows that $G\cdot x_I=G\cdot y_I$ holds for all $I$ with $|I|=\kappa(G)$. 
By Lemma~\ref{lemma:hellydimnatural} we conclude $G\cdot x=G\cdot y$. 
This shows that $R$ separates orbits in $U$. 
Since ${\mathrm{char}}(k)=0$, this implies that $R$ generates $k(X)^G$. 
\end{proofofthm} 

We saw before that in positive characteristic the Helly dimension of an algebraic group is not finite in general. Developing further the example given in the proof of  Proposition~\ref{prop:hellyinfinite} we show that the analogue of Theorem~\ref{thm:rational} also fails in positive characteristic. To be more precise, for a linear algebraic group $G$, and a product of $G$-varieties 
$X:=\prod_{i=1}^mX_i$, denote by $\rho(G,X)$ the minimal natural number 
$r$ such that the union of the subfields $k(X_I)^G$ with $|I|\leq r$ separates orbits in general position in $X$. 
Define $\rho(G)$ as the supremum of $\rho(G,X)$, as $X$ ranges over all 
products of $G$-varieties. 

\begin{proposition}\label{prop:rhokadd}  
If ${\mathrm{char}}(k)=p>0$, then for the additive group $k_{add}$ of the 
(algebraically closed) base field we have $\rho(k_{add})=\infty$. 
\end{proposition}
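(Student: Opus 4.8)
The plan is to exhibit, for every $r \in \mathbb{N}$, a product of $k_{add}$-varieties on which the invariants coming from $\leq r$ factors fail to separate orbits in general position, while the full ring of invariants does separate generic orbits. First I would fix $r$ and choose $r+1$ elements $c_1, \ldots, c_{r+1} \in k$ that are linearly independent over the prime field $\mathbb{F}_p$, exactly as in the proof of Proposition~\ref{prop:hellyinfinite}; these generate an elementary abelian $p$-subgroup $P$ of $k_{add}$ of rank $r+1$, and the point is that the $r+1$ cosets of the coordinate hyperplanes $L_i = \{x : x_i = 0\}$ together with $L_0 = \{x : \sum x_i = 1\}$ in $P \cong \mathbb{F}_p^{r+1}$ have empty intersection but every $r$ of them meet. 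Realize each of these $r+1$ cosets as a coset $a_i H_i$ in $k_{add}$ with $H_i$ a closed (finite-index in $P$, but these are subgroups of $k_{add}$) subgroup, and take $X_i = k_{add}/H_i$ with $X = \prod_{i=1}^{r+1} X_i$; set $x = (1, \ldots, 1)$ and $y = (a_1, \ldots, a_{r+1})$ as in Lemma~\ref{lemma:hellydimnatural}, so that $y \notin G \cdot x$ but $y_I \in G \cdot x_I$ for every proper $I$.

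The subtlety, and the reason this needs more than a verbatim appeal to Lemma~\ref{lemma:hellydimnatural}, is that $\rho$ is about separation \emph{in general position} by \emph{rational} invariants, not about a single orbit or about all orbits. So I would argue as follows. For each proper subset $I \subsetneq \{1,\ldots,r+1\}$ the group $G = k_{add}$ acts on $X_I$ with kernel $\bigcap_{i \in I} H_i$, which is again a subgroup of the torsion group $P + (\text{stuff})$; in fact $X_I$ is a homogeneous space for the quotient group, which is again isomorphic to a subgroup-quotient of $k_{add}$, and on a homogeneous space every invariant rational function is constant. Hence $k(X_I)^G = k$ is trivial for every proper $I$, so the union of the subfields $k(X_I)^G$ with $|I| \leq r$ is just $k$ and certainly does not separate the two generic-in-$X$ points $x$ and $y$ (indeed it separates nothing). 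On the other hand $k(X)^G$ is nontrivial: the full product $X$ is the homogeneous space $G/(\bigcap_i H_i)$, and $\bigcap_i H_i = \bigcap_i H_i$ has infinite index — more precisely, since the $L_i$ have empty intersection inside $P$, one checks $\bigcap H_i$ does not contain $P$, so $G/\bigcap H_i$ is not a single $G$-orbit and $k(X)^G \supsetneq k$. Therefore $\rho(k_{add}, X) > r$, and letting $r \to \infty$ gives $\rho(k_{add}) = \infty$.

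Let me restate the one delicate point: I need the two points $x$ and $y$ of $X$ to have distinct $G$-orbits \emph{and} to lie in a dense open subset on which genuine separation is being tested — but because $X$ is homogeneous, \emph{every} point is generic, so "separation in general position in $X$" just means separation of generic orbits, and since $x, y$ lie in different $G$-orbits, no collection of invariant functions that are all constant can separate them. Concretely: if the union $S$ of the $k(X_I)^G$ with $|I| \le r$ separated orbits in general position, there would be a dense open $U \subseteq X$ on which $S$ separates; but $S = k$, which separates no pair of distinct orbits, while $U$, being dense in the homogeneous space $X$, meets more than one $G$-orbit (as $X$ is not a single orbit) — a contradiction. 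This is the step I expect to require the most care: verifying that $k(X)^G \neq k$, i.e. that $X = \prod X_i$ genuinely decomposes into more than one $G$-orbit. This follows from the combinatorial fact that $L_0 \cap L_1 \cap \cdots \cap L_{r+1} = \emptyset$ in $\mathbb{F}_p^{r+1}$: translating to $k_{add}$, the intersection of all the cosets $a_i H_i$ is empty, which says precisely $y \notin G \cdot x$, so $X$ has at least two orbits. The rest — that invariants from $\leq r$ factors are constant because each $X_I$ with $|I| \le r$ is a single $G$-orbit, equivalently because $L_I := \bigcap_{i \in I} L_i \neq \emptyset$ for $|I| \le r$ — is the Helly-type combinatorics already isolated in Lemma~\ref{lemma:p-group} and needs only to be carried over.
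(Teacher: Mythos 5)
Your construction of the varieties $X_i = k_{add}/H_i$ and of the pair of points $x$ and $y=\widetilde{x}$ is exactly the one the paper uses, but the mechanism you propose for why invariants from $\le r$ factors fail is wrong. You claim that each $X_I=\prod_{i\in I}X_i$ with $I$ proper is a single $G$-orbit (a homogeneous space for a quotient of $G$), so that $k(X_I)^G=k$, and likewise that $X$ itself is "the homogeneous space $G/\bigcap_i H_i$". This cannot be right: the subgroups $H_i$ are finite (they sit inside the finite group $P$), so each $X_i=k_{add}/H_i$ is one-dimensional and $X_I$ has dimension $|I|$, whereas every $G$-orbit has dimension at most $\dim(k_{add})=1$. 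For $|I|\ge 2$ the space $X_I$ is far from homogeneous, and $k(X_I)^G$ has transcendence degree $|I|-1$ over $k$; it is certainly not the constant field. The same confusion makes your assertion that $X=G/\bigcap_i H_i$ "is not a single $G$-orbit" internally contradictory. Consequently the core of your argument --- that the union $S$ of the $k(X_I)^G$ with $|I|\le r$ equals $k$ and hence separates nothing --- collapses.

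The correct reason, and the one the paper gives, is not that these invariants are constant but that they take equal values on the specific pairs $x$, $\widetilde{x}:=(g_1,\ldots,g_{d+1})\cdot x$: for every proper $I$ the projections $x_I$ and $\widetilde{x}_I$ lie in the same $G$-orbit (because $\bigcap_{i\in I}g_iH_i\neq\emptyset$), so every $f\in k(X_I)^G$ satisfies $f(x)=f(\widetilde{x})$, while $x$ and $\widetilde{x}$ lie in different orbits (because $\bigcap_{i\in D}g_iH_i=\emptyset$). The "general position" issue --- which you rightly flag as the delicate point but resolve with the false homogeneity claim --- is handled in the paper by noting that $x\mapsto\widetilde{x}$ is defined on all of $X$, and that any $G$-stable dense open $U$ must contain some $x$ with $\widetilde{x}\in U$ as well: otherwise $X=Z\cup(g_1^{-1},\ldots,g_{d+1}^{-1})Z$ with $Z=X\setminus U$ a proper closed subset, contradicting irreducibility of $X$. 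To repair your proof you would need to replace the claim $k(X_I)^G=k$ by this orbit-coincidence argument and supply this covering argument for genericity.
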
 

\begin{proof} 
Set $G:=k_{add}$, and for $d\in\mn$, take an elementary abelian $p$-subgroup 
$P_d<G$ of rank $d$   
considered in the proof of Proposition~\ref{prop:hellyinfinite}. Take cosets 
\[g_iH_i \quad(i\in D:=\{1,\ldots,d+1\})\]
with  
$\bigcap_{i\in D}g_iH_i=\emptyset$ and 
$\bigcap_{i\in D\setminus\{j\}}g_iH_i\neq \emptyset$  for all $j\in D$. 
Consider the homogeneous spaces 
\[X_j:=G/H_j, \quad j\in D.\] 
Note that $H_j$ is the stabilizer of any point in $X_j$ (since $G$ is abelian). 
For $x\in X:=\prod_{j\in D}X_j$ set 
\[\widetilde x:=(g_1,\ldots,g_{d+1})\cdot x.\] 
By construction, $G\cdot x\neq G\cdot \widetilde x$ but we have 
\[G\cdot x_{D\setminus\{j\}}=G\cdot \widetilde x_{D\setminus\{j\}} 
\mbox{ for all }j\in D.\]  
Therefore if $I$ is a proper subset of $D$, then for any $f\in k(X_I)^G$ we have 
$f(x)=f(\widetilde x)$. 
Moreover, an arbitrary $G$-stable dense open subset $U$ of $X$ contains an 
$x\in U$ such that $\widetilde x$ is also contained in $U$. 
(Otherwise $X\subset Z\cup (g_1^{-1},\ldots,g_{d+1}^{-1})Z$ 
with $Z:=X\setminus U$, a contradiction.)  So a set of rational invariants separating orbits in general position in  $X$ necessarily contains an element that depends on all the $d+1$ 
factors $X_1,\ldots,X_{d+1}$. This construction can be performed with $d$ arbitrarily large, 
thus $\rho(G)=\infty$. 
\end{proof}

However, imposing some extra assumptions on the $G$-action on the $X_i$, one can get a bound on $\rho(G,X)$ in terms of the algebro-geometric dimension of $G$: 

\begin{proposition}\label{thm:gsd} 
Let $G$ be an algebraic group acting faithfully on all the $X_i$, $i=1,\ldots,m$. 
Then we have $\rho(G,X)\leq\dim(G)+2$. 
\end{proposition}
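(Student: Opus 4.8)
The plan is to mimic the proof of Theorem~\ref{thm:rational}, reducing the required separation property to a Helly-type statement about cosets in $G$, but this time extracting the sharp bound $\dim(G)+2$ from the faithfulness hypothesis rather than from $\kappa(G)$. If $m\le\dim(G)+2$ there is nothing to prove ($I=\{1,\dots,m\}$ works), so assume $m\ge\dim(G)+2$ and set $r:=\dim(G)+2$ and $I_0:=\{1,\dots,\dim(G)+1\}$.

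The geometric heart of the matter is the claim that $\dim(G)+1$ faithful factors already force a trivial generic stabilizer: I would show that \emph{there is a dense open $G$-stable subset $V\subseteq X$ such that $\bigcap_{i\in I_0}G_{x_i}=\{1\}$ for every $x\in V$}, writing $G_{x_i}$ for the isotropy group of $x_i$ in $G$. Indeed, let $S$ be the generic stabilizer of the $G$-action on $X_{I_0}$ (well defined up to conjugacy since $\mathrm{char}(k)=0$), and suppose $S\ne\{1\}$. For each $i\in I_0$ the $G$-action on the irreducible variety $X_i$ is faithful, so $S$ does not act trivially on $X_i$ and the fixed-point set $X_i^{S}$ is a proper closed subvariety; hence $\mathrm{Fix}(S,X_{I_0})=\prod_{i\in I_0}X_i^{S}$ has codimension at least $|I_0|$ in $X_{I_0}$. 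On the other hand the points of $X_{I_0}$ with stabilizer conjugate to $S$ are dense and each lies in $G\cdot\mathrm{Fix}(S,X_{I_0})$, so $\dim(X_{I_0})\le\dim(G)+\dim\mathrm{Fix}(S,X_{I_0})\le\dim(G)+\dim(X_{I_0})-|I_0|$, forcing $|I_0|\le\dim(G)$ and contradicting $|I_0|=\dim(G)+1$. (If one prefers not to quote that generic stabilizers are conjugate, the set $\{x\in X_{I_0}:G_x\ne\{1\}\}$ is contained in the image in $X_{I_0}$ of the union of those irreducible components of $\{(g,x)\in G\times X_{I_0}:gx=x\}$ that are not contained in $\{1\}\times X_{I_0}$, and a routine sorting of components bounds the dimension of that union by $\dim(G)+\dim(X_{I_0})-|I_0|$, using again that for $g\ne1$ the fibre $X_{I_0}^{g}=\prod_{i\in I_0}X_i^{g}$ has codimension at least $|I_0|$.)

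Granting the claim, I would finish as in Theorem~\ref{thm:rational}. For each index set $I$ with $|I|=r$, Rosenlicht's theorem provides a finite $R_I\subseteq k(X_I)^G$ and a dense open $G$-stable $U_I\subseteq X_I$ on which $R_I$ separates orbits. Put $U:=V\cap\bigcap_{|I|=r}\pi_I^{-1}(U_I)$, a dense open $G$-stable subset of $X$, and $R:=\bigcup_{|I|=r}R_I$, which lies in the union of the $k(X_I)^G$ with $|I|\le r$. Then $R$ separates orbits in $U$: if $x,y\in U$ and $f(x)=f(y)$ for all $f\in R$, then $G\cdot x_I=G\cdot y_I$ for all $I$ with $|I|=r$, hence (by projecting) for all $I$ with $|I|\le r$. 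Setting $C_i:=\{g\in G:g\cdot x_i=y_i\}$, which is empty or a left coset of $G_{x_i}$, this means $\bigcap_{i\in I}C_i\ne\emptyset$ whenever $|I|\le r$. In particular $\bigcap_{i\in I_0}C_i\ne\emptyset$, and since $x\in V$ gives $\bigcap_{i\in I_0}G_{x_i}=\{1\}$, this intersection is a single point $\{g_0\}$. For any $j$, $|I_0\cup\{j\}|\le r$, so $\{g_0\}\cap C_j=\bigcap_{i\in I_0\cup\{j\}}C_i\ne\emptyset$, i.e.\ $g_0\cdot x_j=y_j$; thus $g_0\cdot x=y$ and $G\cdot x=G\cdot y$. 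Hence $R$ separates orbits in general position in $X$, so $\rho(G,X)\le r=\dim(G)+2$.

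The only real obstacle is the claim of the second paragraph: the dimension count is short, but one must take care to pass from ``generic stabilizer of dimension $0$'' — which lower semicontinuity of $\dim G_x$ alone yields after $\dim(G)+1$ factors — to ``generic stabilizer $=\{1\}$''. This is precisely where faithfulness on each individual $X_i$ (not merely on the product) and the codimension estimate for the fixed-point loci do the work; everything downstream is a routine adaptation of the proof of Theorem~\ref{thm:rational}.
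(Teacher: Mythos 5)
Your proof is correct, and although the outer architecture (Rosenlicht on each $(\dim(G)+2)$-element index set, intersecting the resulting open sets, and a stabilizer argument driven by faithfulness on each factor) matches the paper's, the two key steps are carried out genuinely differently. The paper only extracts a \emph{finite} generic stabilizer from the first $d=\dim(G)$ factors (via the observation that $\mds(Y\times Z)<\mds(Y)$ when $\mds(Y)>0$ and $G$ is faithful on $Z$), then removes, for each remaining index $i$, the closure of the locus $Z_i$ where some non-identity element of the finite group $G_{x_D}$ fixes $x_{d+i}$; its index sets are $D\cup\{i,j\}$, and the endgame shows that the resulting unique elements $g_i,g_j\in G_{x_D}$ coincide because $G\cdot(z,x_i,x_j)=G\cdot(z,y_i,y_j)$. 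You instead prove the stronger statement that $\dim(G)+1$ faithful factors already force a \emph{trivial} generic stabilizer, and then the endgame is cleaner: $\bigcap_{i\in I_0}C_i$ is a singleton $\{g_0\}$ and each $(I_0\cup\{j\})$-intersection forces $g_0\in C_j$. So your route buys a more transparent final step at the cost of a slightly harder geometric lemma, while the paper's buys an easier geometric input ($\mds$-semicontinuity) at the cost of the $Z_i$ bookkeeping. One caveat: your first justification of the key claim quotes the conjugacy of generic stabilizers in characteristic zero, but the proposition carries no characteristic hypothesis (only Corollary~\ref{cor:dim(G)+2} adds ${\mathrm{char}}(k)=0$), and that conjugacy statement is not a routine citation for arbitrary, possibly non-reductive, $G$ acting on arbitrary irreducible varieties; you should therefore rely on your second, component-sorting argument --- the dimension count $\dim C\leq\dim(G)+\dim(X_{I_0})-|I_0|$ for each component $C$ of the incidence variety not contained in $\{1\}\times X_{I_0}$, using that $X_{I_0}^{g}=\prod_{i\in I_0}X_i^{g}$ has codimension at least $|I_0|$ for $g\neq 1$ --- which is self-contained and characteristic-free.
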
 

\begin{corollary}\label{cor:dim(G)+2} 
Suppose ${\mathrm{char}}(k)=0$, and $G$ acts faithfully on $X_i$, $i=1,\ldots,m$. 
Then $k(X)^G$ is generated by its subfields $k(X_I)^G$ where $|I|\leq \dim(G)+2$. 
\end{corollary}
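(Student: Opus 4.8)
The plan is to obtain Corollary~\ref{cor:dim(G)+2} as an immediate consequence of Proposition~\ref{thm:gsd} together with the field-theoretic reformulation of ``separating orbits in general position'' recorded just before the proof of Theorem~\ref{thm:rational}. So there is really no new argument to carry out, only a matter of unwinding definitions.

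First I would set $S:=\bigcup_I k(X_I)^G\subseteq k(X)^G$, the union taken over all index subsets $I\subseteq\{1,\ldots,m\}$ with $|I|\leq\dim(G)+2$, where each $k(X_I)^G$ is viewed as a subfield of $k(X)^G$ via the embedding induced by $\pi_I$. The subfield of $k(X)$ generated by $S$ over $k$ is precisely the subfield generated by all the $k(X_I)^G$ with $|I|\leq\dim(G)+2$, so proving the corollary amounts to showing $k(S)=k(X)^G$. (Since $\pi_{I'}$ factors through $\pi_I$ whenever $I'\subseteq I$, one has $k(X_{I'})^G\subseteq k(X_I)^G$, so $S$ is in fact the union of the $k(X_I)^G$ with $|I|=\min\{m,\dim(G)+2\}$; this observation is not strictly needed but clarifies the statement.)

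Next I would invoke Proposition~\ref{thm:gsd}, which gives $\rho(G,X)\leq\dim(G)+2$; by the very definition of $\rho(G,X)$ this says exactly that $S$ separates orbits in general position in $X$. Finally, since $\mathrm{char}(k)=0$, the remark after Theorem~\ref{thm:rational} (Lemma~2.1 of \cite{popov-vinberg}) states that a subset $S\subseteq k(X)^G$ separates orbits in general position if and only if $S$ generates $k(X)^G$ over $k$; applying this yields $k(S)=k(X)^G$, which is the assertion.

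I do not expect any genuine obstacle: all the substance, including the use of the faithfulness hypothesis, is already contained in Proposition~\ref{thm:gsd}. The only point that needs care is that the equivalence from \cite{popov-vinberg} genuinely requires characteristic zero — in positive characteristic one would only conclude that $k(X)^G$ is purely inseparable over $k(S)$, not that they are equal — which is precisely why the corollary is stated over a field of characteristic zero, and is consistent with the failure of such a bound exhibited in Proposition~\ref{prop:rhokadd}.
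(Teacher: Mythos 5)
Your proposal is correct and matches the paper's intent exactly: the proof block printed after the corollary is really the proof of Proposition~\ref{thm:gsd} (it ends by establishing that the subfields $k(X_I)^G$ with $|I|\leq\dim(G)+2$ separate orbits in general position), and the corollary is then the immediate consequence you describe, via the characteristic-zero equivalence between separating orbits in general position and generating $k(X)^G$ recorded before the proof of Theorem~\ref{thm:rational}. No gaps.
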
 

\begin{proof} For an irreducible $G$-variety $Y$, denote by $\mds(Y)$ the minimal dimension of a stabilizer of a point in $Y$. Since the dimension of stabilizers is upper 
semicontinuous, there is a dense open subset $U$ in $Y$ such that 
$\dim(G_y)=\mds(Y)$ for all $y\in U$. It is easy to see that if $\mds(Y)>0$, and $G$ acts faithfully on $Z$, then $\mds(Y\times Z)<\mds(Y)$. Also $\mds(Y)\leq\dim(G)$. 
Returning to our situation, we may assume $m\geq \dim(G)+2$. Clearly we have 
$\mds(X_D)=\mds (X)=0$, where $D=\{1,\ldots,d\}$ and $d=\dim(G)$. 
Let $U$ be a dense open $G$-stable subset of $X_D$ such that the stabilizer $G_x$ is finite for all $x\in U$. For $x\in U$ and $i=1,\ldots,m-d$, set 
\[Z_{x,i}:=\bigcup_{g\in G_x\setminus\{1\}}\{(x,x_{d+1},\ldots,x_m)\mid gx_{d+i}=
x_{d+i}\},\]
and $Z_i:=\bigcup_{x\in U}Z_{x,i}$. Its closure $\overline{Z_i}$ is a $G$-stable closed subset of $X$. We claim that $\overline{Z_i}$ is a proper subset of $X$. 
Indeed, set $G_0:=G\setminus\{1\}$, $Y:=\prod_{j=d+1}^mX_j$. Then  
$G_0\times U\times Y$ contains the proper closed subset 
$A:=\{(g,u,x_{d+1},\ldots,x_m)\mid gu=u, \ gx_{d+i}=x_{d+i}\}$. 
Write $\pi:G_0\times U\times Y\to U\times Y$ for the projection map. 
Then $\pi(A)=Z_i$, and being the image of a morphism, this shows that $Z_i$ is constructible. Assume that $Z_i$ is dense in $X$. Then it contains a dense open subset $B$ of $X$. There exists an $u\in U$ such that $B\cap \{u\}\times Y$ is non-empty. Then $B\cap \{u\}\times Y$ is open in $\{u\}\times Y$. However, 
$Z_i\cap \{u\}\times Y$ is a proper closed subset of $\{u\}\times Y$ by construction of $Z_i$ (and the assumption that $G$ acts faithfully on $X_{d+i}$). This contradiction shows that 
$\overline{Z_i}$ is a proper closed subset of $X$, and therefore the same 
holds for $\cup_{i=1}^{m-d}\overline{Z_i}$. 
Set $W:=(U\times Y)\setminus(\cup_{i=1}^{m-d}\overline{Z_i})$. It is a $G$-stable dense open subset of $X$. It contains the dense open subset $W':=W\cap\cup_{|I|\leq d+2}\pi_I^{-1}(U_I)$, where for each 
index subset $I$ of size $\leq d+2$ we take a dense open subset $U_I$ in $X_I$ such that 
$k(X_I)^G$ separates orbits in $U_I$.  

Take $x,y\in W'$, and suppose that $f(x_I)=f(y_I)$ holds for all $I$ with $D\subseteq I$, $|I|\leq d+2$ and all $f\in k(X_I)^G$. 
Then  $Gx_D=Gy_D$, so replacing 
$y$ by an appropriate element in its orbit, we may assume that $z=x_D=y_D$.  
Similarly, for all $i=d+1,\ldots,m$, there exists $g_i\in G_z$ with $g_ix_i=y_i$. By construction of $W$, $g_i\in G_z$ is unique. Furthermore, since 
$G\cdot (z,x_i,x_j)=G\cdot (z,y_i,y_j)$ for all $i,j\in\{d+1,\ldots,m\}$, it follows that 
$g_{d+1}=g_{d+2}=\ldots=g_m=:g$, hence $gx=y$. 
This means that the union of the subfields $k(X_I)^G$ where 
$D\subset I$ and $|I|=d+2$ separates orbits in general position in $X$. 
\end{proof} 


\section{Closed orbits in product varieties} \label{sec:closedorbit} 

In this section $G$ is a linear algebraic group over an algebraically closed field $k$. 
We keep the notation introduced in Section~\ref{sec:rational}, and consider products $X:=\prod_{i=1}^mX_i$ of affine $G$-varieties. 

We write $G_y$ for the stabilizer of a point $y$ in a $G$-variety $Y$. Recall that $\dim(G\cdot y)=\dim(G)-\dim(G_y)$. 
By a {\it closed orbit} we shall mean an orbit which is closed with respect to the Zariski topology of $Y$. 

\begin{definition}\label{def:delta} 
For a product $X=\prod_{i=1}^mX_i$ of affine $G$-varieties 
denote by $\delta(G,X)$ the minimal natural number $\delta$ having the following property: 
for an arbitrary $x\in X$ 
with closed $G$-orbit, 
there is a subset $I\subseteq \{1,\ldots,m\}$ with $|I|=\delta$, 
such that the orbit of 
$x_I$ is closed in $X_I$, and has the same dimension as the orbit of $x$ in $X$.  
Furthermore, define $\delta(G)$ as the supremum of 
$\delta(G,X)$, as $X$ ranges over all finite products of affine $G$-varieties. 
\end{definition} 

\begin{proposition}\label{prop:deltadef} 
Let $G$ and $X$ be as above. Suppose that for some $x\in X$ and 
$I\subseteq \{1,\ldots,m\}$, we have that $G\cdot x_I$ is closed in $X_I$, and 
$\dim(G\cdot x_I)=\dim(G\cdot x)$. Then 
for all $J\subseteq \{1,\ldots,m\}$ with  $J\supseteq I$, the orbit 
$G\cdot x_J$ is closed in $X_J$ and has the same dimension as $G\cdot x$. 
\end{proposition}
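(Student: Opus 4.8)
The plan is to factor the relevant projections through one another and use two basic facts about orbit maps: (i) a projection $X_J \to X_I$ for $I \subseteq J$ is $G$-equivariant, so it maps $G\cdot x_J$ onto $G\cdot x_I$; and (ii) the dimension of an orbit can only drop under an equivariant map, while the closure of an orbit is a union of orbits of strictly smaller dimension together with the orbit itself. Concretely, let $I \subseteq J \subseteq \{1,\ldots,m\}$. The equivariant surjections $X \to X_J \to X_I$ give surjective orbit maps $G\cdot x \to G\cdot x_J \to G\cdot x_I$, hence
\[
\dim(G\cdot x) \geq \dim(G\cdot x_J) \geq \dim(G\cdot x_I).
\]
By hypothesis the two ends are equal, so $\dim(G\cdot x_J) = \dim(G\cdot x)$ as well; this settles the dimension assertion and is the easy half.

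For closedness I would argue as follows. Suppose $G\cdot x_J$ is not closed in $X_J$. Then its closure $\overline{G\cdot x_J}$ contains a point $z$ lying in an orbit $G\cdot z$ with $\dim(G\cdot z) < \dim(G\cdot x_J) = \dim(G\cdot x)$ (a closed orbit of minimal dimension always exists in $\overline{G\cdot x_J}$, and it is strictly smaller since $G\cdot x_J$ itself is not closed). Now apply the projection $\pi \colon X_J \to X_I$. It is continuous and $G$-equivariant, so $\pi(\overline{G\cdot x_J}) \subseteq \overline{\pi(G\cdot x_J)} = \overline{G\cdot x_I} = G\cdot x_I$, the last equality because $G\cdot x_I$ is closed by hypothesis. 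Hence $\pi(z) \in G\cdot x_I$, which forces $\dim(G\cdot z) \geq \dim(G \cdot \pi(z)) = \dim(G\cdot x_I) = \dim(G\cdot x)$, contradicting $\dim(G\cdot z) < \dim(G\cdot x)$. Therefore $G\cdot x_J$ is closed in $X_J$.

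The main point to get right — and the step I expect to require the most care — is the assertion that if an orbit is not closed then its closure meets an orbit of strictly smaller dimension. This is standard for actions of algebraic groups on affine (more generally, arbitrary) varieties: the boundary $\overline{G\cdot x_J} \setminus G\cdot x_J$ is a nonempty closed $G$-stable set of dimension strictly less than $\dim(G\cdot x_J)$, being a proper closed subset of the irreducible variety $\overline{G\cdot x_J}$; it is a union of orbits, each of dimension at most $\dim \big(\overline{G\cdot x_J} \setminus G\cdot x_J\big) < \dim(G\cdot x_J)$. One can then take $z$ in this boundary. (One does not even need $G\cdot z$ itself to be closed, only that its dimension is strictly smaller; the contradiction above works verbatim with any such $z$.) Everything else is bookkeeping with equivariant maps and semicontinuity of fibre dimension, so I would keep the write-up short and lean on these cited generalities about orbit closures.
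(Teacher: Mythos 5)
Your proof is correct and takes essentially the same route as the paper's: the paper runs the identical contradiction via stabilizer dimensions, $\dim(G_y)>\dim(G_{x_J})=\dim(G_{x_I})=\dim(G_{y_I})\geq\dim(G_y)$, and gets $y_I\in\overline{G\cdot x_I}=G\cdot x_I$ by noting that $\overline{G\cdot x_I}\times X_{J\setminus I}$ is a closed $G$-stable subset of $X_J$ containing $x_J$ --- which is just your continuity argument for the projection in different words. No gaps.
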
 

\begin{proof} Obviously $I\subseteq J$ implies $G_{x_I}\supseteq G_{x_J}$, 
and $\dim(G\cdot x_I)\leq\dim(G\cdot x_J)$, 
so by the assumption on $\dim(G\cdot x_I)$ we get the equality 
$\dim(G_{x_I})=\dim(G_{x_J})$. Suppose that for some 
$J\supseteq I$, the orbit of $x_J$ is not closed. Take $y\in X_J$ contained 
in $\overline{G\cdot x_J}\setminus G\cdot x_J$. 
Then $\dim(G_y)>\dim(G_{x_J})$. 
Note that $y_I$ belongs to the orbit closure 
of $x_I$ (since ${\overline{Gx_I}}\times X_{J\setminus I}$ is a closed $G$-stable subset of $X_J$ containing $x_J$, hence $y$ as well).  
Since the orbit of $x_I$ is closed, $y_I$ belongs to the same orbit as $x_I$, implying $\dim(G_{y_I})=\dim(G_{x_I})$. 
Combining the above inequalities for dimensions of stabilizers  we get  
\[\dim(G_y)>\dim(G_{x_J})=\dim(G_{x_I})=\dim(G_{y_I})\geq \dim(G_y),\] 
a contradiction. This shows that 
$G\cdot x_J$ is closed for all $J\supseteq I$. 
\end{proof}

\begin{example}\label{example:non-affine} {\rm 
(i) A closed subset of a product of affine varieties may have no closed proper projection. Indeed, take $m$ dictinct complex numbers $\lambda_1,\ldots,\lambda_m$, set $X_i:=k\setminus\{\lambda_i\}$, and 
$\Delta:=\{(x,\ldots,x)\mid x\in k\}\cap \prod_{i=1}^mX_i$. 
Then $\Delta$ is closed in $X$. Take any $i\in\{1,\ldots,m\}$, and set 
$I:=\{1,\ldots,m\}\setminus\{i\}$.  
Then the complement of $\pi_I(\Delta)$ in its closure contains $(\lambda_i,\ldots,\lambda_i)$. 

(ii) Suppose that $X$ is a $G$-variety containing a dense orbit $G\cdot x$, and $X\setminus G\cdot x$ has $d$ irreducible components $Y_1,\ldots,Y_d$. Set $X_i:=X\setminus Y_i$, and consider 
$X:=\prod_{i=1}^dX_i$ with the diagonal $G$-action. Then $X$ contains the closed subset 
$\Delta:=\{(z,z,\ldots,z)\mid z\in\cap_{i=1}^d X_i\}$. Clearly, $\Delta=G\cdot \xi$, where 
$\xi=(x,\ldots,x)$. 
On the other hand, no proper projection of $\xi$ has closed orbit in $X$. Indeed, take 
$j\in\{1,\ldots,d\}$, and set $J:=\{1,\ldots,d\}\setminus\{j\}$. Then $Y_j\setminus\cup_{i\neq j}Y_i$ 
is non-emty, take an element $y$ of it. 
Then $(y,\ldots,y)$ belongs to the closure of the $G$-orbit of $\xi_J$, but $(y,\ldots,y)$ is not contained in the $G$-orbit of $\xi_J$. This construction shows that without restricting to affine varieties, one can not  expect finiteness of the number analogous to $\delta(G)$. Indeed, take for example the group 
$G:=\mc^{\times}\times\mc^{\times}$. It is easy to construct a $G$-variety $X$ with a dense orbit such that the number of irreducible components of the complement of the dense orbit in $X$ is arbitrarily large. (Start with the standard action of $G$ on $\mc^2$,  and blow up successively fixed points of $G$.)   }\end{example} 

Since any orbit of a unipotent group acting on an affine variety is closed (see for example 
\cite{borel}), $\delta(G)\leq \dim(G)$ for unipotent $G$. 

\begin{question} \label{conj:delta} 
Is $\delta(G)$ finite for any reductive algebraic group $G$? 
\end{question} 

We have an affirmative answer in certain special cases. 

\begin{proposition}\label{prop:torus} 
Let $T$ be an algebraic torus of rank $n$. Then 
$\delta(T)=2n$. 
\end{proposition}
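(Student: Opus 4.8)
The plan is to establish the two inequalities $\delta(T)\le 2n$ and $\delta(T)\ge 2n$ separately, with the upper bound relying on a Carath\'eodory-type argument in the character lattice and the lower bound on an explicit example. I would begin by recalling the combinatorial description of closed orbits for torus actions on affine varieties: if $T$ acts on an affine variety $Y$ and $y\in Y$, then the coordinate ring decomposes into weight spaces, and $G\cdot y$ is closed in $Y$ if and only if $0$ lies in the relative interior of the convex hull (in the real vector space $X^*(T)\otimes\mr$) of the set $S_y$ of weights occurring nontrivially at $y$ (this is the standard Hilbert--Mumford / Kempf criterion specialized to tori; one-parameter subgroups destabilizing $y$ correspond exactly to linear functionals nonnegative on $S_y$). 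Moreover $\dim(T\cdot y)=\dim(T)-\dim(T_y)$ equals the rank of the sublattice generated by $S_y$, equivalently the dimension of the linear span of $S_y$.

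For the upper bound $\delta(T)\le 2n$: given $x\in X=\prod_{i=1}^m X_i$ with $T\cdot x$ closed, let $S_i\subset X^*(T)$ be the set of weights occurring at $x_i$, so that $S:=\bigcup_i S_i$ is the weight set of $x$, and $0$ is in the relative interior of $\mathrm{conv}(S)$ inside its linear span $L=\mathrm{span}_\mr(S)$, with $r:=\dim_\mr L=\dim(T\cdot x)$. The goal is to find an index set $I$ of size $2n$ (or rather $\le 2r\le 2n$, then pad up to $2n$) with $0$ in the relative interior of $\mathrm{conv}\big(\bigcup_{i\in I}S_i\big)$ and with $\mathrm{span}_\mr\big(\bigcup_{i\in I}S_i\big)=L$; by Proposition~\ref{prop:deltadef} any larger $J\supseteq I$ works as well, so padding is legitimate. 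The key step is the following Carath\'eodory-type claim: if $0$ is in the relative interior of $\mathrm{conv}(S)$ and $\dim\mathrm{span}(S)=r$, one can choose at most $2r$ points of $S$ whose convex hull still contains $0$ in its relative interior and still spans $L$ — indeed $0$ is a positive convex combination of finitely many points of $S$; writing $0=\sum \lambda_j s_j$ with all $\lambda_j>0$, Carath\'eodory's theorem applied to the origin as an interior point of a full-dimensional-in-$L$ polytope lets us prune to $r+1$ points, and then we may need up to $r-1$ further points to restore the full span $L$, giving $\le 2r$ points total; one assigns each chosen weight to an index $i$ realizing it, obtaining $|I|\le 2r\le 2n$. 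I would spell out the pruning carefully (first secure the span with $r$ suitably chosen points, then add points to pull $0$ into the relative interior: since $0$ is already in the relative interior of $\mathrm{conv}(S)$, a result of convex geometry gives $0$ as a convex combination of at most $r+1$ points of any set whose hull contains it in the relative interior, but one must be slightly careful to keep the span — hence the factor $2$ rather than $r+1$).

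For the lower bound $\delta(T)\ge 2n$: I would exhibit, for every $m$, a product $X=\prod_{i=1}^m X_i$ of affine $T$-varieties and a point $x$ with closed orbit of dimension $n$ such that no index subset $I$ with $|I|<2n$ has $T\cdot x_I$ closed of the same dimension. The natural construction takes each $X_i=k$ (the affine line) with $T=(k^\times)^n$ acting through a character $\chi_i\in X^*(T)\cong\mz^n$, and $x_i=1$ for all $i$; then the weight set at $x$ is $\{\chi_1,\dots,\chi_m\}$, and $T\cdot x$ is closed of dimension $n$ iff the $\chi_i$ span $\mr^n$ and $0$ is in the relative interior (i.e., the interior) of their convex hull. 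Choosing the $\chi_i$ to be the vertices of a simplicial-type configuration for which every proper-enough subset fails — concretely, pick $2n$ characters forming $n$ pairs $\{e_j,-e_j\}$ (standard basis and its negatives), so $0$ is the barycenter and lies in the interior, but deleting any one character $e_j$ destroys the property that $0$ is interior (the hull collapses to a half-space in the $e_j$ direction), so every subset of size $<2n$ among these has $0$ only on the boundary of its hull, forcing $T\cdot x_I$ non-closed; taking $m=2n$ (or more, repeating) shows $\delta(T,X)=2n$. I would verify that with $x_i=1$ the stabilizer and orbit dimension computations match, and that Example/Proposition~\ref{prop:deltadef} is consistent with this.

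The main obstacle I anticipate is the precise bookkeeping in the upper bound: getting the constant exactly $2n$ rather than something like $n+1$ or $2n+1$ requires care, because one must simultaneously (a) keep $0$ in the \emph{relative} interior of the convex hull and (b) keep the linear span equal to $L$, and these two requirements interact — Carath\'eodory alone handles (a) with $\dim L+1$ points but may drop the span, while securing (b) naively costs another $\dim L$ points. The sharp value $2n$ comes from doing both at once (e.g.\ choosing a spanning set of $\le n$ weights, then expressing $0$ as a combination that reuses them and adds at most $n$ more), and matching this to the lower bound example confirms $2r$ is optimal; I expect this dovetailing of the two convex-geometric requirements to be the technical heart of the argument.
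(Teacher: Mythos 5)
Your overall route is the same as the paper's: reduce to linear $T$-actions, use the criterion that a torus orbit is closed iff $0$ lies in the (relative) interior of the convex hull of the occurring weights, extract at most $2n$ weights preserving both interiority and span, and for the lower bound take the point with all coordinates nonzero in $k^{2n}$ where $T=(k^\times)^n$ acts with the $2n$ weights $\pm e_1,\ldots,\pm e_n$ --- this is exactly the paper's example, and your verification of it is correct. The padding via Proposition~\ref{prop:deltadef} is also fine.

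The genuine gap is in the key convex-geometric step of the upper bound. What you need is precisely Steinitz's theorem (which the paper cites as Theorem 10.3 in the Eckhoff survey): if $0$ is an interior point of $\mathrm{conv}(S)$ in an $r$-dimensional space, then some $A\subseteq S$ with $|A|\le 2r$ already has $0$ in the interior of $\mathrm{conv}(A)$. Your proposed derivation --- Carath\'eodory to get $0$ in the hull of $r+1$ points, then adjoin up to $r-1$ further points to restore the span --- does not prove this: the adjoined points enter the convex combination with coefficient zero, so $0$ may end up on the boundary of the enlarged hull. Concretely, in $\mr^2$ with $S\supseteq\{(1,0),(-1,0),(0,1)\}$, Carath\'eodory may return $\{(1,0),(-1,0)\}$, and adjoining $(0,1)$ leaves $0$ on an edge of the resulting triangle, so the orbit of the corresponding $x_I$ is still not closed. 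You flag this interaction as ``the technical heart'' but the procedure you describe does not resolve it. Either cite Steinitz, or use the standard elementary argument: since $0$ is interior, $S$ positively spans the $r$-dimensional span $L$; pick a basis $b_1,\ldots,b_r$ of $L$ inside $S$, write $-(b_1+\cdots+b_r)$ as a nonnegative combination of at most $r$ elements $c_1,\ldots,c_r$ of $S$ by the conical Carath\'eodory theorem, and observe that $\{b_1,\ldots,b_r,c_1,\ldots,c_r\}$ spans $L$ and admits a linear relation with all coefficients of the $b_i$ strictly positive, which (after discarding the $c_j$ with zero coefficient) forces $0$ to be interior to its convex hull. With that repair the proof matches the paper's.
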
 

\begin{proof} First we show the inequality $\delta(T)\leq 2n$. 
Consider a product $X=\prod X_i$ of affine $T$-varieties. 
Without loss of generality we may assume that the 
$X_i$ (and hence $X$) are vector spaces on which $T$ acts linearly. 
We need to introduce some notation. Denote by ${\mathrm{char}}(T)$ the group of characters 
of $T$ written additively, and consider the $n$-dimensional $\mathbb{R}$-vector space ${\mathrm{char}}(T)\otimes \mathbb{R}$. For a vector $v\in V$, where $V$ is endowed with a linear action of $T$, write ${\mathrm{weight}}(v)$ for the set of 
$\lambda\in{\mathrm{char}}(T)$ such that the component of $v$ in the $\lambda$-weight space 
is non-zero, and denote by $\mathrm{supp}(v)$ the convex hull in 
${\mathrm{char}}(T)\otimes \mathbb{R}$  of the set ${\mathrm{weight}}(v)$. 
The dimension of the $T$-orbit of $v$ equals to the dimension of the subspace in 
${\mathrm{char}}(T)\otimes \mathbb{R}$ spanned by ${\mathrm{weight}}(v)$. 
It is known that the $T$-orbit of $v$ is closed if and only if $0$ is an interior point of ${\mathrm{supp}}(v)$ ({\it interior} is understood with respect to the topology of the affine subspace spanned by ${\mathrm{supp}}(v)$), see for example 
Proposition 6.15 in \cite{popov-vinberg}. 
Now assume that the orbit of $x$ is closed in 
$X$. Then $0$ is an interior point of ${\mathrm{supp}}(x)$, 
which is the convex hull of $\cup_{i=1}^m{\mathrm{weight}}(x_i)$. 
By a Carath\'eodory type theorem due to Steinitz 
(see Theorem 10.3 in \cite{steinitz}), there is a subset $A$ of at most $2n$ elements in the latter weight set whose convex hull contains $0$ as an interior point, such that $A$  spans the same affine subspace as ${\mathrm{supp}}(x)$ 
(and necessarily $A$ spans the same linear subspace as ${\mathrm{weight}}(x)$). 
Therefore there is a subset $I\subseteq\{1,\ldots,m\}$ with $|I|\leq 2n$ 
such that $A\subseteq {\mathrm{weight}}(x_I)$. 
Then ${\mathrm{weight}}(x_I)$ spans ${\mathrm{weight}}(x)$, implying that 
$\dim(T\cdot x_I)=\dim(T\cdot x)$, and zero is an interior point of 
${\mathrm{supp}}(x_I)$, implying that $T\cdot x_I$ is closed. 

Finally, to show the inequality $\delta(T)\geq 2n$ consider the 
action of $T=(k^{\times})^n$ on $k^{2n}$ given by 
\[(z_1,\ldots,z_n)\cdot (x_1,y_1,\ldots,x_n,y_n)=
(z_1x_1,z_1^{-1}y_1,\ldots,z_nx_n,z_n^{-1}y_n).\] 
Then the $T$-orbit of $x\in k^{2n}$ is closed and $n$-dimensional provided that all coordinates of 
$x$ are non-zero, but for any proper subset $I\subset\{1,\ldots,2n\}$, 
the orbit $T\cdot x_I$ is not closed or has smaller dimension than $n$. 
\end{proof} 

The number $\delta(G)$ is finite also for the complex general linear group $\gltwo$ 
of order two (and consequently for the complex special linear group 
$\sltwo$ of order two). To prove this we shall use the following generalization of the Hilbert-Mumford Criterion due to Birkes-Richardson 
(see \cite{birkes} or Theorem 6.9 in \cite{popov-vinberg}): 
Let $G$ be a complex reductive group acting on the affine variety $X$, and 
assume that the $G$-orbit of $x\in X$ is closed. 
Then $x$ belongs to the Zariski closure of the $G$-orbit of $y$ 
if and only if there exists a {\it one-parameter subgroup} 
(shortly $\onepsg$) $\rho:\mc^{\times}\to G$ 
(a homomorphism of algebraic groups) such that $\limzr y$ exists and belongs to the $G$-orbit of $x$. 

The defining representation of $\sltwo$ on $\mc^2$ induces a representation of $\sltwo$ on the space  
$\pol(\mc^2)$ of polynomial functions on $\mc^2$ in the standard way.  The degree $d$ homogeneous component $\pol_d(\mc^2)$ (called the space of  homogeneous binary forms of degree $d$) is $\sltwo$-invariant, and the spaces $\pol_d(\mc^2)$, $d=0,1,2,\ldots$, form a complete list of representatives of the isomorphism classes of irreducible polynomial $\sltwo$-modules. 
For a non-zero linear form $L\in\pol_1(\mc^2)$ and a non-zero element 
$f\in \pol_d(\mc^2)$ denote by $m_L(f)$ the maximal non-negative integer $n$ such that $L^n$ divides the binary form $f$. 

Any non-trivial $\onepsg$ in $\sltwo$ is of the following form: 
there is an element $g\in\sltwo$ and a positive integer $n$ such that 
\begin{equation}\label{eq:1psg}
\rho(z)=g\left(\begin{array}{cc}z^{-n} & 0   \\0 & z^n 
\end{array}\right)
g^{-1}.
\end{equation}
Note that if $\rho$ is of the form 
(\ref{eq:1psg}), then it 
stabilizes exactly two lines in 
$\pol_1(\mc^2)$, namely $\mc gx$ and $\mc gy$ (where $x,y\in\pol_d(\mc^2)$ stand for the usual coordinate functions on $\mc^2$). 
We shall call the non-zero scalar multiples of $gx$ or $gy$ the {\it eigenvectors} of $\rho$, and call the non-zero scalar multiples of $gx$ the 
{\it positive eigenvectors} of $\rho$ 
(note that the assumption $n>0$ in (\ref{eq:1psg2}) distinguishes between $x$ and $y$, 
and $\rho(z)\cdot gx=z^ngx$). 

\begin{theorem}\label{thm:sl2} 
We have the equality $\delta(\sltwo)=3$. 
\end{theorem}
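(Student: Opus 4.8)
The plan is to establish $\delta(\sltwo)\le 3$ and $\delta(\sltwo)\ge 3$ separately. For the upper bound I would first reduce to the case where each $X_i$ is a rational $\sltwo$-module: any affine $G$-variety embeds $G$-equivariantly as a closed subvariety of a $G$-module, and neither closedness of an orbit nor its dimension changes when one passes to a closed $G$-stable overvariety or takes projections. Since $\sltwo$ is linearly reductive, $X_i$ is then a direct sum of spaces $\pol_d(\mc^2)$ of binary forms, so a coordinate $x_i$ of $x\in X$ is just a tuple of binary forms, its homogeneous components. Now suppose $G\cdot x$ is closed. A closed orbit is an affine variety; since $\sltwo/B\cong\mathbb{P}^1$ is complete and $\sltwo/U\cong\mc^2\setminus\{0\}$ is not affine (with $B$ a Borel and $U$ its unipotent radical), the identity component of $G_x$ must be trivial, a maximal torus, or all of $\sltwo$, so $\dim(G\cdot x)\in\{0,2,3\}$.

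The cases $\dim(G\cdot x)\in\{0,2\}$ are handled with a single coordinate. If $\dim(G\cdot x)=0$ then $x$, hence every $x_i$, is $G$-fixed and any singleton works. If $\dim(G\cdot x)=2$ then $G_x$ contains a maximal torus $T$; each $G_{x_i}\supseteq G_x$ is a closed subgroup of $\sltwo$ containing $T$, hence lies in $\{T,N(T),\sltwo\}$ ($N(T)$ the normalizer), and since $\bigcap_iG_{x_i}=G_x\in\{T,N(T)\}$ some $G_{x_i}$ equals $G_x$. For that index $\dim(G\cdot x_i)=\dim(G\cdot x)$, and closedness of $G\cdot x_i$ follows from the Birkes--Richardson criterion: the components of $x_i$ are $T$-fixed, i.e.\ scalar multiples of $x^{d/2}y^{d/2}$; a $\onepsg$ $\rho$ with $\lim_{z\to0}\rho(z)\cdot x_i$ existing must have its positive eigenline among the two eigenlines of $T$, and then either $\mathrm{im}(\rho)=T$ and the limit equals $x_i$, or the limit still has a component with a $2$-dimensional orbit; as a boundary orbit of $G\cdot x_i$ would be $0$-dimensional, no degeneration can occur.

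The case $\dim(G\cdot x)=3$ (i.e.\ $G_x$ finite) is the heart of the matter. The key computation is that for a binary $d$-form $f$ and a nontrivial $\onepsg$ $\rho$ with positive eigenline $\ell^+$, $\lim_{z\to0}\rho(z)\cdot f$ exists if and only if $m_{\ell^+}(f)\ge d/2$, and this depends on $\rho$ only through $\ell^+$. Thus, after discarding any $G$-fixed coordinates, the finite set $C_i:=\{\ell\in\mathbb{P}^1 \mid m_\ell(c)\ge\deg(c)/2\text{ for every homogeneous component }c\text{ of }x_i\}$ is exactly the set of attracting lines along which $x_i$ does not degenerate, and a form of degree $d$ has at most two roots of multiplicity $\ge d/2$, so $|C_i|\le 2$. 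I claim $\bigcap_{i=1}^m C_i=\emptyset$: if $\ell_0$ lay in the intersection, a nontrivial $\rho$ with positive eigenline $\ell_0$ would have $y:=\lim_{z\to0}\rho(z)\cdot x$ existing and, by closedness, $y\in G\cdot x$; but $\rho(w)\cdot y=\lim_{z\to0}\rho(wz)\cdot x=y$ for all $w\in\mc^\times$, so $\mathrm{im}(\rho)\subseteq G_y$ is finite, forcing $\rho$ trivial. Conversely, Birkes--Richardson shows that whenever no nontrivial $\onepsg$ has a limit on $x_I$, the orbit $G\cdot x_I$ is closed (a boundary point would lie on a closed orbit reachable from $x_I$ by a $\onepsg$), and then $G_{x_I}$ contains neither a torus nor a unipotent subgroup, hence is finite and $\dim(G\cdot x_I)=3$. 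So it suffices to find $I$ with $|I|\le3$ and $\bigcap_{i\in I}C_i=\emptyset$, which is elementary given $|C_i|\le2$: if some $C_i$ is empty take $I=\{i\}$; otherwise fix $i_1$, and for each of the at most two points $p\in C_{i_1}$ choose (using $\bigcap_kC_k=\emptyset$) an index $i_p$ with $p\notin C_{i_p}$; then $I=\{i_1\}\cup\{i_p\mid p\in C_{i_1}\}$ has $|I|\le3$ and $\bigcap_{i\in I}C_i\subseteq C_{i_1}$ contains none of its own points, hence is empty. Enlarging $I$ if needed by Proposition~\ref{prop:deltadef} gives $\delta(\sltwo,X)\le3$.

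For the lower bound I would take three distinct points $a,b,c\in\mathbb{P}^1$ with linear forms $\ell_a,\ell_b,\ell_c$ vanishing there, set $X_1=X_2=X_3=\pol_2(\mc^2)$ and $x=(\ell_a\ell_b,\ \ell_b\ell_c,\ \ell_c\ell_a)$. Then $G_x=\{\pm I\}$, so $\dim(G\cdot x)=3$; here $C_1=\{a,b\}$, $C_2=\{b,c\}$, $C_3=\{c,a\}$, so $\bigcap_iC_i=\emptyset$ and $G\cdot x$ is closed by the criterion above. But each $C_i\cap C_j$ is a single point, and a $\onepsg$ with that point as positive eigenline sends $x_{\{i,j\}}$ to a limit whose two components are both proportional to a nondegenerate quadratic fixed by a $1$-dimensional torus, hence of orbit dimension $<3$ and not in $G\cdot x_{\{i,j\}}$; so $G\cdot x_{\{i,j\}}$ is not closed, while each singleton orbit $G\cdot x_i$ is closed but only $2$-dimensional. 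Hence no subset $I$ with $|I|\le2$ has $G\cdot x_I$ closed of dimension $3$, and $\delta(\sltwo)\ge3$. The main obstacle is the dimension-$3$ case, and within it the step of converting the Hilbert--Mumford/Birkes--Richardson theory into the clean equivalence ``$G\cdot x_I$ closed of dimension $3$ $\iff$ $\bigcap_{i\in I}C_i=\emptyset$''; once that is in place, the value $3$ is forced by the triviality $|C_i|\le2$. A more routine but fiddly point is the closedness verification in the dimension-$2$ case and the computations of $G_x$ and the $C_i$ in the example.
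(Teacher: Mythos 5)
Your proof is correct in substance, and it takes a genuinely different route from the paper's. The paper reduces to irreducible modules and then runs a case analysis on how many ``roots of high multiplicity'' each individual component has (none, one, two), padding the index set with extra components to fix the stabilizer dimension; its Case 1 (a common root of high multiplicity, forcing every component to be a power of $L_1L_2$) is exactly your dimension-$2$ branch. You instead stratify by $\dim(G\cdot x)\in\{0,2,3\}$ via reductivity of stabilizers of points on closed orbits, and in the main ($3$-dimensional) case you package the whole analysis into the sets $C_i\subseteq\mathbb{P}^1$ of roots of high multiplicity: closedness of $G\cdot x$ with finite stabilizer is equivalent to $\bigcap_i C_i=\emptyset$, and since $|C_i|\le 2$ an empty intersection is always witnessed by at most three of the $C_i$. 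This is cleaner and makes transparent why the answer is $3=1+2$ (one index plus one index per excluded root), whereas the paper's bound emerges from the case-by-case bookkeeping; the analytic input (the Birkes--Richardson criterion and the computation that $\lim_{z\to 0}\rho(z)\cdot f$ exists iff the positive eigenvector of $\rho$ has multiplicity $\ge d/2$ in each homogeneous component) is identical in both arguments. Your lower-bound example is the paper's example $(xy,\,x(x+y),\,y(x+y))$ up to relabelling.

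One small repair is needed in your dimension-$2$ case: it is not true that a closed subgroup of $\sltwo$ containing a maximal torus $T$ lies in $\{T,N(T),\sltwo\}$ --- the two Borel subgroups containing $T$ are counterexamples. The conclusion you want is still correct, but for the right reason: since $T\subseteq G_x\subseteq G_{x_i}$, each $x_i$ is a zero-weight vector, i.e.\ every homogeneous component is a multiple of $(xy)^{d/2}$, and no nontrivial unipotent element fixes such a nonzero vector unless it is $G$-fixed; hence $G_{x_i}^{\circ}\in\{T,\sltwo\}$ and so $G_{x_i}\in\{T,N(T),\sltwo\}$, after which your pigeonhole step goes through. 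Similarly, your exclusion of $G_x^{\circ}$ unipotent or Borel is most cleanly quoted as Matsushima's criterion (a closed orbit in an affine variety is affine, so the stabilizer is reductive); the remark that $\sltwo/U$ is not affine needs the extra observation that a finite quotient of it cannot be affine either. Neither point affects the structure of the argument.
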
 

\begin{proof} Let $X_i$ be affine $\sltwo$-varieties, $i=1,\ldots,m$, where $m>3$, and assume that the orbit of $f\in X=\prod_{i=1}^mX_i$ is closed. 
We claim that there is a subset $I\subseteq \{1,\ldots,m\}$ with $|I|\leq 3$ such that the orbit of $f_I$ is closed and has the same dimension as the orbit of $f$. 
Since $X_i$ can be equivariantly embedded into a vector space on which 
$\sltwo$ acts linearly, we may assume at the outset that the $X_i$ are vector spaces with a linear action of $\sltwo$. 
Moreover, by Proposition~\ref{prop:deltadef} it is sufficient to prove our statement in the case when each $X_i$ is an irreducible non-trivial $\sltwo$-module, and the components $f_i$ are all non-zero. 
So $X_i=\pol_{d_i}(\mc^2)$ is the space of homogeneous binary forms of degree 
$d_i>0$, and $f_i\in\pol_{d_i}(\mc^2)$. The following terminology will be used both in the present proof and in the proof of Theorem~\ref{thm:gl2}: 

\begin{definition} \label{def:comroothighmult} 
We say that the non-zero linear form $L$ is a common root of high multiplicity of some  binary forms $b_i\in\pol_{d_i}(\mc^2)$, $i$ ranging over some index set, if $m_L(b_i)\geq d_i/2$ for all $i$.  
\end{definition} 

\noindent Note that $\lim_{z\to 0}\rho(z)f$ exists for some non-trivial $\onepsg$ $\rho$ 
if and only if $L$ is a common root of high multiplicity of the $f_i$ ($i=1,\ldots,m$), where $L$ is the positive eigenvector of $\rho$. Furthermore, 
if $f_j$ ($j\in J$)  have no common root with high multiplicity for some index subset 
$J\subseteq \{1,\ldots,m\}$, then the orbit of $f_J$ is closed in $X_J$ by the criterion in \cite{birkes} mentioned above. 

Assume that the components of $f$ have a common root $L$ of high multiplicity.  
Let $\rho$ be a one-parameter subgroup of $\sltwo$,  such that $L$ is the  positive eigenvector of $\rho$. 
Then $\lim_{z\to 0}\rho(z)\cdot f_i=h_i$ exists and equals zero or 
$h_i=c_i(LL')^{e_i}$, where $c_i\in\mc$, $L'$ is a linear form independent of $L$, and $d_i=2e_i$. By assumption $h:=\lim_{z\to 0}\rho(z)\cdot f$ belongs to the orbit of $f$. 
It follows that there are two non-proportional linear forms $L_1,L_2$ such that $f_i=c_i(L_1L_2)^{e_i}$, with $0\neq c_i\in\mc$. 
Clearly $I=\{1\}$ satisfies the requirements in this case. 

Now assume that the components of $f$ have no common root of high multiplicity.  Note that a binary form of degree $d$ has at most one root of multiplicity 
$\geq d/2$, unless $d$ is even, and the binary form is a power of the product of 
two linear forms (when it has exactly two roots up to non-zero scalar multiple) of multiplicity $d/2$. 

If there is a component, say $f_1$, that has no root of high multiplicity, then 
the orbit of $f_J$  is closed for all $J$ containing $1$. 
Since $\dim(\sltwo)=3$, one can select two components, say $f_2,f_3$ such that $\dim(\sltwo_{(f_1,f_2,f_3)})=\dim(\sltwo_f)$. 
Then $I=\{1,2,3\}$ has the desired properties. 

If all  components have a root with high multiplicity, and 
there are two components, say $f_1$ and $f_2$ that have no common root of high multiplicity, then $\dim(\sltwo_{(f_1,f_2)})\leq 1$, and selecting a third component if necessary, say $f_3$, we may achieve that 
$\dim(\sltwo_{(f_1,f_2,f_3)})=\dim(\sltwo_f)$. 
Then $I=\{1,2,3\}$ satisfies the requirements. 

The only missing case is that $d_i=2e_i$ is even for all $i=1,\ldots,m$, 
and there are linear forms $L_1,L_2,L_3$, pairwise non-proportional, such that each component of $f$ is a non-zero scalar multiple of a power of $L_1L_2$ or 
$L_1L_3$ or $L_2L_3$ (all occuring). Without loss of generality, we may assume 
that $f_1=(L_1L_2)^{e_1}$, $F_2=(L_1L_3)^{e_2}$, and $f_3=(L_2L_3)^{e_3}$. 
Then for $I=\{1,2,3\}$, we have $\sltwo_{f_I}$ is finite, and the orbit of $f_I$ is closed. 

Finally, to show the reverse inequality $\delta(\sltwo)\geq 3$, consider 
$X_1\times X_2\times X_3$, where $X_i\cong\pol_2(\mc^2)$ for $i=1,2,3$, and 
the point $f=(xy,x(x+y),y(x+y))$. Then none of the orbits of $(f_1,f_2)$, 
$(f_1,f_3)$, $(f_2,f_3)$ are closed. (The orbits of $f_1$, $f_2$, and $f_3$ are all closed, but their dimension is less than the dimension of the orbit of $f$.)  
\end{proof} 

Recall that an irreducible rational $\gltwo$-module is isomorphic to 
$\pol_d(\mc^2)\otimes \mc_{\det^e}$ for some $d\in\mn_0$ and $e\in\mz$, where $\mc_{\det^e}$ denotes the one-dimensional vector space $\mc$ on which $g\in\gltwo$ acts by multiplication by the scalar $\det(g)^e$. We shall identify the underlying vector space of $\pol_d(\mc^2)\otimes \mc_{\det^e}$ with 
$\pol_d(\mc^2)$ in the obvious way and call its elements binary forms, 
and keep the terminology and notation introduced for binary forms in the case of $\sltwo$.  

Up to conjugacy in $\gltwo$, a $\onepsg$ $\rho$ in $\gltwo$ is of the form 
\begin{equation}\label{eq:1psg2}
\rho(z)=\left(\begin{array}{cc}z^{r-n} & 0   \\0 & z^{r+n} 
\end{array}\right), 
\end{equation}
where $2r\in\mz$, $2n\in\mn_0$, and $r-n\in\mz$. 
In fact, for any $\onepsg$ $\tau$ in $\gltwo$ there is an element $g\in\sltwo$ such that 
$g^{-1}\tau(z)g$ is of the form (\ref{eq:1psg2}), and  
we shall use the notation 
$r(\tau):=r$ and $n(\tau):=n$. 
When $n(\tau)\neq 0$, the $\onepsg$ $\tau$ stabilizes exactly two lines in 
$\pol_1(\mc^2)$, namely $\mc gx$ and $\mc gy$. We shall call the non-zero scalar multiples of $gx$ or $gy$ the {\it eigenvectors} of $\tau$, and call the non-zero scalar multiples of $gx$ the 
{\it positive eigenvectors} of $\tau$ 
(note that the assumption $n>0$ in (\ref{eq:1psg2}) distinguishes between $x$ and $y$). 

\begin{theorem}\label{thm:gl2} 
We have the equality $\delta(\gltwo)=5$. 
\end{theorem}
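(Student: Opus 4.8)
The plan is to mirror the structure of the proof of Theorem~\ref{thm:sl2}, upgrading the bound from $3$ to $5$ by accounting for the extra structure of $\gltwo$. As before, we may reduce via Proposition~\ref{prop:deltadef} to the case where each $X_i=\pol_{d_i}(\mc^2)\otimes\mc_{\det^{e_i}}$ is an irreducible nontrivial rational $\gltwo$-module and each component $f_i$ of the point $f\in X=\prod_{i=1}^m X_i$ with closed orbit is nonzero. The first step is to record the analogue of the Birkes--Richardson translation: using the form (\ref{eq:1psg2}) of a $\onepsg$ $\rho$ with $n(\rho)\neq 0$ and positive eigenvector $L$, one computes that $\lim_{z\to0}\rho(z)\cdot f_i$ exists for all $i$ precisely when $L$ is a common root of high multiplicity of the $f_i$ \emph{and} a certain inequality on the $r(\rho),n(\rho)$ against the data $(d_i,e_i)$ is satisfiable — here the determinant twists contribute a linear constraint that was invisible in the $\sltwo$ case. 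Dually, if for some index set $J$ the forms $f_j$ ($j\in J$) admit no destabilizing $\onepsg$ of either sign, then $G\cdot f_J$ is closed in $X_J$; one must be careful that now \emph{both} the $\onepsg$ with positive eigenvector $L$ and the one with $L$ as negative eigenvector can fail to destabilize for independent reasons, so ``closedness of $f_J$'' requires ruling out two families rather than the single symmetric family in the $\sltwo$ argument.

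The second step is the case analysis on common roots of high multiplicity, exactly paralleling the four cases in the proof of Theorem~\ref{thm:sl2}. If the components of $f$ have a common root $L$ of high multiplicity, then (since $G\cdot f$ is closed) the limit $h=\lim_{z\to0}\rho(z)f$ lies in $G\cdot f$ for a suitable $\rho$, forcing each $f_i$ to be $c_i(L_1L_2)^{e_i}$ for two fixed non-proportional linear forms; but now, because of the determinant twists, one scalar ray's worth of $\onepsg$'s no longer suffices, and one may need up to a bounded number of indices to pin down both $L_1,L_2$ and to match the dimension $\dim(\gltwo_f)$ — here $\dim(\gltwo)=4$ rather than $3$ already pushes the count up by one, and the extra central torus direction accounts for the jump to $5$. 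If the components have no common root of high multiplicity, one splits according to whether some $f_1$ has no root of high multiplicity at all (then $G\cdot f_J$ is closed for all $J\ni 1$, and one adjoins enough further components — now up to four more — to realize $\dim(\gltwo_{f_I})=\dim(\gltwo_f)$), or whether two components $f_1,f_2$ already have no common high-multiplicity root (then $\dim(\gltwo_{(f_1,f_2)})$ is small and again a bounded number of extra components finishes it), or finally the ``triangle'' case where all $d_i$ are even and the components are powers of $L_1L_2,\,L_1L_3,\,L_2L_3$ with all three occurring; picking one of each type gives an $I$ with $\gltwo_{f_I}$ finite and orbit closed, and here one checks $|I|\le 5$ suffices to also capture the remaining central-torus direction of the stabilizer computation.

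The third step is the lower bound $\delta(\gltwo)\ge 5$: exhibit an explicit product $X=\prod_{i=1}^5 X_i$ and a point $f$ with closed $5$-dimensional… (rather, closed orbit of the appropriate dimension) such that no projection to four of the factors has a closed orbit of the same dimension. The natural candidate is to take the $\sltwo$-example $(xy,\,x(x+y),\,y(x+y))$ in three $\pol_2$-factors, which already shows the three ``triangle'' coordinates are all needed to make the orbit closed, and to append two more factors carrying suitable determinant twists so that two further indices are forced in order to cut the stabilizer down from a positive-dimensional central subgroup to the correct dimension; one then verifies by the Hilbert--Mumford/Birkes--Richardson criterion that every $4$-element subset $I$ either leaves the orbit non-closed or drops its dimension.

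I expect the main obstacle to be the bookkeeping in the ``common root of high multiplicity'' cases: in the $\sltwo$ proof the crucial simplification was that $\dim(\sltwo)=3$ and that a destabilizing $\onepsg$ is essentially unique up to its finite data, so one never needed more than three indices. For $\gltwo$ the central torus introduces an extra degree of freedom both in the space of $\onepsg$'s (the parameter $r(\rho)$) and in the stabilizer dimension count, and the determinant twists $e_i$ enter the ``limit exists'' condition linearly; getting the constant to be exactly $5$ — not $4$ and not $6$ — requires carefully tracking how many indices are genuinely needed to simultaneously (a) keep $G\cdot f_I$ closed and (b) match $\dim(G\cdot f_I)=\dim(G\cdot f)$, and showing that the worst case (the triangle configuration combined with a determinant-twist obstruction) needs precisely five. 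Verifying the sharpness example is the second, more routine, potential sticking point.
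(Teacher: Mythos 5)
Your lower-bound construction is essentially the paper's: the paper takes $V_1\cong\pol_0(\mc^2)\otimes\det$, $V_2\cong\pol_0(\mc^2)\otimes\det^{-1}$, $V_3\oplus V_4\oplus V_5\cong(\pol_2(\mc^2))^{\oplus 3}$ and the point $(1,1,xy,x(x+y),y(x+y))$, i.e.\ exactly your plan of appending two oppositely det-twisted trivial $\sltwo$-factors to the $\sltwo$ triangle example. That part is fine.

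The upper bound, however, has a genuine gap: you have announced a plan rather than a proof, and the plan as stated would not go through. First, the organizing principle of the paper's argument is not a direct parallel of the four $\sltwo$ cases; it is the decomposition $V=V_+\oplus V_0\oplus V_-$ according to the sign of $2e_i-d_i$, the quantity $\mu_L(v_j)=\frac{d_j-2m_L(v_j)}{2e_j-d_j}$, and a ``$J$ rules out $L$'' relation, with the top-level dichotomy being whether or not $\lim_{z\to 0}\rho(z)\cdot v$ exists for some nontrivial one-parameter subgroup of the \emph{whole} point $v$. Without the $\pm$ decomposition you cannot even dispose of the central torus (the paper uses the scalar $\onepsg$ to force $v=v_0$ when $V_+$ or $V_-$ vanishes, reducing to Theorem~\ref{thm:sl2}). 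Second, your specific claim in the ``common root of high multiplicity'' case --- that closedness of $G\cdot f$ forces each $f_i=c_i(L_1L_2)^{e_i}$ --- is the $\sltwo$ conclusion and is false for $\gltwo$: because of the determinant twists, a limit along $\rho$ can exist with the components being arbitrary monomials $c_ix^{a_i}y^{d_i-a_i}$ subject only to $\mu_x(v_i)=r(\tau)/n(\tau)$ for $i\in M_+\cup M_-$; this is exactly the paper's Case~2, which requires its own two sub-cases. Third, the heuristic ``$\dim(\gltwo)=4$ pushes the count up by one and the central direction gives the fifth'' does not identify the actual extremal configuration: in the paper the bound $5$ is forced by the case where all components in $V_+\cup V_-$ have degree zero, so that the $V_0$ part alone needs $3$ indices by Theorem~\ref{thm:sl2} and one index each from $V_+$ and $V_-$ is needed to kill the central destabilizing $\onepsg$. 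Establishing that \emph{every} other configuration also needs at most $5$ is the content of roughly a dozen sub-cases (1.1--1.10, 2.1, 2.2), each tracking both ``which linear forms remain un-ruled-out'' and $\dim(G_{v_J})$; none of this bookkeeping is present in your proposal, and you acknowledge as much.
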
 

\begin{proof} Write $G$ for $\gltwo$. Similarly to the proof of Theorem~\ref{thm:sl2}, it is sufficient to show that if $V_i$ 
($i\in M$) is a finite set of irreducible rational $\gltwo$-modules, 
and the $G$-orbit of $v\in V:=\bigoplus_{i\in M}V_i$ is closed, 
then there is a subset $I\subseteq M$ with $|I|\leq 5$ such that 
the orbit of $v_I$ in $V_I=\bigoplus_{i\in I}V_i$ is closed, and $\dim(G\cdot v_I)=\dim(G\cdot v)$. Clearly it is sufficient to deal with the case when  $v_i\neq 0$ for all $i\in M$, and that none of the $V_i$ is the trivial module, so we shall assume that this is the case. 
The irreducible $\gltwo$-module $V_i$ is isomorphic to 
$\pol_{d_i}(\mc^2)\otimes\mc_{\det^{e_i}}$. 
There is a decomposition $V=V_+\oplus V_0\oplus V_-$, where 
\[V_+=\bigoplus_{2e_i-d_i>0}V_i,\quad V_0=\bigoplus_{2e_i-d_i=0}V_i,\quad 
V_-=\bigoplus_{2e_i-d_i<0}V_i.\] 
For $v\in V$ we write 
$v=v_++v_0+v_-$, where $v_+\in V_+$, $v_0\in V_0$, $v_-\in V_-$. 
For a subset $J\subseteq M$, set 
$J_+:=\{j\in J\mid 2e_j-d_j>0\}$, 
$J_-:=\{j\in J\mid 2e_j-d_j<0\}$, and 
$J_0:=\{j\in J\mid 2e_j-d_j=0\}$. 
We have $\gltwo=\mc^{\times}\cdot \sltwo$, where $\mc^{\times}$ is identified with the subgroup of scalar matrices. Since scalar matrices act on $V_0$ trivially, the $\gltwo$-orbits and $\sltwo$-orbits on $V_0$ coincide. Thus by  Theorem~\ref{thm:sl2} it is sufficient to deal with the case when $v\notin V_0$. 
If $V_-=0$, then with $\rho(z)=
\left(\begin{array}{cc}z & 0 \\0 & z\end{array}\right)$ we have 
$\limzr \cdot v_+=0$, hence $\limzr\cdot v=v_0$. Since the orbit of $v$ is closed, it follows that $v=v_0$. Similarly, $V_+=0$ implies $v=v_0$. 
Therefore we may assume that both $V_+$ and $V_-$ are non-zero. 
This implies that if $\limzr \cdot v$ exists for some $\rho$ as in 
(\ref{eq:1psg2}), then $n\neq 0$ (and hence $n>0$). 

With $\rho(z)$ as in (\ref{eq:1psg2}) and for 
$x^iy^{d-i}\in\pol_d(\mc^2)\otimes\mc_{\det^e}$ 
we have 
\begin{equation} \label{eq:rhoxy} 
\rho(z)\cdot x^iy^{d-i}=z^{r(2e-d)+n(2i-d)}x^iy^{d-i},\end{equation}  
hence $\limzr\cdot f$ exists for some $f\in \pol_d(\mc^2)\otimes\mc_{\det^e}$ 
and $\rho$ as in (\ref{eq:1psg2}) if and only if 
\[r(2e-d)+n(2m_x(f)-d)\geq 0.\]  
For $j\in M_+\cup M_-$ and a linear form $L$ introduce the notation 
$\mu_L(v_j):=\frac{d_j-2m_L(v_j)}{2e_j-d_j}$. 
Consequently, if $J$ is a subset of $M$, and $\rho$ is a $\onepsg$ with $n(\rho)\neq 0$ and positive eigenvector $L$, then $\limzr \cdot v_J$ exists if and only if 
we have 
\begin{eqnarray}\label{eq:r/n}
\begin{cases} r(\rho)/n(\rho)\geq \mu_L(v_j)\mbox{ for all }j\in J_+
\\  r(\rho)/n(\rho)\leq \mu_L(v_j)\mbox{ for all }j\in J_- 
\\  m_L(v_j)\geq d_j/2 \mbox{ for all } j\in J_0\end{cases}  
\end{eqnarray}
It follows that if $v_J$ has a non-zero component both in $V_+$ and 
in $V_-$, then 
$\limzr\cdot v_J$ does not exist for all non-trivial $\onepsg$ if and only if for all linear forms $L\in\pol_1(\mc^2)$ we have 
\begin{eqnarray}\label{eq:maxmin}\max\{\mu_L(v_j)\mid j\in J_+\}> 
\min\{\mu_L(v_j)\mid j\in J_-\} \\ \notag
\mbox{ or }m_L(v_j)<d_j/2\mbox{ for some }j\in J_0.
\end{eqnarray} 
Let us introduce the following ad hoc terminology: if for some $J\subseteq M$ and a linear form $L$ condition (\ref{eq:maxmin}) holds, then we say that 
$J$ {\it rules out} $L$. Observe that if $J$ rules out $L$, then there exists an at most two element subset of $J$ ruling out $L$, and every index set containing $J$ rules out $L$.  
Observe also that $\max\{\mu_L(v_j)\mid j\in J_+\}$ is positive if $m_L(v_j)<d_j/2$ for some $j\in J_+$, and $\min\{\mu_L(v_j)\mid j\in J_-\}$ is negative if  $m_L(v_j)<d_j/2$ for some $j\in J_-$. Consequently, $J$ rules out $L$ unless $L$ is a common root with high multiplicity of the $v_j$ with $j\in J_+\cup J_0$, or $L$ is a common root of high multiplicity of the $v_j$ with $j\in J_-\cup J_0$ 
(see Definition~\ref{def:comroothighmult}). 

{\it Case 1: Assume that for all non-trivial $\onepsg$ $\rho$, 
$\limzr\cdot v$ does not exist.} That is, $M$ rules out all linear forms $L$.  
We shall construct a subset $J\subseteq M$ of size $\leq 5-\dim(G_{v_J})$ ruling out all $L$, and with both $J_+$ and $J_-$ non-empty. Then clearly there exists an index subset $I\supseteq J$ of size $\leq 5$, such that 
$\dim(G_{v_I})=\dim(G_v)$. Moreover, $I$ obviously rules out all linear forms $L$, hence by the Birkes-Richardson criterion cited above, the orbit of $v_I$ is closed.  
The rough idea to carry out this plan is the following: a binary form of positive degree has at most two roots with high multiplicity (from now on we identify a linear form and its non-zero scalar multiples). 
So if $v_1\in V_+$ and $v_2\in V_-$ have positive degrees, then 
$\{1,2\}$ rules out all linear forms with the possible exception of at most four linear forms. So adding at most $4\cdot 2=8$ indices to $\{1,2\}$ we get a set $J$ ruling out all linear forms. Adding at most $\dim(G)-1$ more indices we get 
an $I$ satisfying also the desired condition on the dimension of $G\cdot v_I$. 
This shows the existence of a set $I$ of size $\leq 13$ satisfying all the other requirements. To get the desired $I$ with size $\leq 5$ needs a bit lengthy 
case-by-case analysis, and a more careful look at stabilizers. 

For a non-zero $f\in \pol_d(\mc^2)\otimes \mc_{\det^e}$ where $2e-d\neq 0$, one of the following options holds: 
\begin{itemize}
\item[(a)] $f$ has no root with high multiplicity (this forces $d\geq 3$); 
then $G_f$ is finite. 
\item[(b)] $f$ has exactly one root with high multiplicity (this forces 
$d\geq 1$); then $\dim(G_f)\leq 1$, unless $f=L^d$ for some linear form $L$, when $\dim(G_f)=2$. 
\item[(c)] $f$ has exactly two roots $L_1,L_2$ with high multiplicity (i.e. 
$f$ is a non-zero scalar multiple of $(L_1L_2)^h$, where $h\in\mn$ and $d=2h$); then $\dim(G_f)=1$. 
\item[(d)] $d$ (the degree of $f$) equals zero (hence any non-zero linear form $L$ is a root of $L$ with high multiplicity); then $(G_f)^{\circ}=\sltwo$. 
\end{itemize} 

{\it Case 1.1: There is a component $v_1$ of $v_+$ and $v_2$ of $v_-$ having no root of high multiplicity.} 
By the above considerations, $I=\{1,2\}$ obviously has the desired properties. 

{\it Case 1.2: There is a component $v_1$ of $v_+$ that has no root of high multiplicity, and there is a component $v_2$ of $v_-$ that has exactly one root $L$ of high multiplicity.} 
Then $\{1,2\}$ rules out all linear forms except possibly $L$. 
There are at most two indices, say $a,b$, such that $\{a,b\}$ rules out $L$. 
Then $I=\{1,2\}\cup\{a,b\}$ rules out all linear forms and $v_I$ has finite stabilizer. 

{\it Case 1.2': Interchange $+$ and $-$ in Case 1.2.} 

From now on we shall automatically assume in each case that we are not in any of the cases 
considered before, or in their pair obtained by interchanging $+$ and $-$. 

{\it Case 1.3: There is a component $v_1$ of $v_+$ that has no root with high multiplicity, and there is a component $v_2$ of $v_-$ having exactly two roots 
$L_1$ and $L_2$ with high multiplicity.} 
Then $\mu_{L}(v_1)>0$ and $\mu_{L}(v_2)\leq 0$ for all linear forms $L$, hence 
$I=\{1,2\}$ works.

{\it Case 1.4: There is a component $v_1$ of $v_+$ that has no root with high multiplicity, and there is a component $v_2$ of degree $0$.} 
Then $\mu_{L}(v_1)>0$ and $\mu_{L}(v_2)=0$ for all linear forms $L$, hence 
$I=\{1,2\}$ works.

{\it Case 1.5 (i): $v_1\in V_+$ has exactly one root $L$ with high multiplicity, and 
$L$ is the only root of $v_2\in V_-$ with high multiplicity.} 
Then $\{1,2\}$ rules out all linear forms except possibly $L$. There exist 
$\{a,b\}$ ruling out $L$. Then $J=\{1,2\}\cup\{a,b\}$ rules out all linear forms.  
Moreover, the stabilizer of $v_J$ has dimension $\leq 1$. 
Indeed, $\dim(G_{v_{\{1,2\}}})\leq 1$ unless $v_1$ and $v_2$ are powers of $L$, when 
$\dim(G_{v_{\{1,2\}}})=2$. Note that one of $v_a$, $v_b$ is not a power of $L$, hence 
$\dim(G_{v_J})<\dim(G_{v_{\{1,2\}}})$. 

{\it Case 1.5 (ii): $v_1\in V_+$ has exactly one root $L_1$ with high multiplicity, and $v_2\in V_-$ has exactly one root $L_2$ with high multiplicity, where $L_1$ and $L_2$ are different (non-proportional).} 
Then $\dim(G_{(v_1,v_2)})\leq 1$, and $\{1,2\}$ rules out all linear forms except possibly $L_1$ or $L_2$. If there is a component $v_3\in V_+$  with 
$\mu_{L_1}(v_3)\geq 0$ or $v_3\in V_0$ with $m_{L_1}(v_3)<d_3/2$, then $\{3,2\}$ rules out $L_1$. 
Otherwise  we may assume that $\mu_{L_1}(v_1)<0$ is maximal among 
$\{\mu_{L_1}(v_i)\mid i\in M_+\}$, and there is a $v_3\in V_-\cup V_0$ such that 
$\{1,3\}$ rules out $L_1$. Similarly one finds a component $v_4$ such that 
$\{1,2,4\}$ rules out $L_2$. Then $J=\{1,2,3,4\}$ has the desired properties. 
 
{\it Case 1.6: $v_1\in V_+$ has exactly one root $L_1$ with high multiplicity, and $v_2\in V_-$ has exactly two roots with high multiplicity (that is,  $v_2=(L_2L_3)^h$, where $L_2,L_3$ are non-proportional linear forms and $h\in \mn$).} Then $\{1,2\}$ rules out all linear forms except possibly $L_1$ (note that $\mu_{L_2}(v_2)=\mu_{L_3}(v_2)=0$), and the stablilizer of $(v_1,v_2)$ has dimension $\leq 1$. Take at most two indices $\{a,b\}$ ruling out $L_1$. 
Then $J=\{1,2,a,b\}$ has the desired properties. 
 
{\it Case 1.7: $v_1\in V_+$ has exactly one root $L$ with high multiplicity,
and $v_2\in V_-$ has degree zero.} 
All linear forms different from $L$ are ruled out by $\{1,2\}$. 
Moreover, the stabilizer of $(v_1,v_2)$ has dimension $\leq 1$. 
Then we may take $J:=\{1,2\}\cup\{a,b\}$, where $\{a,b\}$ rules out $L$. 

{\it Case 1.8: $v_1\in V_+$ has exactly two roots $L_1,L_2$ with high multiplicity, $v_2\in V_-$ has exactly two roots $L_3,L_4$ with high multiplicity, and $\{\mc L_1,\mc L_2\}\neq \{\mc L_3,\mc L_4\}$.} 
Then  $\dim G_{(v_1,v_2)}=0$, and $\{1,2\}$ rules out  all linear forms, except when 
$\{\mc L_1,\mc L_2\}\cap\{\mc L_3,\mc L_4\}$ is non-empty, say $\mc L_2=\mc L_4$. In the latter case 
$\{1,2\}$ rules out all linear forms except $L_2$. Take $\{a,b\}$ ruling pout $L_2$. Then 
$I=\{1,2,a,b\}$  has the desired properties.  

{\it Case 1.9: $v_1\in V_+$ has exactly two roots $L_1,L_2$ with high multiplicity, and 
$v_i=c_i(L_1L_2)^{h_i}$ with $c_i\in\mc^{\times}$, $h_i\in\mn_0$ 
for all $i\in M_-$.}   Take any index, say $2\in M_-$. 
The stabilizer of $v_1$ has dimension $1$. Moreover, a linear form different from $L_1$ and $L_2$ is ruled out by $\{1,2\}$. Note that $\mu_{L_1}(v_i)=0=
\mu_{L_2}(v_i)$ for all $i\in M_-$. It follows that there exist 
indices $a_1,a_2\in M_+\cup M_0$ such that $\{a_1,2\}$ rules out $L_1$ and $\{a_2,2\}$ rules out $L_2$. So $J:=\{1,2,a_1,a_2\}$ has the desired properties.  

{\it Case 1.10: $\deg(v_i)=0$ for all $i\in M_+\cup M_-$.} 
We may assume that $1\in M_+$ and $2\in M_-$. Note that $(G_{v_1})^{\circ}=\sltwo$, hence if $1\in K\subseteq M$, then $\dim(G\cdot v_K)=
\dim(\sltwo\cdot v_K)+1$. 
Now $\sltwo$ fixes $v_+$ and $v_-$, so by the basic assumption of Case 1., the $\sltwo$-orbit of $v_0$ is closed. By Theorem~\ref{thm:sl2} we may choose at most three indices 
$a,b,c\in M_0$ such that $\sltwo\cdot (v_a,v_b,v_c)$ is closed and has 
dimension $\dim(\sltwo\cdot v_0)=\dim(\sltwo\cdot v)$. 
Set $I:=\{1,2,a,b,c\}$, then $\dim(G\cdot v_I)=\dim(G\cdot v)$. 
Suppose that $\limzr v_I$ exists for some $\onepsg$ $\rho$. 
Then $1,2\in I$ forces that $\rho$ is contained in $\sltwo$. 
Since the $\sltwo$-orbit of $v_I$ is closed, $\limzr v_I$ belongs to the orbit of $v_I$. By the Birkes-Richardson Criterion cited above, this implies that the orbit of $v_I$ is closed.

{\it Case 2:  
$\limzr\cdot v$ exists for some non-trivial $\onepsg$ $\rho$.} 
Then $\limzr\cdot v$ belongs to the orbit of $v$ and is fixed by $\rho$, hence $v$ is fixed by a non-trivial $\onepsg$ $\tau$. 
Formula (\ref{eq:rhoxy}) shows that replacing $v$ by an appropriate element in its orbit, we may assume that 
$v_i=c_ix^{a_i}y^{d_i-a_i}$ for $i=1,\ldots,m$, (where $c_i\in\mc^{\times}$),  
$\mu_x(v_i)=r(\tau)/n(\tau)=:q$ for all $i\in M_+\cup M_-$, and $m_x(v_j)=d_j/2$ for all $j\in M_0$. 

{\it Case 2.1:  $v$ has a component $v_1$ with positive degree.}   
There is a subset $1\in J\subseteq M$ with $|J|\leq 3$, and both $J_+$, $J_-$ non-empty. 
Let $\lambda$ be a non-trivial $\onepsg$ with positive eigenvector $L$. 
If $L$ is non-proportional to $x$ or $y$, then $\mu_L(v_i)\geq 0$ if $i\in J_+$,  
$\mu_L(v_i)\leq 0$ if $i\in J_-$, $\mu_L(v_1)\neq 0$ if $v_1\notin V_0$, and 
$m_L(v_1)<d_1/2$ if $v_1\in V_0$. Consequently, $\lim_{z\to 0}\lambda(z)\cdot v_J$ does not exist. 
If $L=x$, then $\mu_L(v_i)=q$ for all $i\in J_+\cup J_-$ and $m_L(v_i)=d_i/2$ if $i\in J_0$. 
So if $r(\lambda)/n(\lambda)\neq q$, , then $\lim_{z\to 0}\lambda(z)\cdot v_J$ does not exist. If $r(\lambda)/n(\lambda)=q$, 
then $\lim_{z\to 0}\lambda(z)\cdot v =g\cdot v$, where $g\in\sltwo$ satisfies that $g^{-1}\lambda(z)g$ is of the form (\ref{eq:1psg2}). The case $L=y$ is dealt with similarly. It follows that $G\cdot v_K$ is closed for all $K\supseteq J$. 
Since $\dim(G_{v_J})\leq 2$, adding at most two indices to $J$ we obtain an $I$ with the desired properties. 

{\it Case 2.2: $d_i=0$ for all $i\in M_+\cup M_-$.} 
Choose $v_1\in V_+$ and $v_2\in V_-$. Note that if $1\in K$, then 
$(G_{v_K})^{\circ}\leq\sltwo$, 
hence $\dim (G\cdot v_K)=\dim(\sltwo\cdot v_K)+1$. 
If $\lim_{z\to 0}\lambda(z)\cdot(v_1,v_2)$ exists for some $\onepsg$ $\lambda$, then 
$\lambda$ is a $\onepsg$ in $\sltwo$, hence fixes $(v_1,v_2)$. It follows that 
if $K\supseteq \{1,2\}$, then the $\gltwo$-orbit of $v_K$ is closed if and only if the $\sltwo$-orbit of $v_{K_0}$ is closed. Hence the existence of the desired $I$ follows from Theorem~\ref{thm:sl2}. 

Finally we prove the reverse inequality $\delta(\gltwo)\geq 5$. 
Consider the direct sum $V$ of five irreducible  $\gltwo$-representations,  
where $V_+=V_1\cong\pol_0(\mc^2)\otimes\det$, 
$V_-=V_2\cong\pol_0(\mc^2)\otimes\det^{-1}$, 
$V_0=V_3\oplus V_4\oplus V_5\cong (\pol_2(\mc^2)\otimes \det^0)^{\oplus 3}$. 
Consider the point $v:=(1,1,xy,x(x+y),y(x+y))\in V$. 
The $\sltwo$-orbit of $v_0$ is closed (see the proof of Theorem~\ref{thm:sl2}), hence the $G$-orbit of $v$ is closed (see Case 2.2). 
The closure of the orbit of $(v_-,v_0)$ contains $(0,v_0)$, and similarly, the 
closure of the orbit of $(v_+,v_0)$ contains $(0,v_0)$. 
The closure of $(1,1,xy,x(x+y))$ contains $(1,1,xy,xy)$. Similarly, neither of the orbits of $(1,1,xy,y(x+y))$ or $(1,1,x(x+y),y(x+y))$ is closed. 
(The orbit of $v_0$ is closed, but has smaller dimension than the orbit of $v$.) 
This example implies that desired inequality.   
\end{proof} 

For an affine $G$-variety $X$, denote by $k[X]$ the coordinate ring of $X$, and 
$k[X]^G$ the subalgebra of $G$-invariants. Following \cite{derksen-kemper} we say that a subset $S\subset k[X]^G$ is a {\it separating system of polynomial invariants} if whenever $x,y\in X$ can be separated by polynomial invariants 
(i.e. $f(x)\neq f(y)$ for some $f\in k[X]^G$), then $x$ and $y$ is separated by an element of $S$ (i.e. $h(x)\neq h(y)$ for an appropriate $h\in S$). There is a recent interest in separating systems of polynomial invariants, see for example \cite{kemper}, \cite{draisma-kemper-wehlau}, \cite{d:sep}, 
\cite{grosshans:2007}, \cite{dufresne}, \cite{neusel-sezer}, \cite{dufresne-elmer-kohls}, and the original motivation for the present paper came also from this topic. 

\begin{definition}\label{def:sigma} 
Given affine $G$-varieties $X_1,\ldots,X_m$, 
let $\sigma(G,X_1,\ldots,X_m)$ denote the minimal natural number $d$ that 
polynomial invariants depending only on $d$ factors form a separating system in $k[X_1\times\cdots\times X_m]^G$ 
(here we say that $f$ depends only on $d$ factors if there is an index subset 
$I\subseteq \{1,\ldots,m\}$ of size $|I|=d$, such that 
$f\in \pi_I^*(k[X_I]^G)$, where $\pi_I^*$ is the comorphism of the projection 
$X\to X_I$). 
Moreover, write $\sigma(G)$ for the supremum of $\sigma(G,X_1,\ldots,X_m)$ over all finite tuples of affine $G$-varieties. 
\end{definition} 

Our interest in the numbers $\delta(G)$ and $\kappa(G)$ stems from Lemma~\ref{lemma:kappa+delta} below.  

\begin{lemma}\label{lemma:kappa+delta} 
For a reductive algebraic group $G$ we have the inequality 
\[\sigma(G)\leq \delta(G)+\kappa(G).\]  
\end{lemma}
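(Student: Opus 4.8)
### Proof proposal

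The plan is to show that if $S$ is a separating system in $k[X]^G$ with $X=\prod_{i=1}^m X_i$, built from the appropriate small-support invariants, then $S$ separates any two points $x,y$ that are separated by $k[X]^G$ at all. The key structural fact about affine reductive quotients is that two points $x,y$ are \emph{not} separated by $k[X]^G$ exactly when $\overline{G\cdot x}\cap\overline{G\cdot y}\neq\emptyset$; moreover every orbit closure contains a unique closed orbit. So the strategy is: given $x,y$ with $f(x)\neq f(y)$ for some $f\in k[X]^G$, let $G\cdot x^0$ and $G\cdot y^0$ be the unique closed orbits in $\overline{G\cdot x}$ and $\overline{G\cdot y}$ respectively; then $G\cdot x^0\neq G\cdot y^0$, and I want to produce a small index set $I$ and an invariant depending only on the factors in $I$ which already separates $x_I$ from $y_I$.

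The order of steps I would carry out: First, reduce to the closed orbits — it suffices to find $I$ of size $\leq \delta(G)+\kappa(G)$ such that $x^0_I$ and $y^0_I$ lie in different closed $G$-orbits in $X_I$, because then any separating system for $X_I$ (in particular the relevant part of $S$) separates them, and invariants are constant on orbit closures so this pulls back to separate $x_I,y_I$. Second, apply the definition of $\delta(G)$ to the closed orbit $G\cdot x^0$: there is a set $I_1$ with $|I_1|=\delta(G)$ such that $G\cdot x^0_{I_1}$ is closed in $X_{I_1}$ with the same dimension; by Proposition~\ref{prop:deltadef} the same holds for every $J\supseteq I_1$. Do the analogous thing for $y^0$, getting $I_2$. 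Third — and this is where $\kappa(G)$ enters — I need that on the enlarged index set, the closed orbits $G\cdot x^0_I$ and $G\cdot y^0_I$ are still distinct for $I$ not too much bigger than $I_1\cup I_2$. Here I invoke Lemma~\ref{lemma:hellydimnatural}: since $G\cdot x^0\neq G\cdot y^0$ in $X$, and both become closed orbits of the full dimension after restricting to $I_1$ resp.\ $I_2$, a closed orbit of maximal dimension is determined by the point up to the $G$-action; two points in a common-dimension closed-orbit situation are $G$-equivalent on $X_I$ iff the corresponding coset intersection is nonempty, so the Helly dimension $\kappa(G)$ guarantees an index set of size $\leq\kappa(G)$ detecting the difference. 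Combining: take $I=I_1\cup(\text{the }\leq\kappa(G)\text{ indices from the Helly argument})$, of size $\leq\delta(G)+\kappa(G)$.

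The main obstacle I anticipate is step three: making precise that once $G\cdot x^0_{I_1}$ and $G\cdot y^0_{I_1}$ are \emph{closed of full dimension}, the question "are $x^0_I$ and $y^0_I$ in the same orbit?" genuinely becomes a question about emptiness of an intersection of cosets to which Lemma~\ref{lemma:hellydimnatural} applies. A closed orbit of a fixed point need not be the homogeneous space $G/G_{x^0}$ on the nose unless the stabilizer is the full isotropy subgroup, and one must be careful that "different closed orbits" survives restriction — the dimension control from $\delta(G)$ plus Proposition~\ref{prop:deltadef} is exactly what prevents a drop, but threading the coset bookkeeping through all the projections $\pi_I$, and checking that the $\leq\kappa(G)$ extra factors can be chosen from the original $m$ factors (rather than needing auxiliary homogeneous-space factors that are not among the $X_i$), is the delicate part. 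I expect the argument ultimately packages as: restrict to $I_1$, work inside the closed orbit, and apply the homogeneous-space formulation of $\kappa(G)$ there, so that the "extra" factors are projections of the already-restricted $X_i$'s.
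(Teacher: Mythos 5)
Your overall architecture coincides with the paper's: reduce to the unique closed orbits $G\cdot x^0$, $G\cdot y^0$ inside the orbit closures, use $\delta(G)$ together with Proposition~\ref{prop:deltadef} to find $\delta(G)$ indices after which a projected orbit stays closed of full dimension, and use the closed-coset formulation of $\kappa(G)$ (Lemma~\ref{lemma:hellydimnatural}) to find at most $\kappa(G)$ further indices witnessing that the two orbits remain distinct. The gap is in how you close the argument. You phrase the goal as finding $I$ such that $x^0_I$ and $y^0_I$ lie in \emph{different closed orbits}, i.e.\ you ask for both $G\cdot x^0_I$ and $G\cdot y^0_I$ to be closed. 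With your tools that forces $I\supseteq I_1\cup I_2$ and the bound degrades to $2\delta(G)+\kappa(G)$; your final line then silently drops $I_2$, and that is precisely the unjustified step.

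The fix, which is the paper's actual move, is to break the symmetry at the outset: assume $\dim(G\cdot x^0)\geq\dim(G\cdot y^0)$ and apply $\delta(G)$ only to $x^0$, getting $D$ with $G\cdot x^0_J$ closed of dimension $\dim(G\cdot x^0)$ for every $J\supseteq D$. You then never need $G\cdot y^0_J$ to be closed: if $\overline{G\cdot x^0_J}\cap\overline{G\cdot y^0_J}\neq\emptyset$, the closed orbit $G\cdot x^0_J$ must be the unique closed orbit in $\overline{G\cdot y^0_J}$, and if it lay in the boundary it would have dimension strictly less than $\dim(G\cdot y^0_J)\leq\dim(G\cdot y^0)\leq\dim(G\cdot x^0)=\dim(G\cdot x^0_J)$, a contradiction; hence the orbit closures meet if and only if $G\cdot x^0_J=G\cdot y^0_J$, so ``different orbits'' already yields ``separated by invariants on $X_J$''. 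The Helly step then supplies the last $\leq\kappa(G)$ indices --- either applied to $G$ on all factors, as you suggest, or, as the paper does, to the stabilizer $H$ of $x^0_D$ acting on the remaining factors after arranging $x^0_D=y^0_D$, using $\kappa(H)\leq\kappa(G)$; both variants work. With the dimension comparison inserted and $I_2$ discarded, your plan becomes the paper's proof.
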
 

\begin{proof} Assume $\delta=\delta(G)$ is finite (otherwise the statement is vacuous), and let $X_1,\ldots,X_m$ be affine $G$-varieties with $m>\delta+\kappa(G)$. 
Recall that for an action of a reductive group on an affine variety, two points can be separated by polynomial invariants if and only if their orbit closures do not intersect, moreover, the closure of any orbit contains a unique closed orbit, see for example \cite{grosshans:1997}. 
Therefore it is sufficient to show that if  $x=(x_1,\ldots,x_m)$ and 
$y=(y_1,\ldots,y_m)$ have closed orbits, and none of their projections into subfactors 
with $d=\delta(G)+\kappa(G)$ components can be separated by polynomial invariants, then  
$x$ and $y$ belong to the same orbit. By symmetry we may assume that 
$\dim(Gx)\geq\dim(Gy)$. 
By the definition of $\delta(G)$ we have that after a possible renumbering of the components $X_i$, the orbit of $x_D$ is closed in $X_D$, and 
has the same dimension as $Gx$, where 
$D=\{1,\ldots,\delta(G)\}$. By Proposition~\ref{prop:deltadef}, 
the orbit of $x_J$ is closed for all $J\supseteq D$. 
This means that for such a $J\supseteq D$, the orbit closures of $x_J$ and $y_J$ 
intersect non-trivially if and only if $Gx_J=Gy_J$ (recall that 
$\dim(Gx_J)=\dim(Gx)\geq\dim(Gy)\geq\dim(Gy_J)$). 
It follows from our assumption that $Gx_J=Gy_J$ for all $J\supseteq D$, with 
$|J|\leq\delta(G)+\kappa(G)$ (otherwise $x_J$ and $y_J$ can be separated by polynomial invariants). 
In particular, $Gx_D=Gy_D$, so replacing $y$ by an appropriate elements in its orbit, we may assume that $x_D=y_D$. Moreover, denoting by $H$ the stabilizer of 
$x_D$, for all $J\subseteq \{\delta(G)+1,\ldots,m\}$ with $|J|\leq\kappa(G)$, 
we have $Hx_J=Hy_J$. Since $\kappa(H)\leq\kappa(G)$, by definition of the Helly dimension $(x_{\delta+1},\ldots,x_m)$ and $(y_{\delta+1},\ldots,y_m)$ belong to the same 
$H$-orbit, implying that $x$ and the original $y$ belong to the same $G$-orbit.  
\end{proof} 

An affirmative answer to Question~\ref{conj:delta} would imply 
an affirmative answer to the following: 

\begin{question}\label{conj:sigma} 
In characteristic zero, is $\sigma(G)$ finite for any 
reductive group $G$?  
\end{question} 


\section{Separating systems}\label{sec:separating} 

In \cite{d:sep} the number of variables in a separating system is bounded in terms of the dimension of the representation. Using an idea from \cite{grosshans:2007}, we sharpen Theorem 2.2 of \cite{d:sep}. 

In this section $G$ will be a linear algebraic group over an algebraically closed field $k$ of arbitrary characteristic, and $V,W$ will be finite dimensional $G$-modules (i.e. $G$ acts linearly and morphically on them). Write $V^m$ for the $m$-fold direct sum of $V$. 
Given a subset $I=\{i_1,\ldots,i_d\}\subseteq\{1,\ldots,m\}$, write 
$\pi_I:V^m\oplus W\to V^d\oplus W$ for the projection 
$(v_1,\ldots,v_m,x)\mapsto (v_{i_1},\ldots,v_{i_d},x)$. 

\begin{theorem}\label{thm:separating} 
Take two points $y,z\in V^m\oplus W$, and assume that 
there is a polynomial invariant $f\in k[V^m\oplus W]^G$ 
such that $f(y)\neq f(z)$. Then there exists a subset 
$I\subseteq\{1,\ldots,m\}$ with $|I|=d\leq 1+\dim_{k}(V)$, 
and a polynomial invariant $c\in k[V^d\oplus W]^G$ with 
$c(\pi_I(y))\neq c(\pi_I(z))$. 
\end{theorem}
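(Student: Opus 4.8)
The plan is to reduce the statement to a question about a single point in $V$ by using a $G$-equivariant ``merging'' of the $m$ copies of $V$ into fewer copies, together with a dimension count. Concretely, I would argue as follows. Since $f(y)\neq f(z)$, the orbit closures $\overline{G\cdot y}$ and $\overline{G\cdot z}$ are disjoint (in the reductive case this is classical; in general one only needs that $f$ witnesses that $y,z$ lie in different fibres of the quotient morphism, so it suffices to separate them after projection). The key observation is that for a fixed $(v_1,\ldots,v_m)\in V^m$, the span $U:=\mathrm{span}_k(v_1,\ldots,v_m)$ is a subspace of $V$ of dimension $\leq\dim_k(V)$, and one can choose $d\leq\dim_k(V)$ indices $i_1,\ldots,i_d$ such that $v_{i_1},\ldots,v_{i_d}$ already span $U$; then each remaining $v_j$ is a linear combination of the chosen ones. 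This linear-algebra fact is applied to $y$ (writing $y=(v_1,\ldots,v_m,x)$).

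The heart of the argument is then the following: I would construct, for a suitable choice of $I$ of size $d\leq\dim_k(V)$, a morphism $\Phi:V^d\oplus W\to V^m\oplus W$ which is $G$-equivariant and satisfies $\Phi(\pi_I(y))=y$. Indeed, once $v_{i_1},\ldots,v_{i_d}$ span $U\ni v_1,\ldots,v_m$, pick scalars $a_{j\ell}$ with $v_j=\sum_{\ell=1}^d a_{j\ell}v_{i_\ell}$ for every $j$, and define $\Phi(w_1,\ldots,w_d,w):=\bigl(\sum_\ell a_{1\ell}w_\ell,\ldots,\sum_\ell a_{m\ell}w_\ell,w\bigr)$; this is linear, hence a morphism, and it commutes with the $G$-action because $G$ acts linearly on each summand and $\Phi$ is built from scalar linear combinations within the common module $V$. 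By construction $\Phi(\pi_I(y))=y$. Now set $c:=f\circ\Phi\in k[V^d\oplus W]$; it is a polynomial invariant because $\Phi$ is equivariant, and $c(\pi_I(y))=f(y)$.

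It remains to get $c(\pi_I(z))\neq c(\pi_I(z))$ — that is, to ensure the \emph{same} $\Phi$ and $I$ also ``witness'' $z$. This is the step I expect to be the main obstacle, since $\Phi(\pi_I(z))$ need not equal $z$: the linear relations among the components of $z$ may differ from those among the components of $y$. The fix is to enlarge the picture using the idea attributed to \cite{grosshans:2007}: instead of separating $y$ from $z$ directly, separate them after applying a generic linear combination. Precisely, I would choose the index set $I$ and the coefficient matrix $(a_{j\ell})$ adapted to the pair $(y,z)$ at once: let $U':=\mathrm{span}_k(v_1,\ldots,v_m,v'_1,\ldots,v'_m)$ where $z=(v'_1,\ldots,v'_m,x')$. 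Hmm — that gives $\dim U'\leq\dim_k(V)$ still, but one cannot in general choose $d$ of the \emph{original} indices spanning $U'$ while simultaneously expressing both the $v_j$ and the $v'_j$ in terms of those $d$ chosen vectors. So instead I would argue: pick $d\leq\dim_k(V)$ indices $i_1,\ldots,i_d$ such that $v_{i_1},\ldots,v_{i_d}$ is a basis of $U=\mathrm{span}(v_j)$; if also $v'_{i_1},\ldots,v'_{i_d}$ happen to determine $z$ via the forced relations we are done, and if not, one still has $f(y)\neq f(z)$ forcing $c(\pi_I(y))=f(y)\neq f(z)$, and one separately shows $c(\pi_I(z))=f(\Phi(\pi_I(z)))$ cannot also equal $f(z)$ unless it already differs from $c(\pi_I(y))$. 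The cleanest route, and the one I would ultimately take, is the standard trick: consider the $G$-variety $V^d\oplus W$ together with \emph{all} equivariant linear $\Phi:V^d\oplus W\to V^m\oplus W$ of the above shape; the set of $\Phi$ for which $\Phi(\pi_I(y))=y$ and $\Phi(\pi_I(z))=z$ simultaneously is non-empty for a suitable $I$ of size $\leq 1+\dim_k V$ (the ``$+1$'' absorbing the case where a single extra coordinate is needed to record a relation that holds for $y$ but not for $z$, or vice versa). Then $c:=f\circ\Phi$ does the job. Thus the main technical work is precisely this combinatorial/linear-algebra lemma: from two $m$-tuples in $V$ one can select $\leq 1+\dim_k V$ indices and a common equivariant linear reconstruction map recovering both tuples — and this is where the bound $1+\dim_k(V)$ (rather than $\dim_k V$) comes from.
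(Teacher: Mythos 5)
Your strategy is not the paper's, and it has a genuine gap at exactly the point you flag as ``the main technical work.'' The common-reconstruction lemma you need --- that for two $m$-tuples $(v_1,\ldots,v_m)$ and $(v'_1,\ldots,v'_m)$ in $V$ one can choose at most $1+\dim_k V$ indices and a \emph{single} coefficient matrix $(a_{j\ell})$ recovering both tuples --- is false. The relevant object is the tuple of pairs $(v_j,v'_j)\in V\oplus V$: a common linear reconstruction from the indices in $I$ exists precisely when every pair lies in the span of the selected pairs, and that span can have dimension up to $2\dim_k V$ (take $m=2\dim_k V$ and let the pairs form a basis of $V\oplus V$; then no proper subset of indices suffices). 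So your argument, repaired, yields only the bound $2\dim_k V$ --- which is exactly the ``weaker bound'' that Remark~\ref{rem:grosshans} identifies as the starting point of \cite{d:sep}, essentially the polarization argument as in \cite{draisma-kemper-wehlau}. The improvement to $1+\dim_k V$ is not a linear-algebra fact: in \cite{d:sep} it is proved for reductive groups by a genuinely different argument, and that result (Theorem 3.2 of \cite{d:sep}) is taken as a black box in the present proof.

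The actual content of the paper's proof is something your proposal does not address: extending the known reductive case to an \emph{arbitrary} linear algebraic group $G$. This is done by mapping $G$ into the reductive group $H=\mathrm{GL}(V)\times\mathrm{GL}(W)$ and applying the transfer principle, which identifies $k[V^m\oplus W]^G$ with $k[(V^m\oplus W)\times(H/G)]^H$; the technical point is that $H/G$ need not be affine, so one replaces it by an affine $H$-variety $Z=\mathrm{Spec}(A)$ for a finitely generated $H$-stable subalgebra $A\subseteq k[H/G]$ containing the transferred invariant, and then applies the reductive-case theorem with $W\times Z$ in the role of the extra affine factor. Your opening remark that in general ``one only needs that $f$ witnesses that $y,z$ lie in different fibres of the quotient morphism'' glosses over precisely this issue: for non-reductive $G$ there is no good affine quotient, and the separation properties you rely on are not available without the transfer step.
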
 

\begin{proof} 
In the special case when $G$ is reductive the result 
was proved in \cite{d:sep} (cf. Theorem 3.2). 
Moreover, the proof works verbatim when 
$W$ is an arbitrary affine $G$-variety.  

Now we apply the "transfer principle" to extend the result from reductive groups to arbitrary groups, similarly to Grosshans \cite{grosshans:2007}, who extended this way his "p-root closure theorem" for arbitrary groups from the special case of reductive groups. The action of $G$ on $V\oplus W$ gives a homomorphism of $G$ into $H:={\mathrm{GL}}(V)\times{\mathrm{GL}}(W)$. 
Write $G_1$ for the image of $G$ in $H$ under this homomorphism. 
Then $V,W$ are naturally $G_1$-modules, and it is obviously sufficient to prove our statement for the case when $G=G_1$. 
Thus from now on we assume that $G\subseteq H$. 
Note that the group $H$ is reductive. 
Set $U:=V^m\oplus W$.  
Consider the $G\times H$-variety $U\times H$, with the action 
$(g,h)\cdot (u,h')=(hu,hh'g^{-1})$. 
The comorphism $\Phi^*$ of the $H$-invariant morphism 
$\Phi:U\times H\to U$, $(u,h)\mapsto h^{-1}u$ 
identifies $k[U]^G$ with $k[U\times (H/G)]^H$ (this is called the transfer principle or adjunction argument, cf. \cite{grosshans:1997}).  

Take $y,z\in U$ and $f\in k[U]^G$ with $f(y)\neq f(z)$. 
Keep the notation $1_H$ for the image of the identity element of $H$ under the quotient morphism $H\to H/G$.  Then we have $\Phi^*(f)(y,1_H)\neq \Phi^*(f)(z,1_H)$, so $(y,1_H)$ and $(z,1_H)$ can be separated by an 
$H$-invariant on $U\times (H/G)$. 
Since $H$ acts rationally on $k[H]$ by left translation, and 
$k[H/G]$ is an $H$-stable subalgebra, it follows that $H$ acts 
rationally on $k[H/G]$. Hence there is a finitely generated 
$H$-stable subalgebra $A$ in $k[H/G]$ such that 
$\Phi^*(f)\in k[U]\otimes A$. 
Write $Z$ for the affine $H$-variety ${\mathrm{Spec}}(A)$, and 
$\alpha:H/G\to Z$ for the morphism whose comorphism is the 
inclusion $A\to k[H/G]$. 
Then $\Phi^*(f)$ belongs to $k[U\times Z]$, and 
$\Phi^*(f)(y,\alpha(1_H))\neq \Phi^*(f)(z,\alpha(1_H))$. 
By Theorem 3.2 in \cite{d:sep} (and the remark in the first paragraph of the present proof) there is an invariant $b\in k[V^d\times (W\times Z)]^H$  
with $d\leq 1+\dim_{k}(V)$, and an $I\subseteq \{1,\ldots,m\}$ with 
$|I|=d$, such that $b(\pi_I(y),\alpha(1_H))\neq b(\pi_I(z),\alpha(1_H))$. 
Identifying $b$ with its image under 
${\mathrm{id}}\otimes\alpha^*:k[(V^d\times W)\times Z]\to 
k[(V^d\times W)\times (H/G)]$ 
we find that $b\in k[V^d\times W\times (H/G)]^H$ has the property that 
$b(\pi_I(y),1_H)\neq b(\pi_I(z),1_H)$.  
Define $c\in k[V^d\times W]^G$ by $c(x)=b(x,1_H)$. 
Then we have $c(\pi_I(y))\neq c(\pi_I(z))$. 
\end{proof} 
 
\begin{remark}\label{rem:grosshans} {\rm 
The analogues of Theorem~\ref{thm:separating} with the weaker bound 
$d\leq 2\dim_{k}(V)$ was the starting point of \cite{d:sep}. 
An example is given in loc.cit. showing that the bound 
$d\leq 1+\dim_{k}(V)$ is sharp. }
\end{remark}


\section{Lie groups}\label{sec:lie} 

The following example shows that it is essential in Definition~\ref{def:hellydim} to consider finite systems of cosets only. 

\begin{example} \label{example:infinitecosets} 
{\rm Consider the additive group $\mz$ of integers as a closed subgroup 
of the one dimensional real Lie group $(\mr,+)$. 
Now for all $n\in\mn$ define the coset 
\[C_n:=\{x\in\mz\mid x\equiv \sum_{i=0}^{n-1} 2^{2^i}\ \mathrm{mod}\ 2^{2^n}\}\]
Clearly $C_1\supset C_2\supset C_3\supset\cdots$ is a descending chain of cosets, hence any finite subsystem intersects non-trivially. However the 
intersection of all the cosets is empty, since the minimum of the absolute values of the elements of $C_n$ tends to infinity, as $n\to\infty$. 
On the other hand, we shall see below that $\kappa(\mr)=3$. }
\end{example} 

Next we show that the Helly dimension of a compact Lie group is finite. We need a variant of Platonov's Lemma in this context. 

\begin{lemma}\label{lemma:platonov} 
Let $G$ be a compact real Lie group, and write 
$G^{\circ}$ for the connected component of the identity. Then $G$ contains a finite subgroup $H$ with $G=HG^{\circ}$.  
\end{lemma}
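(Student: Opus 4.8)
The statement is a compact-group analogue of Platonov's Lemma: I want a finite subgroup $H\le G$ that surjects onto the (finite) component group $\pi_0(G)=G/G^\circ$. The natural strategy is to find a single finite subgroup meeting every connected component, and the cleanest way to produce finite subgroups in a compact Lie group is to exploit the exponential map and the structure of $G^\circ$. The plan is to reduce to producing, for each coset of $G^\circ$, an element of finite order lying in it, and then to assemble finitely many such elements into a finite subgroup by controlling the subgroup they generate.

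**Key steps.** First I would pick a set-theoretic section: choose $g_1,\dots,g_r\in G$, one in each connected component (so $r=|G/G^\circ|$), with $g_1=1$. The subgroup generated by $g_1,\dots,g_r$ need not be finite, so the real work is to replace each $g_i$ by a finite-order element $h_i$ in the same component. To do this, consider the image of $g_i$ in $G/G^\circ$, which has finite order, say $n_i$; then $g_i^{n_i}\in G^\circ$. The element $g_i$ acts on $G^\circ$ by conjugation, and $g_i^{n_i}$ lies in the connected compact Lie group $G^\circ$; using the fact that in a compact connected Lie group every element lies on a one-parameter subgroup (equivalently $\exp$ is surjective), write $g_i^{n_i}=\exp(\xi_i)$. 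One then wants to modify $g_i$ within its coset — replacing $g_i$ by $g_i\exp(\eta)$ for a suitable $\eta$ chosen so that the $n_i$-th power becomes trivial. This is possible because the map $G^\circ\to G^\circ$, $x\mapsto (g_i x)^{n_i}$ has image containing a neighbourhood of $g_i^{n_i}$ (by a submersion/averaging argument, or by working inside the closure of the cyclic group generated by $g_i$, which is a compact abelian Lie group, hence of the form (torus)$\times$(finite), where one extracts a finite-order representative directly). I would lean on the latter: the closed subgroup $\overline{\langle g_i\rangle}$ is a compact abelian Lie group $T_i\times F_i$; its image in $G/G^\circ$ is cyclic, generated by the image of $g_i$, and one can choose $h_i\in F_i$-part mapping to the same generator of the (finite) image — giving $h_i$ of finite order in the same component as $g_i$. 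Finally, let $H=\langle h_1,\dots,h_r\rangle$. Then $H$ surjects onto $G/G^\circ$, so $G=HG^\circ$; it remains to see $H$ is finite. For this, observe $H\cap G^\circ$ is a subgroup of $G^\circ$ normalized by $H$, and $H/(H\cap G^\circ)$ is finite; one shows $H\cap G^\circ$ is finite by noting it is discrete — indeed $H$ is generated by torsion elements, and a finitely generated subgroup of a compact Lie group all of whose generators (and enough conjugates) have bounded order has finite closure, so $\overline H$ is a compact Lie group with trivial identity component, hence finite, hence $H=\overline H$ is finite.

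**Main obstacle.** The delicate point is ensuring $H=\langle h_1,\dots,h_r\rangle$ is actually finite and not merely a group surjecting onto $G/G^\circ$ with controlled generators — a priori torsion elements in a Lie group can generate an infinite (even dense) subgroup. The cleanest fix I expect to use is to choose the $h_i$ all inside a single finite subgroup to begin with: conjugation gives an action of $G/G^\circ$ on $G^\circ$, one builds the semidirect-product structure and applies a fixed-point/averaging argument to realise a lift of $G/G^\circ$ by finite-order elements whose pairwise products stay finite-order, i.e. one directly constructs a finite subgroup of $G$ isomorphic to a subgroup surjecting onto $\pi_0(G)$. Concretely, I would invoke that a compact Lie group $G$ is a (possibly infinite) extension $1\to G^\circ\to G\to \pi_0(G)\to 1$ and use that $\pi_0(G)$ being finite together with $H^2$-type vanishing (or the explicit one-parameter-subgroup argument above carried out consistently over generators and relations of $\pi_0(G)$) yields a finite complement up to finite kernel. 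Making this assembly precise, rather than the existence of individual torsion lifts, is where the care lies.
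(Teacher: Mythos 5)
You correctly isolate the crux --- that torsion lifts $h_1,\dots,h_r$ of generators of $G/G^{\circ}$ need not generate a finite subgroup --- but neither of your proposed fixes closes this gap. The claim that a subgroup generated by finitely many elements of bounded order has finite closure (hence trivial identity component of the closure) is false: two reflections in $O(2)$ whose axes meet at an angle that is an irrational multiple of $\pi$ generate an infinite dihedral group whose closure is all of $O(2)$, even though every generator has order $2$. The alternative fix is also not available as stated: conjugation gives only an outer action of $G/G^{\circ}$ on $G^{\circ}$, the extension $1\to G^{\circ}\to G\to G/G^{\circ}\to 1$ does not split in general, and since $G^{\circ}$ is non-abelian there is no $H^2$-vanishing statement to invoke; indeed the lemma asserts something strictly weaker than a splitting (a finite subgroup surjecting onto $G/G^{\circ}$, possibly meeting $G^{\circ}$ nontrivially), and your sketch does not explain how to produce it.

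The missing idea, which is how the paper proceeds, is first to replace $G$ by the normalizer $N$ of a maximal torus $T$ of $G^{\circ}$: since all maximal tori of $G^{\circ}$ are conjugate under $G^{\circ}$, one has $G=NG^{\circ}$, and $Q=N/T$ is finite because $(N\cap G^{\circ})/T$ is the Weyl group. This reduces the problem to an extension of the finite group $Q$ by the \emph{abelian divisible} group $T$, where a Schur--Zassenhaus-type cocycle computation works: with $q=|Q|$, one takes $q$-th roots of the averaged cocycle $d(y)=\prod_{x\in Q}c(x,y)$ to correct a transversal $\{t_x\}$ to elements $s_x$ whose products close up modulo the finite subgroup $E=\{u\in T\mid u^q=1\}$, so that $E$ together with the $s_x$ generates a finite subgroup $H$ of $N$ with $N=HT$, hence $G=HG^{\circ}$. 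Without a reduction of this kind that makes the kernel abelian and divisible, the assembly step you yourself flag as delicate does not go through.
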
 

\begin{proof} Let $T$ be a maximal torus in $G^{\circ}$.  For any $g\in G$ we have that $gTg^{-1}$ is also a maximal torus in $G^{\circ}$, hence $gTg^{-1}=hTh^{-1}$ for some $h\in G^{\circ}$ (see for example Corollary 4.35 in \cite{knapp}). 
Therefore $G=NG^{\circ}$, where $N$ denotes the normalizer of $T$ in $G$. 
Moreover, the centralizer of $T$ in $G^{\circ}$ is $T$ (see for example Corollary 4.52 in \cite{knapp}), hence the factor group $(N\cap G^{\circ})/T$ is the Weyl group of $G^{\circ}$ (see e.g. Theorem 4.54 in \cite{knapp}). In particular, $(N\cap G^{\circ})/T$ is finite, implying that $Q=N/T$ is finite. So it is sufficient to find a finite subgroup $H$ in $N$ with $N=HT$. For this purpose we modify the proof of the Schur-Zassenhaus Theorem, given for example in 9.1.2 of \cite{robinson}. 
Take a transversal $\{t_x\mid x\in Q\}$ to $T$ in $N$. 
For alll $x,y\in Q$ there is an element $c(x,y)$ of $T$ with 
$t_xt_y=t_{xy}c(x,y)$. Set $d(y):=\prod_{x\in Q}c(x,y)$. 
The cocycle condition on $c$ implies that  
$d(yz)=d(y)^zd(z)c(y,z)^{-q}$ for all $y,z\in Q$, where $q=|Q|$. 
Set $E:=\{u\in T\mid u^q=1\}$. Clearly $E$ is a finite subgroup: 
we have $|E|=q^{\dim(T)}$. Moreover, it is a characteristic subgroup of $T$, 
hence is preserved by conjugation by an arbitrary element of $N$. 
Denote $e(*)$ any $q$th root of $d(*)$ (note that being the direct product of circle groups, $T$ is divisible). 
Then $e(yz)u(y,z)=e(y)^ze(z)c(y,z)$, where $u(y,z)$ is an appropriate element of $E$. Now set $s_x:=t_xe(x)$. We have 
\[s_ys_z=t_yt_ze(y)^ze(z)=t_{yz}c(y,z)e(y)^ze(z)=
t_{yz}e(yz)u(y,z)=s_{yz}u(y,z).\] 
This obviously shows that $E$ and the $s_x$ $(x\in Q)$ generate a finite subgroup $H$ of $N$, and $N=HT$. 
\end{proof} 

\begin{theorem}\label{thm:compact} 
The Helly dimension $\kappa(G)$ of a 
compact real Lie group $G$ is finite. 
\end{theorem}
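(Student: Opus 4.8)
The plan is to reduce to connected $G$ and then transcribe the proof of Theorem~\ref{thm:alggphelly} into the topological setting. The first step is a Lie-group analogue of Lemma~\ref{lemma:extension}: its proof applies verbatim to a compact Lie group $G$ with a closed normal subgroup $N$. Indeed, in any topological group the intersection of finitely many closed cosets is again a closed coset or is empty, and since $G$ is compact every closed coset is compact, so its image under the quotient map $G\to G/N$ is compact and hence closed; with these two observations the argument of Lemma~\ref{lemma:extension} goes through unchanged, giving $\kappa(G)\le\kappa(N)\cdot\kappa(G/N)$. Taking $N=G^{\circ}$ and using that $G/G^{\circ}$ is a finite group (the identity component of a Lie group is open, and a compact group has finitely many components), for which $\kappa(G/G^{\circ})<\infty$, it suffices to bound $\kappa(G^{\circ})$. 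So from now on assume $G$ is connected.

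By the Peter--Weyl theorem $G$ admits a faithful finite-dimensional real representation, so we may fix an embedding $G\subseteq GL_n(\mr)$ for some $n$; since $\mathrm{char}(\mr)=0$, Jordan's Theorem applies to finite subgroups of $GL_n(\mr)$. Set $N:=(n+1)(1+\log_2(J(n)))$ and $d:=\dim(G)+N$, with $\dim$ the dimension as a real Lie group. I claim $\kappa(G)\le d$, and the proof is word for word that of Theorem~\ref{thm:alggphelly} with the Zariski topology replaced by the Lie topology. The two geometric inputs used there survive: a non-empty intersection of closed cosets $gH\cap hK$ is a coset of the closed subgroup $H\cap K$ (so dimensions descend along chains of such intersections exactly as before, $H\cap K$ having the same identity component as $H$ once $\dim(H\cap K)=\dim(H)$), and a continuous homomorphism of compact Lie groups carries closed (compact) subgroups to closed subgroups. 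Running the descent on a family $C_1,\dots,C_m$ ($m\ge d$) of closed cosets, any $d$ of which meet, one obtains after renumbering and translating a closed subgroup $H=C_1\cap\cdots\cap C_e$ with $e\le\dim(G)$ and cosets $D_j=H\cap C_j$ ($j>e$), each a coset with respect to a subgroup of $H$ containing $H^{\circ}$; writing $\eta\colon H\to H/H^{\circ}$ for the projection onto the finite group $H/H^{\circ}$, we have $D_j=\eta^{-1}(E_j)$ for cosets $E_j$ of $H/H^{\circ}$, any $d-\dim(G)=N$ of which meet.

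The only ingredient not already contained in Section~\ref{sec:alggroup} is the uniform bound $\kappa(H/H^{\circ})\le N$, valid for every closed subgroup $H\le G$. Here one uses Lemma~\ref{lemma:platonov} in place of the algebraic Platonov Lemma: $H$ is a compact Lie group, so $H=H_1H^{\circ}$ for some finite subgroup $H_1\le H\subseteq GL_n(\mr)$, and therefore $H/H^{\circ}$ is a homomorphic image of $H_1$. By Jordan's Theorem $H_1$ has an abelian normal subgroup $A$ with $[H_1:A]\le J(n)$; a finite abelian subgroup of $GL_n(\mc)$ is simultaneously diagonalizable, hence generated by $\le n$ elements, and so is its image $\bar A$ in $H/H^{\circ}$. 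Thus $\bar A$ is abelian normal in $H/H^{\circ}$ with $[H/H^{\circ}:\bar A]\le J(n)$, so $\kappa(\bar A)\le n+1$ by Corollary~\ref{cor:abelkappa}, and $\kappa\bigl((H/H^{\circ})/\bar A\bigr)\le 1+\log_2(J(n))$ exactly as in the proof of Corollary~\ref{cor:linear}; Lemma~\ref{lemma:extension} then yields $\kappa(H/H^{\circ})\le(n+1)(1+\log_2(J(n)))=N$. Consequently the $E_j$ have a common element, hence so do the $D_j$ and so do $C_1,\dots,C_m$; this proves $\kappa(G^{\circ})\le\dim(G)+N$, and combined with the first paragraph, $\kappa(G)\le(\dim(G)+N)\cdot\kappa(G/G^{\circ})<\infty$ for an arbitrary compact Lie group $G$.

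The main obstacle is obtaining the bound on $\kappa(H/H^{\circ})$ \emph{uniformly} over all closed subgroups $H$: intrinsically $H/H^{\circ}$ is merely a finite group, and its smallest faithful linear representation may have dimension growing with $|H/H^{\circ}|$, so Jordan's constant $J(n)$ for a fixed $n$ would be of no use. Lemma~\ref{lemma:platonov} is precisely the device that places $H/H^{\circ}$, up to passage to a homomorphic image (which the proof of Corollary~\ref{cor:linear} accommodates without change), inside $GL_n(\mr)$ with $n$ fixed. Everything else in the argument is a routine transcription of Section~\ref{sec:alggroup} from the Zariski topology to the Lie group topology.
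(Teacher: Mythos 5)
Your proof is correct and follows essentially the same route as the paper: embed $G$ in a matrix group, rerun the proof of Theorem~\ref{thm:alggphelly} with the Lie topology, and substitute Lemma~\ref{lemma:platonov} for the algebraic Platonov lemma so as to reduce to Corollary~\ref{cor:linear}. You are in fact slightly more careful than the paper at one point, noting that Lemma~\ref{lemma:platonov} only exhibits $H/H^{\circ}$ as a \emph{quotient} (not a subgroup) of a finite subgroup of $GL_n$ and checking that the argument of Corollary~\ref{cor:linear} still applies; your preliminary reduction to connected $G$ is harmless but unnecessary.
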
 

\begin{proof} Since $G$ has a faithful finite dimensional smooth representation, we may assume that $G$ is a closed subgroup of the group of invertible $n\times n$ matrices with complex entries. 
Note that any closed subgroup of $G$ is compact. 
Moreover, closed cosets are submanifolds of $G$, hence we may speak about their {\it dimension}. 
Now we may repeat verbatim the proof of Theorem~\ref{thm:alggphelly}. 
Using Lemma~\ref{lemma:platonov} at the appropriate point, the proof reduces to Corollary~\ref{cor:linear}. 
\end{proof} 

\begin{remark}\label{remark:compsupport} {\rm The Helly dimension $\kappa$ of the compact real Lie group $G$ has the following interpretation: Given compact topological $G$-spaces $X_i$  
$(i=1,\ldots,m)$, we have the diagonal action of $G$ on the product space 
$X:=\prod_{i=1}^m X_i$. Consider the orbit space $X/G$ with the factor topology. Write $C(Y)$ for the algebra of real valued continuous functions on a topological space $Y$. For a subset $I\subseteq \{1,\ldots,m\}$, set 
$X_I:=\prod_{i\in I}X_i$, and view 
$C(X_I)$ as a subalgebra of $C(X)$ in the obvious manner. 
By the Stone-Weierstrass Theorem (see e.g. page 2 in \cite{hochschild}) 
the $\mr$-subalgebra of $C(X)$ generated by $\{C(X_i)\mid i=1,\ldots,m\}$ 
is dense in $C(X)$ with respect to the topology on $C(X)$ induced by the maximum norm. On the other hand, in general the subalgebras $\{C(X_i/G)\mid i=1,\ldots,m\}$ do not generate a dense subalgebra in $C(X/G)$ (for example, it may well happen 
that all the $X_i$ are homogeneous spaces, so $C(X_i/G)$ consist of constant functions, but $X/G$ is not just a point). 
Now $\kappa(G)$ is the minimal natural number $d$ such that for any finite collection  
$X_i$ $(i=1,\ldots,m)$ of compact $G$-spaces, the subalgebras $C(X_I/G)$ with  $I\subseteq\{1,\ldots,m\}$, 
$|I|\leq d$ generate a dense subalgebra in $C(X/G)$.  } 
\end{remark}

We conclude by computing the Helly dimension of the additive group of $\mr$. 

\begin{proposition}\label{prop:kappa(R)} 
\begin{itemize}
\item[(i)] We have the equality $\kappa(\mz)=2$. 
\item[(ii)] We have the equality $\kappa(\mr)=3$. 
\end{itemize} 
\end{proposition}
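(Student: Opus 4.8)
The plan is to treat the two parts separately, building $\kappa(\mr)$ on top of $\kappa(\mz)$. For part (i), I would first establish the lower bound $\kappa(\mz)\geq 2$: the single coset $2\mz$ together with the single coset $1+2\mz$ have empty intersection, but each one alone is nonempty, so one coset is never enough, forcing $\kappa(\mz)\geq 2$. For the upper bound $\kappa(\mz)\leq 2$, I would use that the closed subgroups of $\mz$ are exactly $n\mz$ for $n\geq 0$, so a finite system of cosets has the form $a_i+n_i\mz$. If $n_i=0$ for some $i$ the coset is a point and the argument is trivial; otherwise, any two of them, $a_i+n_i\mz$ and $a_j+n_j\mz$, meet iff $a_i\equiv a_j\pmod{\gcd(n_i,n_j)}$, and I would invoke the Chinese Remainder Theorem (in the standard "pairwise compatibility implies global compatibility" form for congruences with arbitrary moduli) to conclude that pairwise nonempty intersections force a common solution. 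This handles (i).

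For part (ii), the lower bound $\kappa(\mr)\geq 3$ follows from $\kappa(\mz)=2$ together with $\kappa(\mz)\leq\kappa(\mr)$ being \emph{not} quite what I want — instead I should exhibit three closed cosets in $\mr$ pairwise intersecting but with empty total intersection, and show no two cosets can do this. The closed subgroups of $\mr$ are $\{0\}$, $\mr$ itself, and the infinite cyclic groups $t\mz$ for $t>0$; closed cosets are points, all of $\mr$, and arithmetic progressions $a+t\mz$. Two such cosets with empty intersection: a point and a disjoint point, so again one coset is insufficient and two can be made disjoint. To see that two pairwise-intersecting cosets always meet — which they trivially do, being only two — and that three suffice in general, I need the upper bound. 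For the lower bound $\kappa(\mr)\geq 3$: take the three cosets $\mz$, $\sqrt2\,\mz$, and the single point $\{1\}$ is wrong; better to take $C_1=\mz$, $C_2=\frac12+\mz$, and a third coset realizing the same phenomenon as in the $\mz$ case pushed into $\mr$, e.g. reuse the $\mathbb{F}_2$-type obstruction: $2\mz$, $1+2\mz$ inside $\mz\subset\mr$ together with... these two already have empty intersection, so that only shows $\kappa\geq 2$. The genuine rank-2 obstruction: $C_0=\{x\mid 2x\equiv 1\bmod 2\}$ type sets. I would instead take the three cosets $2\mz$, $2\mz+1$, plus... no. The correct construction: inside $\mz$ the three cosets $4\mz$, $1+4\mz\cup$... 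Since $\kappa(\mz/4\mz)$ for $\mz/4\mz\cong\mz/4$ cyclic has Helly dimension $2$ by Lemma~\ref{lemma:p-group} ($d=1$), one gets only $\kappa\geq 2$ again; the rank-$2$ example lives in $\mz/2\times\mz/2$, i.e. one needs a non-cyclic quotient, which $\mz$ does not have but $\mz^2\subset\mr^2$ does. So for $\mr$ the lower bound $\kappa(\mr)\ge 3$ must come from a genuinely one-dimensional phenomenon: I would take $C_1=\mz$, $C_2=\sqrt2\,\mz$, $C_3=$ a point chosen so that $C_1\cap C_2=\{0\}\notni$ that point, while $C_2\cap C_3$ and $C_1\cap C_3$ are arranged nonempty — but a point meets $t\mz$ only if it lies in it, so pick the point to lie in neither; that gives empty pairwise intersections. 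The workable example: $C_1=\mr$ (all of it), $C_2=\mz$, $C_3=\frac12+\mz$; then $C_1\cap C_2=\mz\neq\emptyset$, $C_1\cap C_3\neq\emptyset$, $C_2\cap C_3=\emptyset$ — still only shows $\kappa\geq 2$. I expect the actual lower-bound example in the paper uses two incommensurable lattices: $C_1=\mz$, $C_2=\alpha+\mz$ for irrational small $\alpha$, $C_3=\beta\mz$ with $\beta$ chosen so that each pair meets (possible since $\mz+\beta\mz$ is dense, so $\mz\cap(\alpha+\beta\mz)$ etc.\ can be made nonempty by adjusting $\alpha,\beta$) yet all three have no common point — \emph{this incommensurability argument is the step I expect to be the main obstacle}, requiring a careful choice of $\alpha,\beta$ and a density/irrationality argument.

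For the upper bound $\kappa(\mr)\leq 3$, I would argue by the same dimension-reduction scheme as in Theorem~\ref{thm:alggphelly}: given closed cosets $C_1,\dots,C_m$ with every $3$ of them meeting, stratify by the subgroup types. If some $C_i$ is a point, reduce trivially. If all $C_i$ equal $\mr$, done. Otherwise each $C_i=a_i+t_i\mz$ with $t_i>0$ (or $=\mr$, which can be discarded as it contains everything); then the $C_i$ that are proper are cosets of discrete subgroups, and I would pass to the problem inside $\mr/(t_1\mz)\cong$ circle or work with the congruence conditions $x\equiv a_i\pmod{t_i}$ and apply a real analogue of CRT together with the $\kappa(\mz)=2$ result — specifically, intersecting $C_1$ with $C_2$ gives either $\emptyset$ (excluded, since $C_1,C_2,C_3$ meet) or a coset of $t_1\mz\cap t_2\mz$, which is again discrete; iterating and using that at each stage one drops into a sub-lattice, after at most one reduction step one is inside a copy of $\mz$ where $\kappa=2$ applies, yielding the bound $1+2=3$. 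I would then check the bookkeeping that the "+1" from the first reduction is legitimate — mirroring the "$e\le\dim(G)$" counting in Theorem~\ref{thm:alggphelly} with $\dim(\mr)=1$ — to land exactly on $3$.
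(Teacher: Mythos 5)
Your part (i) is correct: the pairwise-compatibility form of the Chinese Remainder Theorem (the system $x\equiv a_i \pmod{n_i}$ is solvable iff $a_i\equiv a_j\pmod{\gcd(n_i,n_j)}$ for all $i,j$) is exactly the content that the paper instead extracts by passing to the finite quotient $\mz/n\mz$ and quoting $\kappa(\mz/n\mz)=2$ from Corollary~\ref{cor:abelkappa}; the two routes are equivalent. Your upper bound in (ii) is also essentially the paper's argument: translate one coset to a closed subgroup $H$, observe that $H$ is $\{0\}$, $t\mz$ or $\mr$, and in the discrete case intersect the remaining cosets with $H\cong\mz$, where the triple-intersection hypothesis makes them pairwise intersecting so that (i) finishes the job.

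The genuine gap is the lower bound $\kappa(\mr)\geq 3$. You correctly sense that incommensurability is the mechanism, but none of your candidate configurations works, and the one you finally settle on fails at the first step: for irrational $\alpha$ one has $\mz\cap(\alpha+\mz)=\emptyset$, so your $C_1$ and $C_2$ already have empty pairwise intersection and the triple shows only $\kappa(\mr)\geq 2$. What is needed is three closed cosets whose pairwise intersections are all nonempty but are three \emph{distinct} single points (they are automatically singletons or empty when the underlying lattices are incommensurable, since the intersection of two cosets is a coset of the intersection of the subgroups, here the trivial group). The paper takes $C_1=\mz$, $C_2=\sqrt2\,\mz$, $C_3=1+(\sqrt2-1)\mz$: the relation $a+b\sqrt2=0$ with $a,b\in\mz$ forces $a=b=0$, which gives $C_1\cap C_2=\{0\}$, $C_1\cap C_3=\{1\}$, $C_2\cap C_3=\{\sqrt2\}$, and since these three singletons are pairwise different the total intersection is empty. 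No density argument is required, only this irrationality observation; without such an explicit example your proof of (ii) is incomplete.
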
 

\begin{proof} (i) Take a finite system of cosets $C_1,\ldots,C_m$ in $\mz$, and assume that any two of them have a common element. 
Denote by $H_1,\ldots,H_m$ the corresponding subgroups. 
If one of them, say $H_1$ is the trivial subgroup, then $C_1$ is a single element, and this element is contained in all the cosets by assumption. 
Assume now that none of the $H_i$ is trivial. Then their intersection is the 
subgroup $n\mz$ for some $n\in\mn$. Consider the natural surjection 
$\eta:\mz\to \mz/n\mz$ onto the cyclic group of order $n$. Since 
$H_i\supseteq \ker(\eta)$ for all $i$, we have $C_i=\eta^{-1}(\eta(C_i))$. 
Now $\eta(C_i)$, $i=1,\ldots,m$ are cosets in the finite cyclic group 
$\mz/n\mz$ such that any two have a common element. 
Since $\kappa(\mz/n\mz)=2$ by Corollary~\ref{cor:abelkappa}, we have 
that $\cap_{i=1}^m\eta(C_i)\neq\emptyset$, hence 
$\cap_{i=1}^mC_i\neq\emptyset$. 

(ii) Consider in the additive group of $\mr$ the following three closed cosets: 
$C_1=\mz$, $C_2=\{n\sqrt 2\mid n\in\mz\}$, $C_3=\{1+n(\sqrt 2-1)\mid n\in\mz\}$. 
Then we have $C_1\cap C_2=\{0\}$, $C_1\cap C_3=\{1\}$, and 
$C_2\cap C_3=\{\sqrt 2\}$. Therefore $\kappa(\mr)\geq 3$. 
For the reverse inequality, let $C_1,\ldots,C_m$ be a finite system 
of closed cosets in $\mr$, where $m\geq 3$. 
Translating the cosets by the opposite of an element of $C_m$ we may assume that 
$C_m$ is a subgroup of $\mr$. We may assume that $C_m\neq \mr$. 
Any closed non-trivial proper subgroup of $\mr$ is isomorphic to $\mz$. 
Now $C_1\cap C_m,\ldots,C_{m-1}\cap C_m$ are pairwise intersecting cosets in 
$C_m\cong\mz$, hence they have a common element by (i). 
\end{proof} 


\begin{center} {\bf Acknowledgements}\end{center} 

We thank L\'aszl\'o Pyber for stimulating discussions and references. 


 \end{document}